\newtheorem*{stahl}{Theorem S}
\newtheorem*{cond1}{Condition~GP}
\newtheorem{theorem}{Theorem}
\newtheorem{corollary}[theorem]{Corollary}
\newtheorem{proposition}[theorem]{Proposition}
\newtheorem{lemma}{Lemma}
\newtheorem{definition}{Definition}
\theoremstyle{remark}
\newtheorem{remark}{Remark}[theorem]
\numberwithin{equation}{section}
\newcommand{\D}     {\mathbb{D}}
\newcommand{\R}     {\mathbb{R}}
\newcommand{\C}     {\mathbb{C}}
\newcommand{\N}     {\mathbb{N}}
\newcommand{\Z}     {\mathbb{Z}}
\newcommand{\W}     {\mathcal{W}_\Delta}
\newcommand{\RS}        {\mathfrak{R}}
\newcommand{\map}   {\Phi}
\newcommand{\Arg}   {\mathrm{Arg}}
\newcommand{\Ai}        {\mathrm{Ai}}
\newcommand{\z} {\mathbf{z}}
\newcommand{\w} {\mathbf{w}}
\newcommand{\tr}    {\mathbf{t}}
\newcommand{\n}     {\mathbf{n}}
\newcommand{\mdp}        {\mathrm{mod~periods~}}
\newcommand{\cp}        {\mathrm{cp}}
\newcommand{\clos}  {\mathrm{clos}}
\newcommand{\const} {\mathrm{const.}}
\newcommand{\dist}  {\mathrm{dist}}
\newcommand{\diam}  {\mathrm{diam}}
\newcommand{\im}        {\mathrm{Im}}
\newcommand{\re}        {\mathrm{Re}}
\newcommand{\cws}   {\stackrel{*}{\to}}
\newcommand{\rhy}   {\textnormal{RHP}-$\mathcal{Y}$}
\newcommand{\rht}   {\textnormal{RHP}-$\mathcal{T}$}
\newcommand{\rhs}   {\textnormal{RHP}-$\mathcal{S}$}
\newcommand{\rhn}   {\textnormal{RHP}-$\mathcal{N}$}
\newcommand{\rhpa}   {\textnormal{RHP}-$\mathcal{P}_a$}
\newcommand{\rhpb}   {\textnormal{RHP}-$\mathcal{P}_b$}
\newcommand{\rhpsi} {\textnormal{RHP}-$\Psi$}
\newcommand{\rhups} {\textnormal{RHP}-$\Upsilon$}
\newcommand{\rhr}   {\textnormal{RHP}-$\mathcal{R}$}
\begin{document}

\title[Strong Asymptotics of Nuttall-Stahl Polynomials] {{P}ad\'e approximants for functions with branch points -- Strong Asymptotics of Nuttall-Stahl Polynomials}

\author[A.I. Aptekarev]{Alexander I. Aptekarev}

\address{Keldysh Institute of Applied Mathematics, Russian Academy of Science, Moscow, RF}

\email{aptekaa@keldysh.ru}

\author[M.L. Yattselev]{Maxim L. Yattselev}

\address{Corresponding author, Department of Mathematics, University of Oregon, Eugene, OR, 97403, USA}

\email{maximy@uoregon.edu}

\begin{abstract}
Let $f $ be a germ of an analytic function at infinity that can be analytically
continued along any path in the complex plane deprived of a finite set of
points, $f \in\mathcal{A}(\overline{\C} \setminus A), \quad \sharp A <
\infty $. J.~Nuttall has put forward the important relation between the
\textit{maximal domain}  of $f$ where the function has a
single-valued branch and the \emph{domain of convergence} of the diagonal Pad\'e
approximants for $f$. The Pad\'e approximants, which are rational functions and
thus single-valued, approximate a  holomorphic branch of $f$ in
the domain of their convergence. At the same time most of their poles tend to
the boundary of the domain of convergence and the support of their limiting
distribution models the system of cuts that makes the function $f$
single-valued. Nuttall has conjectured (and proved for many important special
cases) that this system of cuts has \emph{minimal logarithmic capacity} among
all other systems converting the function $f$ to a single-valued branch. Thus
the domain of convergence corresponds to the \textit{maximal} (in the sense of
\textit{minimal} boundary) domain of single-valued holomorphy for the analytic
function $f \in\mathcal{A}(\overline{\C} \setminus A)$. The complete
proof of Nuttall's conjecture (even in a more general setting where the set $A$
has logarithmic capacity $0$) was obtained by H.~Stahl. In this work, we derive
\textit{strong asymptotics} for the denominators of the diagonal Pad\'e approximants for
this problem in a rather general setting. We assume that $A$ is a finite set of
branch points of $f$ which have the \emph{algebro-logarithmic character} and
which are placed in a \emph{generic position}. The last restriction means that
we exclude from our consideration some degenerated ``constellations'' of the
branch points.

\end{abstract}

\subjclass[2000]{42C05, 41A20, 41A21}

\keywords{Pad\'e approximation, orthogonal polynomials, non-Hermitian orthogonality, strong asymptotics.}

\maketitle

\section{Introduction}

Let $f$ be a function holomorphic at infinity. Then $f$ can be represented as a power series
\begin{equation}
\label{f}
f(z) = \sum_{k=0}^\infty \frac{f_k}{z^k}.
\end{equation}
A \emph{diagonal Pad\'e approximant} to $f$ is a rational function $[n/n]_f=p_n/q_n$ of type $(n,n)$ (i.e., $\deg(p_n),\deg(q_n)\leq n$) that has maximal order of contact with $f$ at infinity \cite{Pade92,BakerGravesMorris}. It is obtained from the solutions of the linear system
\begin{equation}
\label{linsys}
R_n(z):= q_n(z)f(z)-p_n(z) = \mathcal{O}\left(1/z^{n+1}\right) \quad \mbox{as} \quad z\to\infty
\end{equation}
whose coefficients are the moments $f_k$ in \eqref{f}. System \eqref{linsys} is always solvable and no solution of it can be such that $q_n\equiv0$ (we may thus assume that $q_n$ is monic). In general, a solution is not unique, but yields exactly the same rational function $[n/n]_f$. Thus, each solution of \eqref{linsys} is of the form $(lp_n,lq_n)$, where $(p_n,q_n)$ is the unique solution of minimal degree. Hereafter, $(p_n,q_n)$ will always stand for this unique pair of polynomials.

Pad\'e approximant $[n/n]_f$ as well as the index $n$ are called \emph{normal} if $\deg(q_n)=n$ \cite[Sec.~2.3]{NikishinSorokin}. The occurrence of non-normal indices is a consequence of overinterpolation. That is, if $n$ is normal index and\footnote{We say that $a(z)\sim b(z)$ if $0<\liminf_{z\to\infty}|a(z)/b(z)|\leq\limsup_{z\to\infty}|a(z)/b(z)|<\infty$.}
\[
f(z)-[n/n]_f(z) \sim z^{-(2n+l+1)} \quad \mbox{as} \quad z\to\infty
\]
for some $l\geq0$, then $[n/n]_f=[n+j/n+j]_f$ for $j\in\{0,\ldots,l\}$, and $n+l+1$ is normal.

Assume now that the germ \eqref{f} is analytically continuable along any path in $\C\setminus A$ for some fixed set $A$. Suppose further that this continuation is multi-valued in $\overline\C\setminus A$, i.e., $f$ has branch-type singularities at some points in $A$. For brevity, we denote this by
\begin{equation}
\label{analytE}
 f \in \mathcal{A}(\overline\C\setminus A)\,.
\end{equation}
The  theory of Pad\'e approximants to functions with branch points has been initiated by J.~Nuttall. In the  pioneering paper \cite{Nut77}
 he considered a class of functions \eqref{analytE}  with an even number of
 branch points (forming the set $A$) and principal singularities of the
square root type. Convergence in \emph{logarithmic capacity}
\cite{Ransford,SaffTotik} of Pad\'e approximants, i.e.,
\begin{equation}
\label{convCAP} \forall\,\,\varepsilon\,>\,0\,, \qquad \lim_{n \to \infty}
\cp\left(\{z\in K :~|f(z)-[n/n]_f(z)|>\varepsilon\}\right) \,=\,0\,,
\end{equation}
was proven uniformly on compact subsets of $\overline\C \setminus\Delta$, where $\Delta$ is a system of arcs which is completely determined by the location of the branch points. Nuttall characterized this
system of arcs as a system that has minimal logarithmic capacity among all
other systems of cuts making the function $f$  single-valued in their complement. That is,
\begin{equation}
\label{minCAP}
\cp ( \Delta ) = \min_{\partial D: D \in \mathcal{D}_f}
\cp ( \partial D ) \,,
\end{equation}
where we denoted  by $\mathcal{D}_f$ the collection of all connected domains
 containing the point at infinity in which $f$ is holomorphic and single-valued.

 In that paper he
has  conjectured  that \textit{for any function $f$ in $\mathcal{A}(\overline\C\setminus A)$ with any
finite number of branch points that are arbitrarily positioned in the complex plane, i.e.,}
\begin{equation}
\label{Efinite}  \sharp \, A < \infty \quad \mbox{and} \quad A\subset\C,
\end{equation}
\textit{and with an arbitrary type of branching singularities at those points, the diagonal Pad\'e approximants converge to $f$ in logarithmic capacity away from the system of cuts $\Delta$ characterized by the property of minimal logarithmic capacity.}

Thus, Nuttall in his conjecture has put forward the important relation between
the \textit{maximal domain}   where the multi-valued function $f$ has
single-valued branch and the \emph{domain of convergence} of the diagonal Pad\'e
approximants to $f$ constructed solely based on the series representation
\eqref{f}. The Pad\'e approximants, which are rational functions and thus
single-valued, approximate a single-valued holomorphic branch of $f$ in the
domain of their convergence. At the same time most of their poles tend to the
boundary of the domain of convergence and the support of their limiting
distribution models the system of cuts that makes the function $f$
single-valued (see also \cite{Nut80}).

The complete proof of Nuttall's conjecture (even in a more general setting) was taken up by H.~Stahl. In a series of  fundamental papers \cite{St85,St85b,St86,St89,St97} for a multi-valued function $f \in \mathcal{A}(\overline\C\setminus A)$ with $\cp( A ) \,=\,0$ (no more restrictions!) he proved: the \emph{existence} of a domain $D^*\in \mathcal{D}_f$ such that the boundary $\Delta =\partial D^*$satisfies \eqref{minCAP};  \textit{weak ($n$-th root) asymptotics} for the denominators of the Pad\'e approximants \eqref{linsys}
\begin{equation}
\label{nthAs}
\lim_{n \to \infty} \frac{1}{n}\log |q_n(z)| = - V^{\omega_\Delta}(z),
\qquad z \in D^*,
\end{equation}
where $V^{\omega_\Delta}:=-\int \log |z-t|\, d\omega_\Delta(t)$ is the logarithmic potential of the equilibrium measure $\omega_\Delta$, minimizing the energy functional $I(\mu):=\int V^{\mu}(z)\, d\mu(z)$ among all probability measures $\mu$ on $\Delta$, i.e., $I(\omega_\Delta):= \min_{\mu(\Delta)=1}I(\mu)$; \emph{convergence theorem} \eqref{convCAP}.

The aim of the present paper is to established the strong  (or Szeg\H{o} type, see   \cite[Ch.~XII]{Szego}) asymptotics of \textit{the Nuttall-Stahl polynomials} $q_n$. In other words, to identify the limit
\[
\lim_{n \to \infty}\, \frac{q_n}{\map^n} \,= \,? \quad \mbox{in} \quad D^*,
\]
where the polynomials $q_n$ are the denominators of the diagonal Pad\'e approximants \eqref{linsys} to functions \eqref{analytE} satisfying \eqref{Efinite} and $\map$ is a properly chosen normalizing function. 

Interest in the strong asymptotics comes, for example, from the problem of
uniform convergence of the diagonal Pad\'e approximants. Indeed, the weak type of convergence such as the
convergence in capacity in Nuttall's conjecture and Stahl theorem is not a mere technical shortcoming. Indeed, even though most of the poles (full measure) of the approximants approach the system of the extremal cuts $\Delta$, a small number of them (measure zero) may cluster away from $\Delta$ and impede the uniform convergence. Such poles are
called \textit{spurious or wandering}. Clearly, controlling these
poles is the key for understanding the uniform convergence.

There are many  special cases of the Nuttall-Stahl polynomials that have been studied in detail including their strong asymptotics. Perhaps the most famous examples are the Pad\'e approximants to functions $1/\sqrt{z^2-1}$ and $\sqrt{z^2-1}-z$ (the simplest meromorphic functions on a two sheeted Riemann surface of genus zero) where the Nuttall-Stahl polynomials $q_n$ turn out to be the classical Chebysh\"ev polynomials of the first and second kind, respectively. The study of the diagonal Pad\'e approximants for functions meromorphic on certain Riemann surfaces of genus one by means of elliptic functions was initiated in the works of S.~Duma \cite{uDum} and N.I.~Akhiezer \cite{Akhiezer}, see also \cite{Nik76} by E.M.~Nikishin. Supporting his conjecture, Nuttall considered two important classes of functions with branch points for which he obtained strong asymptotics of the diagonal Pad\'e approximants. In a joint paper with S.R.~Singh \cite{NutS80}, a generalization of the class of functions considered in \cite{Nut77} (even number of quadratic type branch points) was studied. Peculiarity of this class as well as its prototype from
\cite{Nut77} is that $\Delta$, the system  of extremal cuts \eqref{minCAP},
consists of non-intersecting analytic arcs, see Figure~\ref{figure01}A.
\begin{figure}[!ht]
\centering
\subfloat[]{\includegraphics[scale=.5]{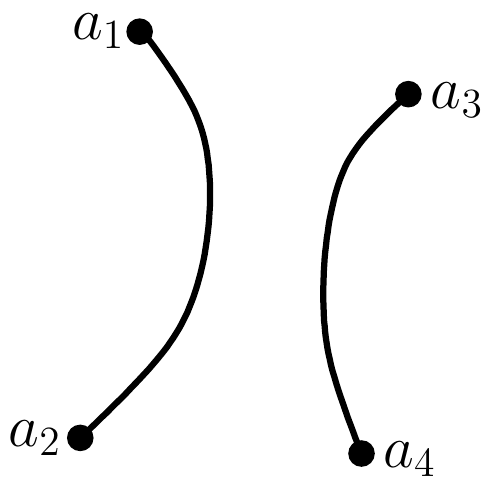}}\quad\quad\quad
\subfloat[]{\includegraphics[scale=.55]{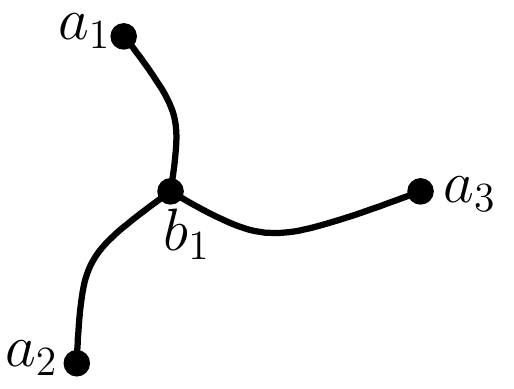}}
\caption{\small The left-hand figure depicts the case where $\Delta$ is comprised of non-intersecting analytic arcs, and the right-hand figure illustrates the case where three arcs share a common endpoint.}
\label{figure01}
\end{figure}

\noindent
In the paper \cite{Nut86} (see also \cite{Nut82}) Nuttall investigated the behavior of the Pad\'e approximants for functions
with three non-collinear branch points. Namely, functions of the form
\begin{equation}
\label{3p} f(z):= \prod_{j=1}^3 (z-e_j)^{\alpha_j}\,,
\qquad \alpha_j \in \mathbb{C}\,:\quad \sum_{j=1}^3
\alpha_j = 0\,.
\end{equation}
Analytic arcs of the system of extremal cuts for these functions (contrary to the functions from the previous
class) share a common endpoint, see Figure~\ref{figure01}B. In order to shed some light on the behavior of the spurious poles, Stahl studied strong asymptotics of the diagonal Pad\'e approximants for hyperelliptic functions \cite{St96}, see also \cite{St97}.

An important feature of the diagonal Pad\'e approximants, which plays a key
role in the study of their asymptotics, is the orthogonality of their denominators. It is quite simple to see that
\eqref{linsys} and the Cauchy theorem (taking into account the definition of
$\mathcal{D}_f$ in \eqref{minCAP}) lead to
\[
\int\limits_{\partial D: D \in \mathcal{D}_f} q_n(z)\,z^j\,f(z)\,dz\,=\,0\,,
\qquad j\in\{0,\ldots, n-1\},
\]
where the integral is taken along the orientated boundary $\partial D$ of a domain
$D$ from $\mathcal{D}_f$. For the extremal domain $D^*\in\mathcal{D}_f$ of $f$ satisfying \eqref{analytE} and
\eqref{Efinite},  the boundary $\Delta=\partial D^*$ consists of a finite union of
analytic Jordan arcs. Hence, choosing an orientation of $\Delta$
(as a set of Jordan arcs), we can introduce in general complex-valued weight function
\begin{equation}
\label{cw}
\rho(t) =  (f^+-f^-)(t), \qquad t\in \Delta,
\end{equation}
which turns $q_n$ into \textit{non-Hermitian orthogonal polynomials}. That is,
\begin{equation}
\label{ortho}
\int\limits_{\Delta} q_n(t)\,t^j\,\rho(t)\,dt\,=\,0,
\qquad j\in\{0,\ldots, n-1\}.
\end{equation}
Asymptotic analysis of the non-Hermitian orthogonal polynomials is a difficult problem substantially different from the study of the asymptotics of the polynomials orthogonal with respect to a \textit{Hermitian inner product}, i.e., the case where $\rho$ is real-valued and $\Delta \subset\R$.

In \cite{St86}, Stahl  developed a new method of study of the weak ($n$-th root)
asymptotics \eqref{nthAs} of the polynomials orthogonal with respect to
\textit{complex-valued weights}. As discussed above, this resulted in the proof of
Nuttall's conjecture. The method of Stahl was later extended by A.A.~Gonchar and E.A.~Rakhmanov
in \cite{GRakh87} to include the weak ($n$-th root)
asymptotics of the polynomials orthogonal with respect to \textit{varying
complex-valued weights}, i.e., to include the case where the weight function $\rho := \rho_n$ in \eqref{ortho} depends
on $n$, the degree of the polynomial $q_n$. Orthogonal polynomials with varying weights
play an important role in analysis of multipoint Pad\'e approximants, best
(Chebysh\"ev) rational approximants, see \cite{GL78, Gon78}, and in many
other applications (for example in description of the eigenvalue distribution of random
matrices \cite{Deift}).

The methods of obtaining strong asymptotics of the polynomials orthogonal
 with respect to a complex weight are based on a certain boundary-value
problem for analytic functions (Riemann--Hilbert problem). Namely,
\begin{equation}
\label{BC}
R_n^+-R_n^-= q_n\rho \quad \text{on}\quad \Delta,
\end{equation}
where $R_n$, defined in \eqref{linsys}, are the  \textit{reminder functions}  for Pad\'e approximants (or \textit{functions of the second kind} for polynomials \eqref{ortho}), which also can be expressed as
\begin{equation}
\label{secondkind}
R_n(z)=\int_\Delta\frac{q_n(t)\rho(t)}{t-z}\,\frac{dt}{2\pi i}, \qquad
R_n(z)=\mathcal{O}\biggl(\frac1{z^{n+1}}\biggr) \quad \text{as}\quad
n\to\infty.
\end{equation}
The boundary-value problem \eqref{BC} naturally follows from \eqref{linsys} and the Sokhotski\u\i--Plemelj formulae.
This approach appeared in the works of Nuttall in connection with the
study of the strong asymptotics of the Hermite--Pad\'e polynomials, see the review~\cite{Nut84}. In~\cite{Nut90} Nuttall transformed the boundary
condition~\eqref{BC} into a singular integral equation
and on this basis obtained the formulae of strong asymptotics for
polynomials~\eqref{ortho} orthogonal on the interval
$\Delta:= [-1,1] $ with respect to a holomorphic complex-valued weight
\[
\rho(x):=\frac{\widetilde \rho(x)}{\sqrt{1-x^2}} ,
\qquad
\widetilde \rho\in H(\Delta), \quad
\widetilde \rho\ne0 \enskip \text{on}\quad \Delta:= [-1,1],
\]
where $H(\Delta)$ is a class of functions holomorphic in some neighborhood of $\Delta$. Here~$\widetilde \rho$ can also be
a complex-valued non-vanishing Dini-continuous function on $[-1,1]$ \cite{BY09c}. The most general known extension of this class of  orthogonal polynomials is due to S.P.~Suetin \cite{Suet00,Suet03} who considered the convergence domain $D^*\in \mathcal{D}_f$ for the function $1/\sqrt{(t-e_1)\cdots(t-e_{2g+2})}$, $e_j\in\C$, when the boundary $\Delta = \partial D^*$ consists of $g+1$ disjoint Jordan arcs (like in \cite{NutS80}, see Figure~\ref{figure01}A). Elaborating on the singular integral method of  Nuttall, he derived strong asymptotics for polynomials~\eqref{ortho} orthogonal on $\Delta$ with respect to the complex
weight
\[
\rho(x):=\frac{\widetilde \rho(x)}{\sqrt{(t-e_1)\cdots(t-e_{2g+2})}} ,
\]
where $\widetilde\rho$ is a H\"older continuous and non-vanishing function on $\Delta$. In \cite{uBY2}, L.~Baratchart and the second author have studied strong asymptotics for polynomials \eqref{ortho} via the singular integral method in the \emph{elliptic} case $g=1$, but under the assumptions that $\rho$ is Dini-continuous and non-vanishing on $\Delta$, while the latter is connected and consists of three arcs that meet at one of the branch points (exactly the same set up as in \cite{Nut86}, see Figure~\ref{figure01}B). The strong asymptotics of Nuttall-Stahl polynomials arising from the function~\eqref{3p} was derived in the recent work \cite{M-FRakhSuet12} in three different ways, including singular integral equation method of Nuttall and the matrix Riemann-Hilbert method.

The latter approach facilitated substantial progress in proving new results for the strong asymptotics of orthogonal polynomials and is based on a matrix-valued Riemann-Hilbert boundary value problem. The core of the method lies in formulating  a Riemann-Hilbert problem for $2\times2$ matrices (due to Fokas, Its, and Kitaev \cite{FIK91,FIK92}) whose entries are orthogonal polynomials (\ref{ortho}) and functions of the second kind (\ref{secondkind}) to which the steepest descent analysis (due to Deift and Zhou \cite{DZ93}) is applied as $n\to \infty$. This method was initially designed to study the asymptotics of the integrable PDEs and was later applied to prove asymptotic results for polynomials orthogonal on the real axis with respect to real-valued analytic weights, including varying weights (depending on $n$) \cite{DKMLVZ99a,DKMLVZ99b,KMcL99,KMcLVAV04} and related questions from random matrix theory. It also has been noticed \cite{BaikDMLMilZ01,Ap02,KamvissisMcLaughlinMiller,BY10} (see also recent paper \cite{BerMo09}) that the method works for the non-Hermitian orthogonality in the complex plane with respect to complex-valued weights.

In the present paper we apply the matrix Riemann-Hilbert method to obtain
strong asymptotics of Pad\'e approximants for functions with branch points (i.e., we obtain strong asymptotics of Nuttall-Stahl polynomials). To capture the geometry of multi-connected domains we use the Riemann theta functions as it was done in \cite{DKMLVZ99a}, but keep our presentation in the spirit of \cite{Ap08,ApLy10}.

This paper is structured as follows. In the next section we introduce necessary notation and state our main result. In Sections~\ref{s:ED} and \ref{S:RS} we describe in greater detail the geometry of the problem. Namely, Section~\ref{s:ED} is devoted to the existence and properties of the extremal domain $D^*$ for the functions of the form \eqref{analytE} and \eqref{Efinite}. Here, for
completeness of the presentation, we present some results and their proofs from the unpublished manuscript \cite{uPerevRakh}. Section~\ref{S:RS} is designed to highlight main properties of the Riemann surface of the derivative of the complex Green function
of the extremal domain $D^*$. Sections~\ref{s:bvp} and \ref{s:s} are devoted to a solution of a certain boundary value problem on $\RS$ and are auxiliary to our main results. In the last three sections we carry out the matrix Riemann-Hilbert analysis. In Section~\ref{s:5} we state the corresponding matrix Riemann-Hilbert problem, renormalize it, and perform some identical transformations that simplify the forthcoming analysis. In Section~\ref{s:6} we deduce the asymptotic  (as $n \rightarrow \infty$) solution of the initial Riemann-Hilbert problem and finally in Section~\ref{s:7} we derive the strong asymptotics
 of Nuttall-Stahl polynomials.

\bigskip

\section{Main Results}
\label{sec:main}

The main objective of this work is to describe the asymptotics of the diagonal Pad\'e approximants to algebraic functions. We restrict our attention to those functions that have finitely many branch points, all of which are of an integrable order, with no poles and whose contour of minimal capacity satisfies some generic conditions, Section~\ref{ss:AF}. Such functions can be written as Cauchy integrals of their jumps across the corresponding minimal capacity contours and therefore we enlarge the considered class of functions to Cauchy integrals of densities that behave like the non-vanishing jumps of algebraic functions, Section~\ref{ss:OP}. It turns out that the asymptotics of Pad\'e approximants is described by solutions of a specific boundary value problem on a Riemann surface corresponding to the minimal capacity contour. This surface and its connection to the contour are described in Section~\ref{ss:RS}, while the boundary value problem as well as its solution are stated in Section~\ref{ss:auxBVP}. The main results of this paper are presented in Section~\ref{ss:MTh}.

\subsection{Functions with Branch Points}
\label{ss:AF}

Let $f$ be a function holomorphic at infinity that extends analytically, but in a multi-valued fashion, along any path in the extended complex plan that omits finite number of points. That is,
\begin{equation}
\label{pointsa}
 f \in \mathcal{A}(\overline\C\setminus A)\,,\qquad A\,:=\, \{a_k\},
 \quad 2\leq \sharp A < \infty.
\end{equation}
Without loss of generality we may assume that  $f(\infty)=0$ since subtracting a constant from $f$ changes the Pad\'e approximant in a trivial manner. 

We impose two general restrictions on the functions \eqref{pointsa}. The first restriction is related to the character of singularities at the branch points. Namely, we assume that the branch points are \emph{algebro-logarithmic}. It means that in a small enough neighborhood of each $a_k$ the function $f$ has a representation
\begin{equation}
\label{AlgLog}
 f(z)\,=\, h_{1}(z)\psi(z) + h_{2}(z), \quad \psi(z) = \left\{\begin{array}{ll}(z-a_k)^{\alpha(a_k)}\\\log(z-a_k)\end{array}\right.,
  \end{equation}
where $-1<\alpha(a_k)<0$ and $h_1,h_2$ are holomorphic around $a_k$. The second restriction is related to the disposition of the branch points. Denote by $D^*$ the extremal domain for $f$ in the sense of Stahl. It is known, Proposition~\ref{prop:structure}, that
\begin{equation}
\label{Delta}
\Delta = \overline\C\setminus D^* = E\cup\bigcup \Delta_k,
\end{equation}
where $\bigcup \Delta_k$ is a finite union of open analytic Jordan arcs and $E$ is a finite set of points such that each element of $E$ is an endpoint for at least one arc $\Delta_k$, Figure~\ref{fig:0}.
\begin{figure}[!ht]
\centering
\includegraphics[scale=.6]{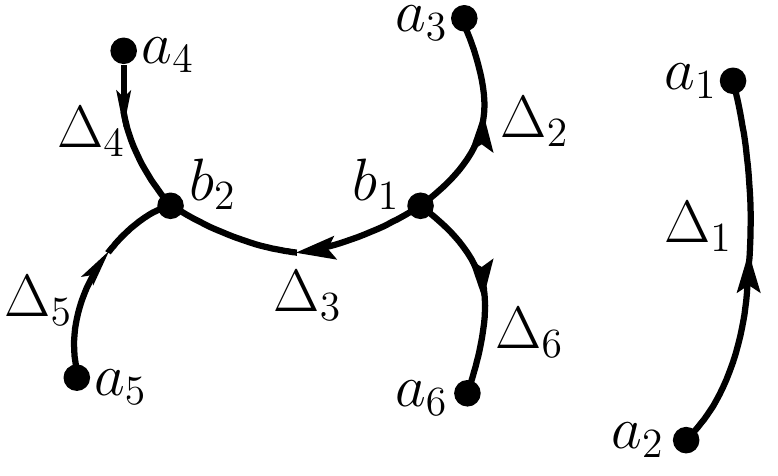}
\caption{\small Schematic example of $\Delta$, depicting the set $E=\{a_1,\ldots,a_6\}\cup\{b_1,b_2\}$, the arcs $\Delta_k$, and their orientation.}
\label{fig:0}
\end{figure}
In what follows, we suppose that the points forming $A$ are in a \emph{Generic Position} (GP).
\begin{cond1}
\label{cond1}
We assume that
\begin{itemize}
\item[(i)] each point in $E\bigcap A$ is incident with exactly one arc from $\bigcup\Delta_k$;
\item[(ii)] each point in $E\setminus A$ is incident with exactly three arcs from $\bigcup\Delta_k$.
\end{itemize}
\end{cond1}
The above condition describes a generic case for  the set $A$. Meaning that if the set $A$ does not satisfy this condition, then there is a small perturbation of the position of these points such that new set $A$ obeys Condition~\hyperref[cond1]{GP}.

Denote by $g_\Delta$ the \emph{Green function} for $D^*$ with a pole at infinity, Section~\ref{s:ED}. That is, $g_\Delta$ is the unique function harmonic in $D^*\setminus\{\infty\}$ having zero boundary values on $\Delta$ that diverges to infinity like $\log|z|$ as $|z|\to\infty$. It is known \cite[Thm.~5.2.1]{Ransford} that the logarithm of the logarithmic capacity of $\Delta$ is equal to
\[
\log\cp(\Delta) = \lim_{z\to\infty}\big(\log|z|-g_\Delta(z)\big).
\]
As shown in \cite{uPerevRakh}, see also \eqref{greendelta} further below, it holds that
\begin{equation}
\label{functionh}
h(z) := (2\partial_zg_\Delta)(z) = \frac1z + \cdots = \sqrt\frac{B(z)}{A(z)},
\end{equation}
where $2\partial_z:=\partial_x-i\partial_y$, $B$ is a monic polynomial of degree $\deg(A)-2$, and
\[
A(z) := \prod_{k=1}^p(z-a_k), \quad \{a_1, \ldots, a_p\}:= A \cap E.
\]
Since $g_\Delta\equiv0$ on $\Delta$, so is its tangential derivative at each smooth point of $\Delta^+\cup\Delta^-$. Hence, $h(t)\tau_t$, $t\in\Delta^\pm$,  is purely imaginary, where $\tau_t$ is the complex number corresponding to the tangent vector at $t$ to $\Delta$. In particular, the integral of $h(t)dt$ along $\Delta^\pm$ is purely imaginary.

Condition~\hyperref[cond1]{GP} has the following implications on $B$ \cite[Section~8]{Pommerenke2}. Let $m$ be the number of the connected components of $\Delta$. Then $B$ has $p-2m$ simple zeros that we denote by $b_1,\ldots,b_{p-2m}$ and all the other zeros are of even multiplicities. In particular,
\begin{equation}
\label{setE}
E:=\{a_1,\ldots,a_p\}\cup\{b_1,\ldots,b_{p-2m}\}.
\end{equation}
If we set $g:=p-m-1$, then $|E|=2g+2$. Moreover, we can write
\begin{equation}
\label{pointsb}
B(z) = \prod_{j=1}^{p-2m}(z-b_j)\prod_{j=p-2m+1}^g(z-b_j)^2
\end{equation}
where the elements of $\{b_{p-2m+1},\ldots,b_g\}$ are the zeros of $B$ of even multiplicities $m_{b_j}$, each listed $m_{b_j}/2$ times.

\subsection{Cauchy-type Integrals}
\label{ss:OP}

Let $f$ be a function of the form \eqref{pointsa}--\eqref{AlgLog} with contour $\Delta$ in \eqref{Delta} satisfying Condition~\hyperref[cond1]{GP}.  We orient the arcs $\Delta_k$ comprising $\Delta$ so that the arcs sharing a common endpoint either are all oriented
towards this endpoint or away from it, see Figure~\ref{fig:0}. As the complement of $\Delta$ is connected, i.e., $\Delta$ forms no loop, such an orientation is always feasible and, in fact, there are only two such choices which are inverse to each other. According to the chosen orientation we distinguish the left ($+$) and the right ($-$) sides of each arc. Then
\begin{equation}
\label{fInt}
f(z) = \int_\Delta\frac{(f^+-f^-)(t)}{t-z}\frac{dt}{2\pi i}, \quad z\in D^*,
\end{equation}
where the integration on $\Delta$ is taking place according to the chosen orientation.

Let $e\in E\setminus A$. Then $e$ is incident with exactly three arcs, which we denote for convenience by $\Delta_{e,j}$, $j\in\{1,2,3\}$. Since $e$ is not a point of branching for $f$, the jumps $(f^+-f^-)_{|\Delta_{e,k}}$ are holomorphic around $e$ and enjoy the property
\begin{equation}
\label{fSum}
(f^+-f^-)_{|\Delta_{e,1}}+(f^+-f^-)_{|\Delta_{e,2}}+(f^+-f^-)_{|\Delta_{e,3}}\equiv 0,
\end{equation}
where we also used the fact that the arcs $\Delta_{e,j}$ have similar orientation as viewed from $e$.

Set $\alpha(e):=0$ for each $e\in E\setminus A$ and fix $a,b\in E$ that are adjacent to each other by an arc $\Delta_k\subset\Delta$. Then the jump of $f$ across $\Delta_k$ can be written as
\begin{equation}
\label{fJump}
(f^+-f^-)(z) = w_k(f;z)(z-a)^{\alpha(a)}(z-b)^{\alpha(b)}, \quad z\in \Delta_k,
\end{equation}
where  we fix branches of $(z-a)^{\alpha(a)}$ and $(z-b)^{\alpha(b)}$ that are holomorphic across~$\Delta_k$ and $w_k(f;\cdot)$ is a holomorphic and non-vanishing function in some neighborhood of $\Delta_k$.

Keeping in mind \eqref{fSum} and \eqref{fJump}, we introduce the following class of weights on $\Delta$. 
\begin{definition}
A weight function $\rho$ on $\Delta$ belongs to the class $\W$ if 
\begin{equation}
\label{rhoSum}
\rho_{|\Delta_{e,1}}+\rho_{|\Delta_{e,2}}+\rho_{|\Delta_{e,3}} \equiv 0
\end{equation}
in a neighborhood of each $e\in E\setminus A$, where $\Delta_{e,j}$, $j\in\{1,2,3\}$, are the arcs incident with~$e$; and
\begin{equation}
\label{rho}
\rho_{|\Delta_k}(z) = w_k(z)(z-a)^{\alpha_a}(z-b)^{\alpha_b}
\end{equation}
on $\Delta_k$, incident with $a,b$, where $w_k$ is holomorphic and non-vanishing in some neighborhood of $\Delta_k$ and $\{(z-e)^{\alpha_e}\}_{e\in E}$ is a collection of functions holomorphic in some neighborhood of  $\Delta\setminus\{e\}$, $\alpha_e>-1$ and $\alpha_e=0$ for $e\in E\setminus A$.
\end{definition}

For a weight $\rho\in\W$, we set
\begin{equation}
\label{hatrho}
\widehat\rho(z) := \int_\Delta\frac{\rho(t)}{t-z}\frac{dt}{2\pi i}, \quad z\in D^*.
\end{equation}
It follows from \eqref{fInt}--\eqref{fJump} that a function $f$ satisfying \eqref{pointsa}--\eqref{AlgLog} and Condition~\hyperref[cond1]{GP}, and whose jump $f^+-f^-$ is non-vanishing on $\Delta\setminus\{a_1,\ldots,a_p\}$ can be written in the form \eqref{hatrho} for a $\W$-weight.

\subsection{Riemann Surface}
\label{ss:RS}

Denote by $\RS$ the Riemann surface of $h$ defined in \eqref{functionh}. We represent $\RS$ as a two-sheeted ramified cover of $\overline\C$ constructed in the following manner. Two copies of $\overline\C$ are cut along every arc $\Delta_k$. These copies are joined at each point of $E$ and along the cuts in such a manner that the right (resp. left) side of the arc $\Delta_k$ belonging to the first copy, say $\RS^{(0)}$, is joined with the left (resp. right) side of the same arc $\Delta_k$ only belonging to the second copy, $\RS^{(1)}$. It can be readily verified that $\RS$ is a hyperelliptic Riemann surface of genus $g$.

According to our construction, each arc $\Delta_k$ together with its endpoints corresponds to a cycle, say $L_k$, on $\RS$. We set $L:=\bigcup_kL_k$, denote by $\pi$ the canonical projection $\pi:\RS\to\overline\C$, and define
\[
D^{(k)}:=\RS^{(k)}\cap \pi^{-1}(D^*) \quad \mbox{and} \quad z^{(k)}:=D^{(k)}\cap\pi^{-1}(z)
\]
for $k\in\{0,1\}$ and $z\in D^*$. We orient each $L_k$ in such a manner that $D^{(0)}$ remains on the left when the cycle is traversed in the positive direction. For future use, we set $\{b_j^{(1)}\}_{j=1}^g$ to be such point on $\RS$ that
\begin{equation}
\label{bj1}
\pi\left(b_j^{(1)}\right)=b_j \quad \mbox{and} \quad b_j^{(1)}\in D^{(1)}\cup L , \quad j\in\{1,\ldots,g\}.
\end{equation}

For any (sectionally) meromorphic function $r$ on $\RS$ we keep denoting by $r$ the pull-back function from $\RS^{(0)}$ onto $\overline\C$ and we denote by $r^*$ the pull-back function from $\RS^{(1)}$ onto $\overline\C$. We also consider any function on $\overline\C$ naturally defined on $\RS^{(0)}$. In particular, $h$ is a rational function over $\RS$ such that $h^*=-h$ (as usual, a function is rational over $\RS$ if the only singularities of this function on $\RS$ are polar).

Denote by $\left\{\mathbf{a}_k\right\}_{k=1}^g$ and $\left\{\mathbf{b}_k\right\}_{k=1}^g$ the following homology basis for $\RS$. Let $C_j(\Delta)$, $j\in\{1,\ldots,m\}$, be the connected components of $\Delta$. Set $p_j:=|\{a_1,\ldots,a_p\}\cap C_j(\Delta)|$. Clearly, $p=\sum_{j=1}^mp_j$. Relabel, if necessary, the points $\{a_1,\ldots,a_p\}$ in such a manner that $\{a_{p_0+\cdots+p_{j-1}+1},\ldots,a_{p_0+\cdots+p_j}\}\subset C_j(\Delta)$, where $p_0:=0$. Then for each $j\in\{2,\ldots,m\}$ we choose $p_j-1$ $\mathbf{b}$-cycles as those cycles $L_k$ that contain the points $a_{p_0+\cdots+p_{j-1}+2},\ldots,a_{p_0+\cdots+p_j}$, and we choose $p_1-2$ $\mathbf{b}$-cycle as those cycles $L_k$ that contain $a_3,\ldots,a_{p_1}$.
\begin{figure}[!ht]
\centering
\includegraphics[scale=.6]{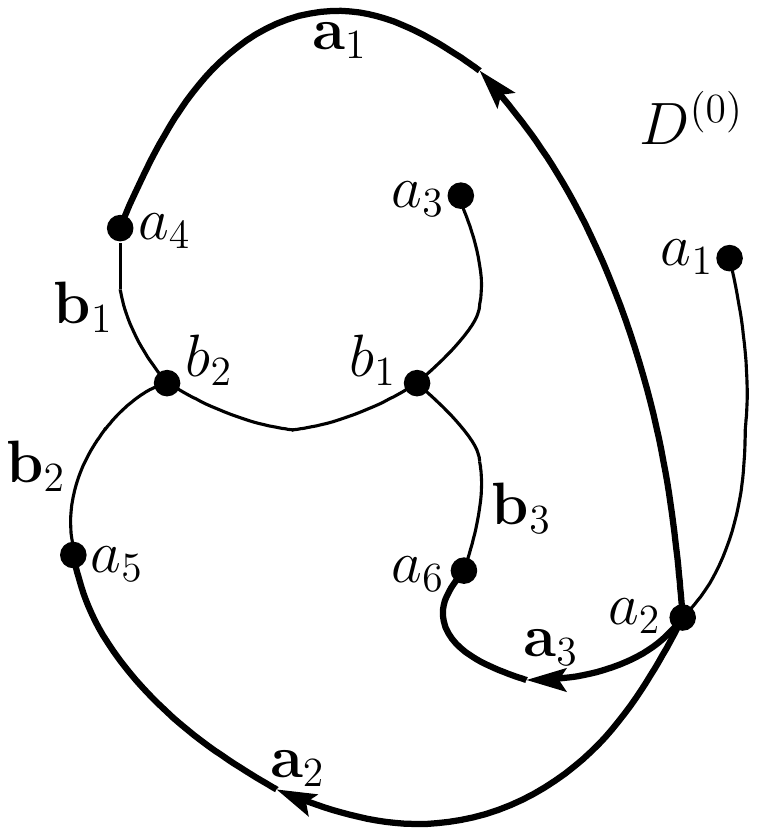}
\caption{\small The choice of the $\mathbf{b}$-cycles and the parts of the $\mathbf{a}$-cycles belonging to $D^{(0)}$ (thicker lines necessarily oriented towards the corresponding $\mathbf{b}$-cycles).}
\label{fig:1}
\end{figure}
We assume that the orientation of the $\mathbf{b}$-cycles is induced by the orientation of the corresponding cycles $L_k$. The $\mathbf{a}$-cycles are chosen to be mutually disjoint except at $a_2$, which belongs to all of them. It is assumed that each cycle $\mathbf{a}_k$ intersect the corresponding cycle $\mathbf{b}_k$ only at one point, the one that belongs to $\{a_3,\ldots,a_p\}$, and that
\[
\Delta_k^a:=\pi(\mathbf{a}_k\cap D^{(0)})=\pi(\mathbf{a}_k\cap D^{(1)}).
\]
The $\mathbf{a}$-cycles are orientated in such a manner that the tangent vectors to $\mathbf{a}_k,\mathbf{b}_k$ form the right pair at the point of their intersection. We also assume that each arc $\Delta_k^a$ naturally inherits the orientation of $\mathbf{a}_k\cap D^{(0)}$. In particular, the $+$ side of $\mathbf{a}_k\cap D^{(0)}$ and the $-$ side of $\mathbf{a}_k\cap D^{(1)}$ project onto the $+$ side of $\Delta_k^a$, see Figure~\ref{fig:1}. We set
\[
\widetilde\RS:=\RS\setminus\bigcup_{k=1}^g(\mathbf{a}_k\cup\mathbf{b}_k) \quad \mbox{and} \quad \widehat\RS:=\RS\setminus\bigcup_{k=1}^g\mathbf{a}_k
\]
(observe that $\widetilde\RS$ is a simply connected subdomain of $\RS$).

Define (see also Section~\ref{ss:gd})
\begin{equation}
\label{map}
\map(\z) := \exp\left\{\int_{a_1}^\z h(t)dt\right\} \quad \mbox{for} \quad \z\in\widetilde\RS.
\end{equation}
Then $\map$ is a holomorphic and non-vanishing function on $\widetilde\RS$ except for a simple pole at $\infty^{(0)}$ and a simple zero at $\infty^{(1)}$ whose pull-back functions are reciprocals of each other, i.e.,
\begin{equation}
\label{Da}
\map\map^* \equiv 1 \quad \mbox{in} \quad D_a := D^*\setminus\bigcup_{k=1}^g \Delta_k^a.
\end{equation}
Furthermore, $\map$ possesses continuous traces on both sides of each $\mathbf{a}$- and $\mathbf{b}$-cycle that satisfy
\begin{equation}
\label{jumpPhiperoids}
\frac{\map^+}{\map^-} = \left\{
\begin{array}{ll}
\exp\big\{2\pi i\omega_k\big\} & \mbox{on} \quad \mathbf{a}_k, \smallskip \\
\exp\big\{2\pi i\tau_k\big\} & \mbox{on} \quad \mathbf{b}_k,
\end{array}
\right.
\end{equation}
where the constants $\omega_k$ and $\tau_k$ are real and can be expressed as
\begin{equation}
\label{GreenPeriods}
\omega_k :=  -\frac{1}{2\pi i}\oint_{\mathbf{b}_k}h(t)dt \quad \mbox{and} \quad \tau_k:=\frac{1}{2\pi i}\oint_{\mathbf{a}_k}h(t)dt,
\end{equation}
$k\in\{1,\ldots,g\}$. In fact, it holds that $\omega_k=\omega_\Delta(\pi(L_k))$, where $\omega_\Delta$ is the \emph{equilibrium measure} of $\Delta$ \cite{Ransford}. Moreover, it is true that
\begin{equation}
\label{a11}
\map(z) = \frac{z}{\xi\cp(\Delta)} + \mathcal{O}(1) \quad \mbox{as} \quad z\to\infty, \quad |\xi|=1.
\end{equation}

In what follows, we shall assume without loss of generality that $\xi=1$. Indeed, if $\xi\neq1$, set $\Delta_\xi:=\{\bar\xi z:z\in\Delta\}$ and $\rho_\xi(z):=\rho(\xi z)$, $z\in\Delta_\xi$, where $\rho$ is a function defined in \eqref{rho}. Then
\[
\widehat\rho_\xi(z):=\widehat\rho(\xi z) \quad \mbox{and} \quad [n/n]_{\widehat\rho_\xi}(z)=[n/n]_{\widehat\rho}(\xi z), \quad z\in \overline\C\setminus\Delta_\xi.
\]
Thus, the asymptotic behavior of $[n/n]_{\widehat\rho}$ is entirely determined by the asymptotic behavior of $[n/n]_{\widehat\rho_\xi}$. Moreover, it holds that $\map_{\Delta_\xi}(z)=\map_\Delta(\xi z)$ and therefore $\map_{\Delta_\xi}(z)=z/\cp(\Delta)+\mathcal{O}(1)$. That is, we always can rotate the initial set up of the problem so that \eqref{a11} holds with $\xi=1$ without altering the asymptotic behavior.

Recall that a Riemann surface of genus $g$ has exactly $g$ linearly independent holomorphic differentials (see Section~\ref{ss:intro}).  We denote by
\[
d\vec\Omega:=\left(d\Omega_1,\ldots,d\Omega_g\right)^T
\]
the column vector of $g$ linearly independent holomorphic differentials normalized so that
\begin{equation}
\label{periodsa}
\oint_{\mathbf{a}_k}d\vec\Omega = \vec e_k \quad \mbox{for each} \quad k\in\{1,\ldots,g\},
\end{equation}
where $\left\{\vec e_k\right\}_{k=1}^g$ is the standard basis for $\R^g$ and $\vec e^T$ is the transpose of $\vec e$. Further, we set
\begin{equation}
\label{periodsb}
\mathcal{B}_\Omega := \left[ \oint_{\mathbf{b}_j}d\Omega_k\right]_{j,k=1}^g.
\end{equation}
It is known that $\mathcal{B}_\Omega$ is symmetric and has positive definite imaginary part.

\subsection{Auxiliary Boundary Value Problem}
\label{ss:auxBVP}

Let $\rho\in\W$. Define
\begin{equation}
\label{vectors}
\left\{
\begin{array}{lll}
\vec\omega &:=& \big(\omega_1,\ldots,\omega_g\big)^T, \smallskip \\
\vec\tau &:=& \big(\tau_1,\ldots,\tau_g\big)^T, \smallskip \\
\vec c_\rho &:=& \frac{1}{2\pi i}\oint_L\log(\rho/h^+) d\vec\Omega
\end{array}
\right.
\end{equation}
for some fixed determination of $\log(\rho/h^+)$ continuous on $\Delta\setminus E$, where the constants $\omega_j$ and $\tau_j$ were defined in \eqref{GreenPeriods} and we understand that $\log(\rho/h^+)$ on $L$ is the lift $\log(\rho/h^+)\circ\pi$. 

Further, let $\{\tr_j\}$ be an arbitrary finite collection of points on $\RS$. An integral divisor corresponding to this collection is defined as a formal symbol $\sum\tr_j$. We call a divisor $\sum\tr_j$ \emph{special} if it contains at least one pair of involution-symmetric points; that is, if there exist $\tr_j\neq\tr_k$ such that $\pi(\tr_j)=\pi(\tr_k)$ or multiple copies of points from $E$ (with a slight abuse of notation, we keep using $E$ for $\pi^{-1}(E)$).

Given constants \eqref{vectors} and points \eqref{bj1}, there exist divisors $\sum_{j=1}^g\tr_{n,j}$, see Sections~\ref{ss:jip} and~\ref{ss:ni} further below, such that
\begin{equation}
\label{main-jip}
\sum_{j=1}^g\int_{b_j^{(1)}}^{\tr_{n,j}}d\vec\Omega \, \equiv \, \vec c_\rho + n\big(\vec\omega+\mathcal{B}_\Omega\vec\tau\big) \quad \left(\mdp d\vec\Omega\right),
\end{equation}
where the path of integration belongs $\widetilde\RS$ (for definiteness, we shall consider each endpoint of integration belonging to the boundary of $\widetilde\RS$ as a point on the positive side of the corresponding $\mathbf{a}$- or $\mathbf{b}$-cycle) and the equivalence of two vectors $\vec c,\vec e\in\C^g$ is defined by $\vec c \equiv \vec e$ $\left(\mdp d\vec\Omega\right)$ if and only if $\vec c - \vec e = \vec j + \mathcal{B}_\Omega\vec m$ for some $\vec j,\vec m\in\Z^g$. 

\begin{proposition}
\label{prop:ni}
Solutions of \eqref{main-jip} are either unique or special. If \eqref{main-jip} is not uniquely solvable for some index $n$, then all the solutions for this index assume the form
\[
\sum_{j=1}^{g-2k}\tr_j+\sum_{j=1}^k\left(z_j^{(0)}+z_j^{(1)}\right),
\]
where the divisor $\sum_{j=1}^{g-2k}\tr_j$ is fixed and non-special and $\{z_j\}_{j=1}^k$ are arbitrary points in $\overline\C$.

If for some index $n$ a divisor solving \eqref{main-jip} has the form
\[
\sum_{i=1}^{g-l}\tr_i+k\infty^{(0)}+(l-k)\infty^{(1)}
\]
with $l>0$, $k\in\{0,\ldots,l\}$, and non-special $\sum_{i=1}^{g-l}\tr_i$ such that $\big|\pi(\tr_i)\big|<\infty$, then
\[
\sum_{i=1}^{g-l}\tr_i+(k+j)\infty^{(0)}+(l-k-j)\infty^{(1)}
\]
solves \eqref{main-jip} for the index $n+j$ for each $j\in\{-k,\ldots,l-k\}$. In particular, \eqref{main-jip} is uniquely solvable for the indices $n-k$ and $n+l-k$.

If $\sum_{j=1}^g\tr_{n,j}$ uniquely solves \eqref{main-jip} and does not contain $\infty^{(k)}$, $k\in\{0,1\}$, then \eqref{main-jip} is uniquely solvable for the index $n-(-1)^k$ and $\left\{\tr_{n,j}\right\}_{j=1}^g\cap\left\{\tr_{n-(-1)^k,j}\right\}_{j=1}^g=\varnothing$.
\end{proposition}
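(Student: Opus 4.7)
The plan is to read \eqref{main-jip} as a classical Jacobi inversion problem on the hyperelliptic Riemann surface $\RS$ and to combine the Abel--Jacobi correspondence with the hyperelliptic structure plus an explicit identification of the shift vector $\vec\omega+\mathcal{B}_\Omega\vec\tau$. The first step, which underlies everything else, is to establish the reciprocity
\[
\int_{\infty^{(1)}}^{\infty^{(0)}} d\vec\Omega \,\equiv\, \vec\omega+\mathcal{B}_\Omega\vec\tau \quad (\mdp d\vec\Omega).
\]
This I would obtain from the bilinear reciprocity relation on $\RS$ applied to the normalized holomorphic differentials $d\Omega_k$ together with $d\log\map$, which is a differential of the third kind with simple poles of residues $\mp 1$ at $\infty^{(0)},\infty^{(1)}$ and whose $\mathbf{a}$- and $\mathbf{b}$-periods are determined by \eqref{jumpPhiperoids}--\eqref{GreenPeriods}. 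Consequently, changing $n\mapsto n+j$ on the right-hand side of \eqref{main-jip} is the same as adding the Abel image of $j(\infty^{(0)}-\infty^{(1)})$; existence of a solution divisor $\sum\tr_{n,j}$ is then the classical Jacobi inversion theorem, and by Abel's theorem the fiber of the Abel map at the target of \eqref{main-jip} is precisely the complete linear system $|D_n|$ of any solution~$D_n$.

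To obtain the first paragraph of the proposition, I would then invoke Riemann--Roch: for $\deg D=g$ one has $\dim|D|=h^0(K-D)$, so non-uniqueness of the solution is equivalent to $D$ being special. On the hyperelliptic surface $\RS$ the canonical class is $(g-1)g^1_2$, where the hyperelliptic pencil is generated by the divisors $\tr+\sigma(\tr)$ and $\sigma$ denotes the hyperelliptic involution. A Clifford-type argument then yields the standard hyperelliptic description: an effective degree-$g$ divisor satisfies $h^0(D)=k+1$ precisely when $D=D_0+\sum_{j=1}^k\bigl(\tr_j+\sigma(\tr_j)\bigr)$ with a fixed non-special $D_0$ of degree $g-2k$ and freely varying $\tr_j$. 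Because each $e\in E$ is a fixed point of $\sigma$, a double copy of such a point is itself an involution-symmetric pair, which matches the paper's definition of a special divisor and produces the asserted form $\sum_{j=1}^{g-2k}\tr_j+\sum_{j=1}^k\bigl(z_j^{(0)}+z_j^{(1)}\bigr)$.

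The second and third claims then follow by combining this with the reciprocity of the first step. Given any solution of the form $\sum_{i=1}^{g-l}\tr_i+k\infty^{(0)}+(l-k)\infty^{(1)}$ at index~$n$, each unit shift $\pm(\vec\omega+\mathcal{B}_\Omega\vec\tau)$ in the target exchanges one copy of $\infty^{(1)}$ for one copy of $\infty^{(0)}$ or vice versa, producing an effective solution at every index in $\{n-k,\ldots,n+l-k\}$; at the two extreme indices the only candidate for an involution-symmetric pair would be $\infty^{(0)}+\infty^{(1)}$, but all copies sit on the same sheet, so the divisor is non-special and the previous paragraph gives uniqueness. For the disjoint-supports assertion I would argue by contradiction. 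If $D_{n'}$ at $n':=n-(-1)^k$ were special, the hyperelliptic structure lets me realize its free pair as $\infty^{(0)}+\infty^{(1)}$, and the $l=2$ instance of the shift step then forces $D_n=E'+2\infty^{(k)}$, contradicting $\infty^{(k)}\notin D_n$; hence $D_{n'}$ is unique. If further $D_n=\tr+E$ and $D_{n'}=\tr+E'$ shared a point $\tr$, the reciprocity identity yields $\mathcal{A}\bigl(E+\infty^{(1-k)}\bigr)=\mathcal{A}\bigl(E'+\infty^{(k)}\bigr)$, so these two degree-$g$ divisors are linearly equivalent. Since $D_n$ is non-special, $E$ contains no involution-symmetric pair, so $E+\infty^{(1-k)}$ is itself non-special, which forces $E+\infty^{(1-k)}=E'+\infty^{(k)}$ as divisors and hence $\infty^{(k)}\in E\subset D_n$, contradicting the hypothesis once more.

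The main obstacle I expect is the careful case analysis in the last step: I have to rule out not only an involution pair entirely inside $E$ but also the possibility that the pair is $\{\infty^{(0)},\infty^{(1)}\}$ and the coincidence $\tr=\infty^{(1-k)}$, tracking the multiplicities on both sides of a linear equivalence. The hyperelliptic structure theorem of the second paragraph is exactly what makes this analysis finite, since it reduces specialness on $\RS$ to the purely combinatorial condition of containing an involution-symmetric pair.
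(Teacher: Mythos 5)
Your proposal is correct, and its backbone is the same as the paper's: the whole argument rests on the identity $\int_{\infty^{(1)}}^{\infty^{(0)}}d\vec\Omega=\vec\omega+\mathcal{B}_\Omega\vec\tau$ (your reciprocity computation is exactly how the paper obtains \eqref{lateaddition} from \eqref{thirdkindb} and \eqref{dG1}), so that passing from index $n$ to $n+j$ amounts to adding the Abel image of $j\left(\infty^{(0)}-\infty^{(1)}\right)$, after which everything is Abel's theorem plus the hyperelliptic description of special divisors. Where you differ is in packaging: for the unique-or-special dichotomy and the fixed non-special part, the paper simply appeals to Section~\ref{ss:jip}, where two solutions are shown to differ by a principal divisor with at most $g$ poles, and such divisors on a hyperelliptic surface are lifts of rational functions on $\overline\C$, hence involution-symmetric; you instead invoke Riemann--Roch and the Clifford-type structure theorem $D=D_0+k\cdot(\text{hyperelliptic pencil})$, which actually makes the ``fixed non-special $\sum_{j=1}^{g-2k}\tr_j$'' claim more explicit than the paper's terse treatment. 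Similarly, for disjointness the paper shows the divisor $\sum_{j\le g'}\tr_{n,j}-\sum_{j\le g'}\tr_{n-(-1)^k,j}-(-1)^k\infty^{(0)}+(-1)^k\infty^{(1)}$ would be principal and hence involution-symmetric, while you cancel the shared point and use that a non-special effective degree-$g$ divisor is the unique effective representative of its class; these are equivalent routes. Two small points to tighten: in the uniqueness step at index $n-(-1)^k$, the special solution need not have the exact form $F+\infty^{(0)}+\infty^{(1)}$ (the fixed non-special part may itself carry further copies of one of the points over infinity), so the correct conclusion of the shift step is that the solution at index $n$ contains $\infty^{(k)}$, not literally $E'+2\infty^{(k)}$; and the non-specialness of $E+\infty^{(1-k)}$ uses not only that $E$ contains no involution-symmetric pair but also that $\infty^{(k)}\notin E$, which is supplied by the hypothesis $\infty^{(k)}\notin\sum_{j=1}^g\tr_{n,j}$. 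Neither affects the validity of the argument.
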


\begin{remark}
Propositions~\ref{prop:ni} says that the non-unique solutions of \eqref{main-jip} occur in blocks. The last unique solution before such a block consists of a non-special finite divisor and multiple copies of $\infty^{(1)}$. Trading one point $\infty^{(1)}$ for $\infty^{(0)}$ and leaving the rest of the points unchanged produces a solution of \eqref{main-jip} (necessarily non-unique as it contains an involution-symmetric pair $\infty^{(1)}+\infty^{(0)}$) for the subsequent index. Proceeding in this manner, a solution with the same non-special finite divisor and all the remaining points being $\infty^{(0)}$ is produced, which starts a block of unique solutions. In particular, there cannot be more than $g-1$ non-unique solutions in a row.
\end{remark}

\begin{definition}
\label{def:2}
In what follows, we always understand under $\sum_{j=1}^g\tr_{n,j}$ either the unique solution of \eqref{main-jip} or the solution where all the involution-symmetric pairs are taken to be $\infty^{(1)}+\infty^{(0)}$. Under this convention, given $\varepsilon>0$, we say that an index $n$ belongs to $\N_\varepsilon\subseteq\N$ if and only if
\begin{itemize}
\item[(i)] the divisor $\sum_{j=1}^g\tr_{n,j}$ satisfies $\big|\pi(\tr_{n,j})\big|\leq1/\varepsilon$ for all $\tr_{n,j}\in \RS^{(0)}$;
\item[(ii)] the divisor $\sum_{j=1}^g\tr_{n-1,j}$ satisfies $\big|\pi(\tr_{n-1,j})\big|\leq1/\varepsilon$ for all $\tr_{n-1,j}\in \RS^{(1)}$.
\end{itemize}
\end{definition}

To show that Definition~\ref{def:2} is meaningful we need to discuss limit points of $\{\sum_{j=1}^g\tr_{n,j}\}_{n\in\N^\prime}$, $\N^\prime\subset\N$, where convergence is understood in the topology of $\RS^g/\Sigma_g$, $\RS^g$ quotient by the symmetric group $\Sigma_g$. The following proposition shows that these limiting divisors posses the same block structure as the divisors themselves.

\begin{proposition}
\label{prop:sequence}
Let $\N^\prime$ be such that all the limit points of $\big\{\sum_{i=1}^g\tr_{n,i}\big\}_{n\in\N^\prime}$ assume the form
\begin{equation}
\label{limdiv1}
\sum_{i=1}^{g-2k-l_0-l_1}\tr_i+ \sum_{i=1}^k\left(z_i^{(0)}+z_i^{(1)}\right) + l_0\infty^{(0)} + l_1\infty^{(1)}
\end{equation}
for a fixed non-special divisor $\sum_{i=1}^{g-2k-l_0-l_1}\tr_i$, $\big|\pi(\tr_i)\big|<\infty$, and arbitrary $\{z_i\}_{i=1}^k\subset\C$. Then all the limit points of the sequence $\big\{\sum_{i=1}^g\tr_{n+j,i}\big\}_{n\in\N^\prime}$, $j\in\{-l_0-k,\ldots,l_1+k\}$, assume the form
\begin{equation}
\label{limdiv2}
\sum_{i=1}^{g-2k-l_0-l_1}\tr_i +  \sum_{i=1}^{k^\prime}\left(w_i^{(0)}+w_i^{(1)}\right) + \big(l_0+j+k-k^\prime\big)\infty^{(0)} + \big(l_1-j+k-k^\prime\big)\infty^{(1)},
\end{equation}
where $0\leq k^\prime\leq\min\left\{l_0+k+j,l_1+k-j\right\}$ and $\{w_i\}_{i=1}^{k^\prime}\subset\C$.

If $\big\{\sum_{i=1}^g\tr_{n,i}\big\}_{n\in\N^\prime}$ converges to a non-special divisor $\sum_{j=1}^g\tr_j$ that does not contain $\infty^{(k)}$, $k\in\{0,1\}$, then the sequence $\big\{\sum_{i=1}^g\tr_{n-(-1)^k,i}\big\}_{n\in\N^\prime}$ also converges, say to $\sum_{j=1}^g\w_j$, which is non-special, and $\left\{\tr_j\right\}_{j=1}^g\cap\left\{\w_j\right\}_{j=1}^g=\varnothing$.
\end{proposition}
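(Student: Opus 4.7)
The plan is to combine the algebraic description of JIP solutions in Proposition~\ref{prop:ni} with compactness of the symmetric product $\RS^g/\Sigma_g$. The crucial observation is that the right-hand side of \eqref{main-jip} depends affinely on $n$: subtracting the relations for indices $n$ and $n+j$ and passing to a subsubsequence $\N''\subseteq\N'$ along which $\sum_{i=1}^g\tr_{n+j,i}\to\tilde D_j$ in $\RS^g/\Sigma_g$ (which exists by compactness), continuity of the Abel--Jacobi map yields
\[
\mathrm{AJ}(\tilde D_j) \equiv \mathrm{AJ}(D) + j(\vec\omega+\mathcal{B}_\Omega\vec\tau) \quad (\mdp d\vec\Omega),
\]
where $D$ denotes the limit divisor \eqref{limdiv1}. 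Thus $\tilde D_j$ solves the Jacobi inversion problem whose right-hand side is shifted from that solved by $D$, and it suffices to show that every such solution has the form \eqref{limdiv2}.

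This is where I would invoke Proposition~\ref{prop:ni} at the Abel--Jacobi level. Two hyperelliptic identities are essential: first, $\mathrm{AJ}(\infty^{(0)})-\mathrm{AJ}(\infty^{(1)})\equiv \vec\omega+\mathcal{B}_\Omega\vec\tau$ modulo periods, which is precisely how the shift in Proposition~\ref{prop:ni} encodes trading $\infty^{(1)}$ for $\infty^{(0)}$; second, because $d\vec\Omega$ is anti-invariant under the hyperelliptic involution, every pair $z^{(0)}+z^{(1)}$ has the same Abel--Jacobi image (mod periods) as $\infty^{(0)}+\infty^{(1)}$. These two identities guarantee that every divisor of the form \eqref{limdiv2} with the stated parameter range has Abel--Jacobi image equal to $\mathrm{AJ}(D)+j(\vec\omega+\mathcal{B}_\Omega\vec\tau)$. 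Conversely, Proposition~\ref{prop:ni} forces every solution of the shifted JIP to share the fixed non-special backbone $\sum_{i=1}^{g-2k-l_0-l_1}\tr_i$ inherited from $D$, with the remaining $2k+l_0+l_1$ slots filled by involution-symmetric pairs or copies of $\infty^{(0)}$ and $\infty^{(1)}$. A combinatorial enumeration parametrized by the integer $k'$ recovers exactly \eqref{limdiv2}.

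For the second part, suppose $D=\sum_{j=1}^g\tr_j$ is non-special and contains no $\infty^{(k)}$. Then Proposition~\ref{prop:ni}'s last assertion, applied at the Abel--Jacobi level rather than for a specific integer index, produces a unique non-special solution $\sum_{j=1}^g\w_j$ of the JIP shifted by $-(-1)^k(\vec\omega+\mathcal{B}_\Omega\vec\tau)$ and guarantees $\{\tr_j\}\cap\{\w_j\}=\varnothing$. Uniqueness upgrades subsubsequential convergence to convergence of the full sequence $\{\sum_{i=1}^g\tr_{n-(-1)^k,i}\}_{n\in\N'}$, since any subsubsequential limit must satisfy the same Abel--Jacobi relation and therefore coincide with $\sum_{j=1}^g\w_j$.

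The main obstacle is the bookkeeping that aligns the convention of Definition~\ref{def:2} with the block structure of Proposition~\ref{prop:ni}. In particular, when the limit divisor $D$ contains copies of $\infty^{(0)}$ and $\infty^{(1)}$, one must verify that these arise either from the genuine escape to infinity of some $\tr_{n,j}$ or from the convention of packaging involution-symmetric pairs as $\infty^{(0)}+\infty^{(1)}$, and that both sources combine consistently with the shifted block prescribed by \eqref{limdiv2}. Once this identification is made and the two hyperelliptic identities above are established (the second being essentially an application of the anti-invariance of $d\vec\Omega$), the rest of the argument is a routine compactness-plus-continuity exercise.
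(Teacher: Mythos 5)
Your proposal is correct and follows essentially the same route as the paper's proof: pass to convergent subsequences, use continuity of the Abel map together with \eqref{lateaddition} and the anti-symmetry $\vec\Omega\left(z^{(0)}\right)\equiv-\vec\Omega\left(z^{(1)}\right)$ to identify the Abel--Jacobi image of any limit divisor, and then pin down its form through the fact that solutions of a Jacobi inversion problem on the hyperelliptic surface are determined up to involution-symmetric pairs (Abel's theorem plus the observation that principal divisors with at most $g$ poles are lifts from $\overline\C$). The only presentational difference is that the paper reruns this Abel's-theorem argument directly (as it must, since Proposition~\ref{prop:ni} is stated only for the integer-index constants in \eqref{main-jip}), whereas you cite Proposition~\ref{prop:ni} ``at the Abel--Jacobi level''; since the general facts are established in Section~\ref{ss:jip}, this is legitimate and not a gap.
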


\begin{remark}
Proposition~\ref{prop:sequence} shows that the sets $\N_\varepsilon$ are well-defined for all $\varepsilon$ small enough. Indeed, let $\big\{\sum_{i=1}^g\tr_{n,i}\big\}_{n\in\N^\prime}$ be a subsequence that converges in $\RS^g/\Sigma_g$ (it exists by compactness of $\RS$). Naturally, the limiting divisor can be written in the form \eqref{limdiv1}. Then it follows \eqref{limdiv2} that the sequence $\big\{\sum_{i=1}^g\tr_{n-l_0-k,i}\big\}_{n\in\N^\prime}$ converges to $\sum_{i=1}^{g-2k-l_0-l_1}\tr_i + \left(l_1+l_0+2k\right)\infty^{(1)}$. Further, by the second part of the proposition, the sequence $\big\{\sum_{i=1}^g\tr_{n-l_0-k-1,i}\big\}_{n\in\N^\prime}$ also converges and the limit, say $\sum_{j=1}^g\w_j$, does not contain $\infty^{(1)}$. Thus, $\{n-l_0-k:~n\in\N^\prime\}\subset\N_\varepsilon$ for any $\varepsilon$ satisfying $\varepsilon|\pi(\tr_i)|\leq1$ if $\tr_i\in\RS^{(0)}$ and $\varepsilon|\pi(\w_i)|\leq1$ if $\w_i\in\RS^{(1)}$.
\end{remark}

Equipped with the solutions of \eqref{main-jip}, we can construct the \emph{Szeg\H{o}} functions of $\rho$ on $\RS$, which are the solutions of a sequence of boundary value problems on $L$.

\begin{proposition}
\label{prop:Sn}
For each $n\in\N$ there exists a function, say $S_n$, with continuous traces on both sides of $\left(L\cup\bigcup_{k=1}^g\mathbf{a}_k\right)\setminus E$ such that $S_n\map^n$ is meromorphic in $\RS\setminus L$ and
\begin{equation}
\label{jumpSn}
(S_n\map^n)^- = (\rho/h^+)(S_n\map^n)^+ \quad \mbox{on} \quad L\setminus E.
\end{equation}
If we let $m(t)$ to be the number of times, possibly zero, $t$ appears in $\left\{\tr_{n,j}\right\}_{j=1}^g$, then $S_n$ is non-vanishing and finite except for
\begin{equation}
\label{SnE}
\left\{
\begin{array}{ll}
|S_n(z^{(k)})| \sim |z-a|^{m(a)/2-(-1)^k(1+2\alpha_a)/4} & \mbox{as} \quad z^{(k)}\to a\in\big\{a_j\big\}_{j=1}^p, \smallskip\\
|S_n(z^{(k)})| \sim |z-b|^{m(b)/2-1/2+(-1)^k/4} & \mbox{as} \quad z^{(k)}\to b\in\big\{b_j\big\}_{j=1}^{p-2m}, \smallskip\\
|S_n(z^{(1)})| \sim |z-b|^{m(b)-m_b/2} & \mbox{as} \quad z^{(1)}\to b\in\big\{b_j^{(1)}\big\}_{j=p-2m+1}^g,
\end{array}
\right.
\end{equation}
and has a zero of multiplicity $m(t)$ at each $t\in\left\{\tr_{n,j}\right\}_{j=1}^g\setminus\big(\left\{a_j\right\}_{j=1}^p\cup\big\{b_j^{(1)}\big\}_{j=1}^g\big)$ and $m_b/2$ is the multiplicity of $b$ in $\big\{b_j^{(1)}\big\}_{j=p-2m+1}^g$. 

Conversely, if for given $n\in\N$ there exists a function $S$ with continuous traces on $\left(L\cup\bigcup_{k=1}^g\mathbf{a}_k\right)\setminus E$ such that $S\map^n$ is meromorphic in $\RS\setminus L$ and $S$ satisfies \eqref{jumpSn} and \eqref{SnE} with $\sum_{j=1}^g\tr_{n,j}$ replaced by some divisor $\sum\tr_j$, then $\sum\tr_j$ solves \eqref{main-jip} for the index $n$ and $S=pS_n$ for a polynomial $p$ such that $(p\circ\pi)=\sum\tr_j-\sum\tr_{n,j}$.

Finally, given $\epsilon>0$ and $\varepsilon>0$, there exists constant  $C_{\epsilon,\varepsilon}<\infty$ such that
\begin{equation}
\label{SnNormalized}
\left|\frac{S_{n-1}(\z)}{S_n(\z)}\frac{S_n(\infty^{(0)})}{S_{n-1}(\infty^{(1)})}\right| \leq C_{\epsilon,\varepsilon}
\end{equation}
for $n\in\N_\varepsilon$ and $\z\in\RS_{n,\epsilon}:=\RS\setminus\cup_{j=1}^n N_\epsilon(\tr_{n,j})$, where $N_\epsilon(\tr_{n,j})$ is a connected neighborhood of $\tr_{n,j}$ such that $\pi(N_\epsilon(\tr_{n,j}))$ is the $\epsilon$-ball centered at $t_{n,j}$ in the spherical metric\footnote{That is, $\dist(z_1,z_2)=2|z_1-z_2|(1+|z_1|^2)^{-1/2}(1+|z_2|^2)^{-1/2}$ if $|z_1|,|z_2|<\infty$ and $\dist(z,\infty)=2(1+|z|^2)^{-1/2}$.}.
\end{proposition}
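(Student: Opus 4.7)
The plan is to construct $S_n$ as a product of exponentiated Abelian integrals on $\RS$, each factor responsible for one piece of the prescribed behavior. Schematically, I would write $S_n=\exp\bigl(F+\sum_j\Omega_j+\vec k\cdot\vec\Omega\bigr)$, where $F(\z):=\frac{1}{2\pi i}\oint_L\log(\rho/h^+)(s)\,d\omega(s,\z)$ is a Cauchy-type integral against a normalized third-kind differential $d\omega(\cdot,\z)$ on $\RS$ (simple poles at $\z$ and a base point, vanishing $\mathbf{a}$-periods) producing the jump $\log(\rho/h^+)$ on $L$; each $\Omega_j$ is an Abelian integral of the third kind with simple logarithmic singularities at $\tr_{n,j}$ and $b_j^{(1)}$ producing, after exponentiation, a simple zero at $\tr_{n,j}$ and a simple pole at $b_j^{(1)}$; and $\vec k\cdot\vec\Omega(\z)$ is a linear combination of the normalized holomorphic integrals from \eqref{periodsa} used to adjust residual $\mathbf{a}$-periods. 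By the Riemann bilinear identity and the definition of $\vec c_\rho$ in \eqref{vectors}, $F$ has zero $\mathbf{a}$-periods and $\mathbf{b}$-periods equal to $2\pi i\vec c_\rho$.

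With this ansatz, requiring $S_n\map^n$ to be meromorphic across each $\mathbf{a}_k$ (so $S_n$ must carry the multiplier reciprocal to that of $\map^n$ given by \eqref{jumpPhiperoids}) and to have jump $\rho/h^+$ on $L$ translates, via \eqref{jumpPhiperoids} and \eqref{GreenPeriods}, into a system of $g$ period equations for $F+\sum_j\Omega_j+\vec k\cdot\vec\Omega$. After absorbing $\vec k$ into the $\mathbf{a}$-periods and exponentiating, this system collapses to exactly the Jacobi inversion relation \eqref{main-jip}; its solvability follows from Jacobi inversion, and the uniqueness issues are precisely those catalogued in Proposition~\ref{prop:ni}. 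The local behavior \eqref{SnE} then comes from a Plemelj--Sokhotski analysis at the endpoints of $L$: near $a\in\{a_j\}_{j=1}^p$ the power $(z-a)^{\alpha_a}$ in $\rho$ combined with the $(z-a)^{-1/2}$ contribution of $h^+$ produces an exponent $\mp(1+2\alpha_a)/4$ on the two sheets, the $\pm 1/4$ at simple zeros of $B$ comes analogously from the $\sqrt{z-b}$ behavior of $h$, and the divisor exponent $m(t)/2$ is supplied by the exponentiated $\Omega_j$, with the tailored cancellation $m(b)-m_b/2$ at $b_j^{(1)}$ reflecting the prescribed pole of the corresponding $\Omega_j$.

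For the converse, if $S$ satisfies the same jump and local behavior with a divisor $\sum\tr_j$ in place of $\sum\tr_{n,j}$, then $S/S_n$ has matching multiplicative jumps across $L$ and across every $\mathbf{a}_k$, and an analysis at $E$ using \eqref{SnE} shows the only singularities that survive are the poles and zeros of the two divisors, so $S/S_n$ extends to a meromorphic function on $\RS$ with divisor $\sum\tr_j-\sum\tr_{n,j}$; since both divisors have the same degree $g$, this meromorphic function is the pull-back of a rational function on $\overline\C$, and the assumption that $S\map^n$ is meromorphic on $\RS\setminus L$ forces this to be a polynomial $p$ with $(p\circ\pi)=\sum\tr_j-\sum\tr_{n,j}$. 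For the uniform bound \eqref{SnNormalized}, the definition of $\N_\varepsilon$ together with Proposition~\ref{prop:sequence} confines both divisors $\sum\tr_{n,j}$ and $\sum\tr_{n-1,j}$ to a compact subset of $\RS^g/\Sigma_g$ lying a definite distance from involution-symmetric configurations; the building blocks of $S_n$ depend continuously on their divisor parameters on such compact sets, so the normalized ratio is uniformly bounded on $\RS_{n,\epsilon}$, the $\epsilon$-neighborhoods around the $\tr_{n,j}$ being excised precisely because $S_n$ vanishes there. The hardest part I anticipate is the bookkeeping at $E$: the exponents in \eqref{SnE} encode an intricate interplay among the fractional powers of the weight, the half-integer powers of $h$ dictated by \eqref{functionh}--\eqref{pointsb}, and the divisor multiplicities, and Condition~\hyperref[cond1]{GP} is essential to make the local model uniform across the three cases listed, especially when divisor points collide with elements of $E$ or with $b_j^{(1)}$.
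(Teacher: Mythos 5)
Your outline follows the same overall architecture as the paper's proof: an explicit multiplicative construction on $\RS$ whose period-matching collapses to the Jacobi inversion problem \eqref{main-jip}, a Plemelj-type local analysis at $E$ for \eqref{SnE}, an Abel-theorem/divisor argument for the converse, and compactness of the admissible divisor configurations for \eqref{SnNormalized}. The difference is in the machinery: the paper assembles $S_n$ in \eqref{defofSn} from Cauchy-type exponentials taken against the involution-symmetrized kernel $d\Omega_{\z,\z^*}$ (whose antisymmetry between sheets is what makes the exponents in \eqref{SnE} drop out of \eqref{SrhoUnivalent}--\eqref{ShTrivalent}), from the factors $S_{\vec m_n}/S_{\vec\tau}^n$ built with piecewise-constant densities on the $\mathbf{b}$-cycles, and from a ratio of Riemann theta functions $\Theta_n$ carrying the divisor; you instead use a one-point Cauchy kernel, exponentials of normalized third-kind integrals for the divisor, and exponentials of holomorphic integrals for the residual periods. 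These are classically equivalent realizations, so your route is viable; the theta-function realization makes single-valuedness across the $\mathbf{b}$-cycles automatic and leaves only the $\mathbf{a}$-multipliers \eqref{theta-combined-jump} to match, whereas your third-kind exponentials shift that bookkeeping to the $\mathbf{b}$-multipliers \eqref{thirdkindb}; either way the matching reduces to \eqref{main-jip}, as you say.

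Two steps in your sketch need genuine arguments beyond what you give. First, for \eqref{SnNormalized} it is not enough that ``the building blocks depend continuously on the divisor on compact sets'': (i) you must prove that the limit points of $\big\{\sum\tr_{n,j}\big\}_{n\in\N_\varepsilon}$ and $\big\{\sum\tr_{n-1,j}\big\}_{n\in\N_\varepsilon}$ contain no special divisors---this is exactly where Definition~\ref{def:2} and Proposition~\ref{prop:sequence} are used in the paper (the sets $\mathfrak{C}_\varepsilon^0,\mathfrak{C}_\varepsilon^1$), and without it the normalizations $S_n(\infty^{(0)})$ and $S_{n-1}(\infty^{(1)})$ can degenerate; and (ii) the integer lattice vectors in the period correction grow with $n$, and in your realization $\exp\big\{2\pi i\,\vec m^T\vec\Omega\big\}$ has $\mathbf{a}$-multipliers of modulus $\exp\big\{2\pi\,\vec m^T\mathrm{Im}(\mathcal{B}_\Omega)\vec e_k\big\}$, which is unbounded in $\vec m$; one must therefore show that only bounded combinations (such as $\vec m_n-\vec m_{n-1}$ and $n\vec\tau-\vec m_n$) enter the normalized ratio, which is the content of \eqref{xnynbound}--\eqref{mn} and \eqref{ratioSx} in the paper. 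Second, in the converse, polynomiality of $p$ does not follow from meromorphy of $S\map^n$ alone: one first concludes that $\sum\tr_j$ has exactly $g$ points (a rational function on $\RS$ has as many zeros as poles, so the degree is a conclusion, not a hypothesis), then that $S/S_n$ is the lift of a rational function on $\overline\C$ because it has at most $g$ poles, and finally that all of its poles are equally split between $\infty^{(0)}$ and $\infty^{(1)}$---for which you need the structure supplied by Proposition~\ref{prop:ni} and the convention of Definition~\ref{def:2} that every involution-symmetric pair in $\sum\tr_{n,j}$ is $\infty^{(0)}+\infty^{(1)}$; that $\sum\tr_j$ solves \eqref{main-jip} then follows from Abel's theorem. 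With these points filled in, your construction yields a complete proof.
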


\begin{remark}
The integers $m(e)$, $e\in E$, in the first two lines of \eqref{SnE} are either 0 or 1 as otherwise $\sum_{j=1}^g\tr_{n,j}$ would be special. 
\end{remark}

\begin{remark}
The estimate in \eqref{SnNormalized} cannot be improved in a sense that if for some subsequence of indices $\varepsilon_n\to0$, $1/\varepsilon_n:=\max\big\{\max_{\tr_{n-1,j}\in\RS^{(1)}}\big|\pi(\tr_{n-1,j})\big|,\max_{\tr_{n,j}\in\RS^{(0)}}\big|\pi(\tr_{n,j})\big|\big\}$, then $C_{\epsilon,\varepsilon_n}\to\infty$. 
\end{remark}

\begin{remark}
We would like to stress that $S_n$ is unique for $n\in\N_\varepsilon$, $\varepsilon>0$, as \eqref{main-jip} is uniquely solvable for all such indices.
\end{remark}

Propositions~\ref{prop:ni}--\ref{prop:Sn} are proved in Section~\ref{s:s}. 

\subsection{Main Theorem}
\label{ss:MTh}

Let $\{[n/n]_{\widehat\rho}\}_{n\in\N}$ be the sequence of diagonal Pad\'e approximants to the function $\widehat\rho$. As before, denote by  $q_n$ the denominator polynomial of $[n/n]_{\widehat\rho}$ (Nuttall-Stahl orthogonal polynomial \eqref{ortho}) and by $R_n$ the reminder function of $[n/n]_{\widehat\rho}$ \eqref{linsys} (the function of the second kind \eqref{secondkind} for $q_n$). Recall that by $S_n$ and $S_n^*$ we denote the pull-back functions of $S_n$ on $\RS$ from $D^{(0)}$ and $D^{(1)}$ to $D^*$, respectively. Then the following theorem holds.

\begin{theorem}
\label{thm:SA}
Let $\Delta$ be a minimal capacity contour as constructed in Section~\ref{ss:AF} subject to Condition~\hyperref[cond1]{GP} and assumption $\xi=1$ in \eqref{a11}. Further, let $\widehat\rho$ be given by \eqref{hatrho} with $\rho\in\W$, $\N_\varepsilon$ be as in Definition~\ref{def:2} for fixed $\varepsilon>0$, and $S_n$ be given by Proposition~\ref{prop:Sn}. Then for all $n\in\N_\varepsilon$ it holds that
\begin{equation}
\label{SA1}
\left\{
\begin{array}{lll}
q_n &=& \displaystyle \left(1+\upsilon_{n1}\right) \gamma_nS_n\map^n + \upsilon_{n2}\gamma_n^* S_{n-1}\map^{n-1},\bigskip \\
R_n &=& \displaystyle \left(1+\upsilon_{n1}\right)\gamma_n \frac{hS_n^*}{\map^n}+\upsilon_{n2}\gamma_n^*\frac{hS_{n-1}^*}{\map^{n-1}},
\end{array}
\right.
\end{equation}
locally uniformly in $D^*$, where $|\upsilon_{nj}|\leq c(\varepsilon)/n$ in $\overline\C$ while $\upsilon_{nj}(\infty)=0$ and
\[
\gamma_n:=\frac{\cp(\Delta)^n}{S_n(\infty)} \quad \mbox{and} \quad \gamma_n^*:=\frac{\cp(\Delta)^{n+1}}{S_{n-1}^*(\infty)}.
\]
Moreover, it holds locally uniformly in $\Delta\setminus E$ that
\begin{equation}
\label{SA2}
\left\{
\begin{array}{lll}
q_n &=& \displaystyle \left(1+\upsilon_{n1}\right)\gamma_n\left(\left(S_n\map^n\right)^++\left(S_n\map^n\right)^-\right) + \upsilon_{n2}\gamma_n^*\left(\left(S_{n-1}\map^{n-1}\right)^++\left(S_{n-1}\map^{n-1}\right)^-\right), \bigskip \\
R_n^\pm &=& \displaystyle \left(1+\upsilon_{n1}\right) \gamma_n \left(\frac{hS_n^*}{\map^n}\right)^\pm+\upsilon_{n2} \gamma_n^* \left(\frac{hS_{n-1}^*}{\map^{n-1}}\right)^\pm.
\end{array}
\right.
\end{equation}
\end{theorem}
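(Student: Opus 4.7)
The plan is to apply the Deift--Zhou steepest descent method to the Fokas--Its--Kitaev matrix Riemann--Hilbert problem whose $(1,1)$ and $(1,2)$ entries recover $q_n$ and $R_n$: the $2\times 2$ matrix $Y$ analytic in $\C\setminus\Delta$, with jump $Y^+=Y^-\bigl(\begin{smallmatrix}1&\rho\\0&1\end{smallmatrix}\bigr)$ on $\Delta$ and normalization $Y(z)z^{-n\sigma_3}\to I$ at $\infty$. The first renormalization $T:=Y\map^{-n\sigma_3}$ is forced by \eqref{a11} with $\xi=1$, which makes $T\to I$ at $\infty$; by \eqref{Da} the jump on $\Delta$ rewrites as an oscillating matrix with off-diagonal entry $\rho$, while by \eqref{jumpPhiperoids} new constant unimodular jumps $e^{\pm 2\pi i n\omega_k}$ and $e^{\pm 2\pi i n\tau_k}$ emerge on the $\mathbf{a}$- and $\mathbf{b}$-cycles. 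A second transformation factorizes the oscillating jump as a triangular sandwich and deforms the contour into a lens around $\Delta$; since $|\map|>1$ strictly off $\Delta$ in $D^*$, the triangular factors on the lens boundary are $I+O(\map^{-2n})$, exponentially small away from $E$, leaving an off-diagonal jump on $\Delta$ and the constant jumps on the cycles.

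The global parametrix $N_n$ is built from the Szeg\H{o} functions of Proposition~\ref{prop:Sn}. By construction, the cycle jumps of $S_n\map^n$ cancel the constant jumps of $\map^n$ precisely via the congruence \eqref{main-jip} and the choice of $\vec c_\rho$ in \eqref{vectors}, while \eqref{jumpSn} combined with $h^++h^-=0$ on the smooth part of $\Delta$ matches the off-diagonal jump. A naive diagonal ansatz built from $(S_n\map^n, S_n^*\map^{-n})$ however fails to normalize at $\infty$ whenever $\sum \tr_{n,j}$ contains $\infty^{(0)}$ or $\infty^{(1)}$; to circumvent this I would use a two-term combination mixing $S_n\map^n$ with $S_{n-1}\map^{n-1}$, the coefficients chosen so that $N_n$ attains the correct leading behavior at infinity, which is possible as soon as \eqref{main-jip} is uniquely solvable for $n-1$ and $n$. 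The uniform invertibility of this two-term parametrix, with inverse bounded by $c(\varepsilon)$, is precisely what the restriction $n\in\N_\varepsilon$ together with the sharp estimate \eqref{SnNormalized} provides.

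Next, local parametrices are needed on small disks around each $e\in E$. At points $a_j\in A$ and at the simple zeros $b_j$ with $j\leq p-2m$, the classical Airy parametrix adapted to the local exponent $\alpha_e$ applies. At the triple junctions $e\in E\setminus A$, granted by Condition~\hyperref[cond1]{GP}(ii), the three-arc structure together with the sum rule \eqref{rhoSum} allows a three-ray parametrix built from pure power functions of a local conformal chart, following the template of \cite{M-FRakhSuet12,uBY2}. The matching of these local parametrices to $N_n$ on the boundary circles is verified using the boundary behavior \eqref{SnE} and the local structure of $h$ near $E$ given in \eqref{functionh}--\eqref{pointsb}.

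Finally, setting $\mathcal{R}_n:=TP^{-1}$ with $P$ equal to $N_n$ outside the disks and to the local parametrix inside, the jumps of $\mathcal{R}_n$ are $I+O(1/n)$ on a fixed contour uniformly for $n\in\N_\varepsilon$, and the standard small-norm analysis gives $\mathcal{R}_n=I+O(1/n)$ with $\mathcal{R}_n(\infty)=I$. Unraveling $Y=\mathcal{R}_nP\map^{n\sigma_3}$ and reading off the first row produces \eqref{SA1} in $D^*$, with $\gamma_n,\gamma_n^*$ arising from the leading-coefficient match at $\infty$ and $\upsilon_{n1},\upsilon_{n2}$ absorbing the entries of $\mathcal{R}_n-I$ (their vanishing at $\infty$ being forced by $\mathcal{R}_n(\infty)=I$). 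The boundary identities \eqref{SA2} follow from combining the additive structure of $\map^n$ across $\Delta$ with the lens analysis. The main obstacle is the construction and control of the two-term global parametrix: the block structure of non-unique solutions to \eqref{main-jip} described in Proposition~\ref{prop:ni} forces a genuinely $2\times 2$ mixing of the Szeg\H{o} functions at consecutive levels, and it is showing that this mixing produces a parametrix uniformly invertible on $\N_\varepsilon$ that is the technical heart of the argument.
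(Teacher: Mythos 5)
Your overall architecture is the same as the paper's (Fokas--Its--Kitaev problem, renormalization by $\map^{-n\sigma_3}$, opening of lenses, a global parametrix built from the Szeg\H{o} functions of Proposition~\ref{prop:Sn}, local parametrices at $E$, small-norm analysis of $\mathcal{R}$, and unraveling the first row), but the local analysis as you describe it would fail. Your assignment of local models is backwards and internally inconsistent: the simple zeros $b_j$, $j\le p-2m$, \emph{are} the triple junctions $E\setminus A$, so you prescribe two different parametrices for the same points. At a univalent end $a\in A$ the function $h$ blows up like $(z-a)^{-1/2}$, so $G_a^2(z)=\bigl(\int_a^z h\bigr)^2$ is a conformal chart and the model that carries the algebraic exponent is the Bessel--Hankel parametrix $\Psi(\cdot;\alpha_a)$ used in Lemma~\ref{lem:rhp} (after \cite{KMcLVAV04}); there is no ``Airy parametrix adapted to the local exponent $\alpha_e$'' --- the Airy model has no exponent parameter and corresponds to the $\zeta^{3/2}$ scaling. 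Conversely, at a trivalent point $b$ the function $h$ vanishes like $(z-b)^{1/2}$, $nG_b$ ranges from $0$ to order $n\delta^{3/2}$ inside $U_{b,\delta}$, and the lens jumps $\bigl(\begin{smallmatrix}1&0\\ \map^{-2n}(\rho_{b,j-1}^{-1}+\rho_{b,j+1}^{-1})&1\end{smallmatrix}\bigr)$ on the arcs $\widetilde\Delta_{b,j}$ are genuinely non-constant there; a three-ray parametrix built from ``pure power functions of a local conformal chart'' has only constant jumps and cannot satisfy these jump conditions nor deliver the matching $\mathcal{I}+\mathcal{O}(1/n)$ on $\partial U_{b,\delta}$. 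The correct model at the trivalent points is the Airy parametrix $\Upsilon$ of Lemma~\ref{lem:rhpet}; with your choices the conditions \rhpa(d) and \rhpb(d) would not hold and the small-norm step collapses.

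The step you single out as the ``technical heart'' --- a within-row two-term combination of $S_n\map^n$ and $S_{n-1}\map^{n-1}$ in the global parametrix --- is neither needed under the theorem's hypotheses nor would it achieve what you want. For $n\in\N_\varepsilon$ the divisor $\sum\tr_{n,j}$ stays away from $\infty^{(0)}$ and $\sum\tr_{n-1,j}$ away from $\infty^{(1)}$, so $S_n(\infty)$ and $S_{n-1}^*(\infty)$ are bounded away from zero; the parametrix \eqref{eq:n}, whose \emph{first} row is built from $S_n$ and \emph{second} row from $S_{n-1}$ (this inter-row pairing reflects that the second row of $\mathcal{Y}$ carries $q_{n-1}$ and $R_{n-1}$), is then well defined, has $\det\equiv1$, and is uniformly controlled through \eqref{SnNormalized1}. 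In the excluded case $\infty^{(0)}\in\sum\tr_{n,j}$ your mixture would not repair the normalization anyway, since $(c_1S_n\map^n+c_2S_{n-1}\map^{n-1})\map^{-n}\to c_1S_n(\infty)$ at infinity regardless of $c_2$. The two-term structure visible in \eqref{SA1} does not come from the parametrix at all: it appears only when the error matrix $\mathcal{R}=\mathcal{I}+\mathcal{O}(1/n)$ multiplies $\mathcal{N}$, mixing its two rows with $\mathcal{O}(1/n)$ coefficients, which is also why $\upsilon_{nj}(\infty)=0$. With these two corrections your outline reduces to the proof given in Sections~\ref{s:5}--\ref{s:7}.
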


Before proceeding, we would like to make several remarks regarding the statement of Theorem~\ref{thm:SA}.

\begin{remark}
If the set $A$ consists of two points, then $\Delta$ is an interval joining them. In this case the conclusion of Theorem~\ref{thm:SA} is contained in \cite{Nut84,ApVA04,KMcLVAV04,BY10}. Moreover, the Riemann surface $\RS$ has genus zero and therefore $\map$ is simply the conformal map of $D^*$ onto $\{|z|>1\}$ mapping infinity into infinity and having positive derivative there, while $S_n=S_\rho$ is the classical Szeg\H{o} function.
\end{remark}

\begin{remark}
Notice that both pull-back functions $S_n$ and $hS_n^*$ are holomorphic $D^*$. Moreover, $S_n$ has exactly $g$ zeros on $\RS$ that \emph{do} depend on $n$. It can be deduced from \eqref{SA1} that $q_n$ has a zero in the vicinity of each zero of $S_n$ that belongs to $D^{(0)}$. These zeros are called spurious or wandering as their location is determined by the geometry of $\RS$ and, in general, they do not approach $\Delta$ with $n$ while the rest of the zeros of $q_n$ do. On the other hand, those zeros of $S_n$ that belong to $D^{(1)}$ are the zeros of the pull-back function $S_n^*$ and therefore describe locations of the zeros of $R_n$ (points of overinterpolation).
\end{remark}

\begin{remark}
Even though our analysis allows us to treat only normal indices that are also asymptotically normal, formulae \eqref{SA1} illuminate what happens in the degenerate cases. If for an index $n$ the solution of \eqref{main-jip} is unique and contains $l$ copies of $\infty^{(1)}$, the function $S_n^*$ vanishes at infinity with order $l$. The latter combined with the second line of \eqref{SA1} shows that $[n/n]_{\widehat\rho}$ is geometrically close to overinterpolating $\widehat\rho$ at infinity with order $l$. Then it is feasible that there exists a small perturbation of $\rho$ (which leaves the vector $\vec c_\rho$ unaltered) that turns the index $n$ into a last normal index before a block of size $l$ of non-normal indices, which corresponds to the fact that solutions of \eqref{main-jip} are special for the next $l-1$ indices and the solution for the index $n+l$ contains $l$ copies of $\infty^{(0)}$.
\end{remark}

Observe that $\widehat\rho-[n/n]_{\widehat\rho} = R_n/q_n$ by \eqref{linsys} applied with $f:=\widehat\rho$. Thus, the following result on uniform convergence is a consequence of Theorem~\ref{thm:SA}.

\begin{corollary}
\label{cor:SA}
Under the conditions of Theorem~\ref{thm:SA}, it holds for $n\in\N_\varepsilon$ that
\begin{equation}
\label{errorasyspun}
\widehat\rho-[n/n]_{\widehat\rho} = \big[1+\mathcal{O}(1/n)\big]\frac{S_n^*}{S_n}\frac{h}{\map^{2n}}
\end{equation}
in $D^*\cap\pi(\RS_{n,\epsilon})$, where $\mathcal{O}(1/n)$ is uniform for each fixed $\epsilon>0$.
\end{corollary}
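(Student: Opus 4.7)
The plan is to combine the strong asymptotics of Theorem~\ref{thm:SA} with the elementary identity $\widehat\rho-[n/n]_{\widehat\rho}=R_n/q_n$, which is just \eqref{linsys} applied to $f:=\widehat\rho$, and reduce the error estimate to the quantitative control of normalized Szegő quotients provided by \eqref{SnNormalized}. First, I would pull the dominant term out of both lines of \eqref{SA1}, rewriting
\[
q_n = \gamma_nS_n\map^n\left[(1+\upsilon_{n1}) + \upsilon_{n2}\,\frac{\gamma_n^*}{\gamma_n}\frac{S_{n-1}}{S_n}\frac{1}{\map}\right],\qquad
R_n = \gamma_n\frac{hS_n^*}{\map^n}\left[(1+\upsilon_{n1}) + \upsilon_{n2}\,\frac{\gamma_n^*}{\gamma_n}\frac{S_{n-1}^*}{S_n^*}\map\right],
\]
so that taking the quotient yields
\[
\widehat\rho-[n/n]_{\widehat\rho} \,=\, \frac{hS_n^*}{S_n\map^{2n}}\cdot\frac{(1+\upsilon_{n1}) + \upsilon_{n2}\Lambda_n^*\map}{(1+\upsilon_{n1}) + \upsilon_{n2}\Lambda_n/\map},
\]
where I have abbreviated $\Lambda_n := (\gamma_n^*/\gamma_n)(S_{n-1}/S_n)$ and $\Lambda_n^*:=(\gamma_n^*/\gamma_n)(S_{n-1}^*/S_n^*)$. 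The leading factor already matches the right-hand side of \eqref{errorasyspun}, so everything reduces to showing that the second factor equals $1+\mathcal{O}(1/n)$, uniformly on $D^*\cap\pi(\RS_{n,\epsilon})$.

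Second, I would use the definitions $\gamma_n=\cp(\Delta)^n/S_n(\infty^{(0)})$ and $\gamma_n^*=\cp(\Delta)^{n+1}/S_{n-1}(\infty^{(1)})$ from Theorem~\ref{thm:SA} to rewrite
\[
\Lambda_n(z) = \cp(\Delta)\cdot\frac{S_{n-1}(z)}{S_n(z)}\frac{S_n(\infty^{(0)})}{S_{n-1}(\infty^{(1)})},
\]
with an analogous identity for $\Lambda_n^*(z)$ coming from the pull-back from the other sheet. By \eqref{SnNormalized} of Proposition~\ref{prop:Sn}, both $|\Lambda_n|$ and $|\Lambda_n^*|$ are bounded by $\cp(\Delta)C_{\epsilon,\varepsilon}$ on $\pi(\RS_{n,\epsilon})$, precisely because the set $\RS_{n,\epsilon}$ is designed to stay uniformly away from the zeros of $S_n$ on both sheets, which are the only possible singularities of these ratios in $D^*$.

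Third, since $\map$ has its unique pole on $\RS$ at $\infty^{(0)}$ and its unique zero at $\infty^{(1)}$, the reciprocal $1/\map$ is uniformly bounded on $D^*$ and vanishes at $\infty$, so $\upsilon_{n2}\Lambda_n/\map$ is $\mathcal{O}(1/n)$ uniformly in view of $|\upsilon_{n2}|\le c(\varepsilon)/n$. On any bounded subset of $D^*\cap\pi(\RS_{n,\epsilon})$ the factor $\map$ is bounded as well, so the numerator correction $\upsilon_{n2}\Lambda_n^*\map$ is likewise $\mathcal{O}(1/n)$ there. Combining and using $(1+\mathcal{O}(1/n))^{-1}=1+\mathcal{O}(1/n)$ gives the claim on any compact subset, which together with local uniformity of \eqref{SA1} is what \eqref{errorasyspun} asserts.

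The one delicate point — the main obstacle — is the behavior at infinity: the numerator correction involves the unbounded quantity $\map(z)\sim z/\cp(\Delta)$ as $z\to\infty$. Here one has to exploit not merely $\upsilon_{n2}(\infty)=0$ but the analyticity of $\upsilon_{n2}$ at infinity inherited from the Riemann--Hilbert construction of Section~\ref{s:6}, which forces $\upsilon_{n2}(z)=\mathcal{O}(1/(nz))$ near $\infty$ and makes $\upsilon_{n2}\map$ uniformly $\mathcal{O}(1/n)$ there; verifying this decay rate is the single nontrivial point of the argument, the rest being a mechanical combination of Theorem~\ref{thm:SA} and Proposition~\ref{prop:Sn}.
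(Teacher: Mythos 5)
Your proposal is correct and follows essentially the same route as the paper: divide the two formulas in \eqref{SA1}, pull out the leading factor $hS_n^*/(S_n\map^{2n})$, control the correction ratios via \eqref{SnNormalized} on $\RS_{n,\epsilon}$, and handle the products $\upsilon_{n2}\map$ and $\upsilon_{n2}/\map$ by using that $\upsilon_{n2}$ vanishes at infinity, which is exactly the paper's (terser) argument. The point you flag as delicate is indeed the only nontrivial step, and your resolution of it matches the paper's.
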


\section{Extremal Domains}
\label{s:ED}

In this section we discuss existence and properties of the extremal domain $D^*\in\mathcal{D}_f$ for the function $f$, holomorphic at infinity that can be continued as a multi-valued function to the whole complex plane deprived of a polar set $A$, see  \eqref{analytE}. Recall that the compact set $\Delta:=\partial D^*$ defined in \eqref{minCAP} makes $f$ single-valued in its complement and has minimal logarithmic capacity among all such compacta.

As mentioned in the introduction, the question of existence and characteristic properties of $\Delta$ was settled by Stahl in the most general settings.  Namely, he showed that the following theorem holds \cite[Theorems~1 and~2]{St85} and \cite[Theorem~1]{St85b}.

\begin{stahl}
\label{thm:S}
Let $f \in \mathcal{A}(\overline\C\setminus A)$ with $\cp(A)=0$. Then there exists unique $D^*\in\mathcal{D}_f$, $\Delta=\overline\C\setminus D^*$, the extremal domain for $f$, such that
\[
\cp(\Delta) \leq \cp(\partial D) \quad \mbox{for any} \quad D\in\mathcal{D}_f,
\]
and if $\cp(\Delta)=\cp(\partial D)$ for some $D\in\mathcal{D}_f$, then $D\subset D^*$ and
$\cp(D^*\setminus D)=0$. Moreover, $\Delta:=E_0\cup E_1\cup\bigcup \Delta_k$,
where $E_0\subseteq A$, $E_1$ is a finite set of points, and $\Delta_k$ are open analytic Jordan arcs. Furthermore, it holds that
\begin{equation}
\label{symmetry}
\frac{\partial g_\Delta}{\partial\n_+}=\frac{\partial g_\Delta}{\partial\n_-} \quad \mbox{on} \quad \bigcup \Delta_k,
\end{equation}
where $g_\Delta$ is the Green function for $D^*$ and $\n^\pm$ are the one-sided normals on each $\Delta_k$.
\end{stahl}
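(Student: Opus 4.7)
The plan is to approach this via a potential-theoretic variational method. The three main tasks are (a) producing a minimizer, (b) deriving the S-property \eqref{symmetry}, and (c) reading off the analytic-arc structure from the S-property.

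For existence, I would take a minimizing sequence $\{D_n\}\subset\mathcal{D}_f$ with $\cp(\partial D_n)$ decreasing to $\inf\{\cp(\partial D):D\in\mathcal{D}_f\}$ and extract a weak-$*$ convergent subsequence of the equilibrium measures $\omega_{\partial D_n}$. The associated Green potentials $g_{D_n}(\cdot,\infty)$ form a normal family of harmonic functions on the complement of $A$, so one can pass to a subsequential harmonic limit $g$. The candidate extremal domain is the unbounded connected component $D^*$ of $\{g>0\}$, and one sets $\Delta:=\overline\C\setminus D^*$. Single-valuedness of the continuation of $f$ onto $D^*$ is handled by a monodromy argument: because $\cp(A)=0$, polar obstructions can be discarded, and the homotopy class in $\overline\C\setminus A$ of any loop in $D^*$ is realized by a loop sitting in some $D_n$, along which $f$ already has trivial monodromy.

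For the S-property, I would apply a Hadamard-type domain variation. Deform the smooth part of $\Delta$ by a compactly supported normal shift $\varepsilon\varphi(t)\n_+$ producing $D_\varepsilon\in\mathcal{D}_f$. A standard computation of the variation of the Robin constant gives
\[
\left.\frac{d}{d\varepsilon}\log\cp(\partial D_\varepsilon)\right|_{\varepsilon=0} = \int\varphi(t)\left(\left|\frac{\partial g_\Delta}{\partial\n_+}\right|^2-\left|\frac{\partial g_\Delta}{\partial\n_-}\right|^2\right)|dt|.
\]
Minimality of $\cp(\Delta)$ forces this to vanish for all admissible $\varphi$, hence \eqref{symmetry}. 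The S-property immediately implies that $(\partial_zg_\Delta)^2$ extends holomorphically across the smooth part of $\Delta$ and therefore defines a global meromorphic function on $\overline\C$ with a double pole at $\infty$, singularities only at points of $A$, and only finitely many zeros (the future set $E_1$). Thus $\Delta$ consists of horizontal trajectories of the quadratic differential $(\partial_zg_\Delta)^2\,dz^2$; the classical trajectory structure of meromorphic quadratic differentials on $\overline\C$ then delivers the decomposition $\Delta=E_0\cup E_1\cup\bigcup\Delta_k$, with $E_0\subseteq A$ (prescribed singular trajectory endpoints), $E_1$ finite (critical zeros), and $\Delta_k$ open analytic arcs (regular trajectories).

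Uniqueness follows by a minimum-principle comparison: if $D^\prime\in\mathcal{D}_f$ also achieved the minimum, I would show that $D^*\cup D^\prime$, modulo a polar exceptional set, still lies in $\mathcal{D}_f$ (using $\cp(A)=0$), and then combine $\cp(\partial(D^*\cup D^\prime))\leq\cp(\Delta)$ with the minimum principle for Green potentials to force $D^\prime\subseteq D^*$ off a polar set, yielding $\cp(D^*\setminus D^\prime)=0$. I expect the principal obstacle to be step (a): controlling the monodromy of $f$ under the geometric limit, since the boundaries $\partial D_n$ may oscillate severely near $A$ and the class $\mathcal{D}_f$ is not closed under unions a priori. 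This is where the hypothesis $\cp(A)=0$ together with a careful Hausdorff-type convergence analysis of admissible continua (as in Stahl's treatment) does the decisive work.
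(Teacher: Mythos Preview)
The paper does not prove Theorem~S: it is stated as Stahl's result and attributed to \cite{St85,St85b}. What the paper \emph{does} prove is a restricted version, Propositions~\ref{prop:existence}--\ref{prop:structure}, valid only for finite $A$ and only over the subclass $\mathcal{K}\subsetneq\{\overline\C\setminus D:D\in\mathcal{D}_f\}$ of finite unions of continua each containing at least two branch points. So you should be aware that your proposal is being measured against (i) Stahl's original argument, which the paper merely cites, and (ii) the Perevoznikova--Rakhmanov simplification the paper reproduces.

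Compared with (ii), your route differs at every step. For existence you pass to weak-$*$ limits of equilibrium measures and harmonic limits of Green functions; the paper instead endows $\mathcal{K}$ with the Hausdorff metric and extracts a limit continuum directly, using the diameter bound $\cp(\gamma)\geq\frac14\diam(\gamma)$ and the inclusion $(K_n)_\delta\subset K_n^\epsilon$ to control capacity under $d_H$-convergence. For the variational step you use a Hadamard normal deformation of the arc and the first variation of the Robin constant; the paper uses Schiffer-type interior variations $z\mapsto z+tA(z)/(z-u)$ that fix every branch point, computes the first variation of the energy $I[\omega_\Delta]$, and from $\delta(\omega_\Delta)=0$ obtains directly the algebraic identity $(2\partial_z g_\Delta)^2=B/A$, from which both the trajectory structure and the S-property \eqref{symmetry} are read off simultaneously. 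Your approach recovers \eqref{symmetry} first and then argues analytic continuation of $(\partial_z g_\Delta)^2$; the paper gets the rational expression first and deduces \eqref{symmetry} as a corollary. Finally, the paper does not prove uniqueness at all in its simplified treatment---it remarks that uniqueness follows a posteriori from the strong asymptotics, or directly from \cite[Thm.~2]{St85}.

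Your outline is a reasonable sketch of Stahl's original strategy, and you correctly flag the genuine difficulty: closing the class $\mathcal{D}_f$ under the limit in step (a). In the generality of $\cp(A)=0$ this is delicate and is precisely what the paper sidesteps by restricting to finite $A$ and the compact-friendly class $\mathcal{K}$.
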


Let now $f$ and $A$ be as in \eqref{pointsa}. Denote by $\mathcal{K}$ the collection of all compact sets $K$ such that $K$ is a union of a finite number of disjoint continua each of which contains at least two point from $A$ and $\overline\C\setminus K\in\mathcal{D}_f$. That is,
\[
\mathcal{K}:=\left\{ K:~K= \bigcup_{j=1}^{q<\infty} K_j,~\sharp(A\cap K_j)\ge 2; ~ K_j \setminus \partial K_j = \varnothing; ~ K_j \cap K_i = \varnothing, ~ i\ne j; ~ \overline\C\setminus K\in\mathcal{D}_f\right\}.
\]
Observe that the inclusion $\{\overline\C\setminus K:~K\in\mathcal{K}\}\subset D_f$ is proper.
However, it can be shown using the monodromy theorem (see, for example, \cite[Lemma~8]{BStY12})
that $\Delta\in\mathcal{K}$. Considering only functions with finitely many branch points and
sets in $\mathcal{K}$  allows significantly alter and simplify the proof of
Theorem~\hyperref[thm:S]{S}, \cite[Theorems~2 and~3]{uPerevRakh}.
Although \cite{uPerevRakh} has never been published, generalizations of the method proposed
there were  used to prove extensions of Theorem~\hyperref[thm:S]{S}
for  classes of weighted capacities, see  \cite{KamRakh05}, \cite{M-FRakh11} and \cite{BStY12}.
Below, in a sequence of propositions, we state the simplified version of
Theorem~\hyperref[thm:S]{S} and adduce its proof as devised in \cite{uPerevRakh}
solely for the completeness of the exposition.

\begin{proposition}
\label{prop:existence}
There exists $\Delta\in\mathcal{K}$ such that $\cp(\Delta)\leq\cp(K)$ for any $K\in\mathcal{K}$.
\end{proposition}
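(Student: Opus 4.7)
The plan is the classical direct method: extract a minimizer as a Hausdorff subsequential limit of a minimizing sequence. Set $c_0 := \inf_{K \in \mathcal{K}} \cp(K)$. One first checks $0 < c_0 < \infty$. Finiteness holds because $\mathcal{K}$ is nonempty: since the monodromy of $f$ is generated by the finitely many local monodromies around points of $A$, one can exhibit explicit cut systems (e.g., a finite tree of analytic arcs joining the branch points in a way that cancels monodromies) belonging to $\mathcal{K}$. Positivity holds because each $K \in \mathcal{K}$ has every component a continuum containing at least two points of $A$, so $\cp(K) \geq \tfrac{1}{4}\min_{a\neq b \in A}|a-b| > 0$ via the bound $\cp(C) \geq \tfrac{1}{4}\diam(C)$.

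Next I would take a minimizing sequence $K_n \in \mathcal{K}$ with $\cp(K_n) \to c_0$. Applying $\diam(C) \leq 4\cp(C) \leq 4\cp(K_n)$ componentwise, and using that each component meets the bounded set $A$, confines all $K_n$ to a fixed compact disk. Since each $K_n$ has at most $\sharp A/2$ components, after passing to a subsequence the number of components $q$ is constant and, by iterated Blaschke selection, each component $C_{n,j}$ converges in Hausdorff distance to a continuum $C_j$. Set $K := \bigcup_{j=1}^q C_j$. Each $C_j$ contains at least two points of $A$ (by closedness of $A$), and merging of several $C_j$'s into a single connected component of $K$ preserves this property. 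Then I would verify $\overline\C\setminus K \in \mathcal{D}_f$: single-valuedness of $f$ on $\overline\C\setminus K$ follows by a monodromy argument, since any compact loop $\gamma \subset \overline\C\setminus K$ has positive distance to $K$ and hence to $K_n$ for $n$ large, so its monodromy under $f$ is trivial. Connectedness of $\overline\C\setminus K$ is either automatic or is arranged by passing to the complement of the unbounded component of $\overline\C\setminus K$, noting that a true minimizer cannot enclose bounded complementary regions (any encapsulated region could be removed without affecting monodromy while decreasing capacity).

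The remaining step is $\cp(K) = c_0$. One direction, $\cp(K) \geq c_0$, is automatic since $K \in \mathcal{K}$. The reverse inequality $\cp(K) \leq c_0$ is the main obstacle and is the heart of the existence argument. Capacity is only upper semicontinuous under Hausdorff convergence in general---witness the classical example of $n$ equidistributed points on the unit circle $\T$, whose capacity $0$ does not match $\cp(\T) = 1$---so lower semicontinuity cannot be invoked blindly. In our setting, however, the uniform bound on the number of components of the $K_n$ restores continuity of capacity in the Hausdorff limit. This can be established, for instance, via locally uniform convergence of the Green functions $g_{K_n} \to g_K$ on $\overline\C\setminus K$, which forces convergence of the associated Robin constants, i.e.\ of $-\log\cp(K_n)$ to $-\log\cp(K)$; equivalently, one can work with weak-$*$ limits of the equilibrium measures $\omega_{K_n}$ and exploit the lower semicontinuity of the logarithmic energy together with the connectedness of the limiting supports. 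Combining this with $\cp(K) \geq c_0$ gives $\cp(K) = c_0$, so that $K$ (possibly after the modification of the previous paragraph) is the desired minimizer.
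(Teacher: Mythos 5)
Your setup (minimizing sequence, confinement to a disk via $\cp(\gamma)\ge\tfrac14\diam(\gamma)$, Hausdorff/Blaschke compactness, passage to the polynomial convex hull, and the monodromy argument for membership in $\mathcal{D}_f$) matches the paper's proof and is fine. The genuine gap is in the step you yourself flag as ``the heart of the existence argument'': the inequality $\cp(K)\le\liminf_n\cp(K_n)$ is asserted, not proved. You say that the uniform bound on the number of components ``restores continuity of capacity'' and that this ``can be established, for instance, via locally uniform convergence of the Green functions $g_{K_n}\to g_K$,'' but that convergence is precisely the nontrivial content of the proposition: it fails for general Hausdorff-convergent sequences (your own circle example, or thin arcs accumulating on $\T$), and with the structural hypothesis (boundedly many continua of diameter bounded below) it is true but requires an argument of essentially the same depth as the statement being proved. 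Invoking it as a known fact leaves the proof incomplete. Worse, your alternative route goes the wrong way: weak-$*$ limits of $\omega_{K_n}$ together with lower semicontinuity of the energy (Principle of Descent) give $I[\omega_K]\le I[\mu]\le\liminf I[\omega_{K_n}]$, i.e. $\limsup_n\cp(K_n)\le\cp(K)$ --- the trivial direction --- and no amount of ``connectedness of the limiting supports,'' as stated, converts it into the needed bound.

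For comparison, the paper closes exactly this gap quantitatively. It shows that for any continuum $\gamma$ with at least two points, $\dist\big(\gamma,\{g_\gamma=\log(1+\epsilon)\}\big)\ge\tfrac{\epsilon^2}{2}\cp(\gamma)$, proved via Goluzin's distortion theorem for the exterior conformal map, see \eqref{rakhper}. Since every component of $K_n$ contains two points of $A$, their capacities are bounded below by some $c_0>0$, and the maximum principle then yields the inclusion $(K_n)_\delta\subset K_n^\epsilon:=\{g_{K_n}\le\log(1+\epsilon)\}$ for $\delta<\epsilon^2c_0/2$, as in \eqref{claimincl}. Because $\cp(K_n^\epsilon)=(1+\epsilon)\cp(K_n)$ and $K\subset(K_n)_\delta$ for $n$ large, monotonicity gives $\cp(K)\le(1+\epsilon)\cp(K_n)$, and letting $n\to\infty$, $\epsilon\to0$ finishes the argument. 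Something of this quantitative nature (a uniform bound on the capacity of $\delta$-neighborhoods, or a uniform boundary regularity estimate implying Green-function convergence) is what your proposal must supply to be complete. A minor further point: filling in the bounded complementary components does not \emph{decrease} capacity, it preserves it (\cite[Theorem~5.2.3]{Ransford}); the relevant facts are that the hull lies in $\mathcal{K}$ and has the same capacity, not that a minimizer ``cannot enclose'' such regions.
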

\begin{proof}
Let $\{K_n\}$ be a sequence in $\mathcal{K}$ such that
\[
\cp(K_n)\to \inf_{K\in\mathcal{K}}\cp(K)=:c \quad \mbox{as} \quad n\to\infty.
\]
Then there exists $R>0$ such that $K_n\subset\D_R:=\{z:~|z|<R\}$ for all $n$ large enough. Indeed, it is known \cite[Theorem~5.3.2]{Ransford} that $\cp(\Delta)\geq\cp(\gamma)\geq\frac14\diam(\gamma)$, where $\gamma$ is any continuum in $K_n$ and $\diam(\gamma)$ is the diameter of $\gamma$. As $\gamma$ contains at least two points from $A$, the claim follows.

For any $K\in\mathcal{K}_R:=\mathcal{K}\cap\overline\D_R$ and $\epsilon>0$, set $(K)_\epsilon:=\{z:\dist(z,K)<\epsilon\}$.  We endow $\mathcal{K}_R$ with the Hausdorff metric, i.e.,
\[
d_H(K_1,K_2) := \inf\{\epsilon:K_1\subset(K_2)_\epsilon,K_2\subset(K_1)_\epsilon\}.
\]
By standard properties of the Hausdorff distance \cite[Section 3.16]{Dieudonne}, $\clos_{d_H}(\mathcal{K}_R)$, the closure of $\mathcal{K}_R$ in the $d_H$-metric, is a compact metric space. Notice that a compact set which is the $d_H$-limit of a sequence of continua is itself a continuum. Observe also that the process of taking the $d_H$-limit cannot increase the number of the connected components since the $\epsilon$-neighborhoods of the components of the limiting set will become disjoint as $\epsilon\to0$. Thus, each element of $\clos_{d_H}(\mathcal{K}_R)$ still consists of a finite number of continua each containing at least two points from $A$ but possibly with multiply connected complement. However, the polynomial convex hull of such a set, that is, the union of the set with the bounded components of its complement, again belongs to $\mathcal{K}_R$ and has the same logarithmic capacity \cite[Theorem~5.2.3]{Ransford}.

Let $\Delta^*\in\clos_{d_H}(\mathcal{K}_R)$ be a limit point of $\{K_n\}$. In other words, $d_H(\Delta^*,K_n)\to0$ as $n\to\infty$, $n\in\N_1\subseteq\N$. We shall show that
\begin{equation}
\label{cpc}
\cp(\Delta^*)=c.
\end{equation}
To this end, denote by $K^\epsilon:=\{z:~g_K(z)\leq\log(1+\epsilon)\}$, $\epsilon>0$, where $g_K$ is the Green function with pole at infinity for the complement of $K$. It can be easily shown \cite[Theorem~5.2.1]{Ransford} that
\begin{equation}
\label{cpepsilon}
\cp(K^\epsilon)=(1+\epsilon)\cp(K).
\end{equation}
Put $c_0:=\inf\{\cp(\gamma)\}$, where the infimum is taken over all connected components $\gamma$ of $K_n$ and all $n\in\N_1$. Recall that each component $\gamma$ of any $K_n$ contains at least two points from $A$. Thus, it holds that $c_0>0$ since $\cp(\gamma)\geq\frac14\diam(\gamma)$.

We claim that for any $\epsilon\in(0,1)$ and $\delta<\epsilon^2c_0/2$ we have that
\begin{equation}
\label{claimincl}
(K_n)_\delta\subset K_n^\epsilon
\end{equation}
for all $n$ large enough. Granted the claim, it holds by \eqref{cpepsilon} that
\begin{equation}
\label{conseqclaim}
\cp(\Delta^*)\leq(1+\epsilon)\cp(K_n)
\end{equation}
since $\Delta^*\subset(K_n)_\delta\subset K_n^\epsilon$. Thus, by taking the limit as $n$ tends to infinity in \eqref{conseqclaim}, we get that
\begin{equation}
\label{conclclaim}
c\leq\cp(\Delta^*)\leq(1+\epsilon)c,
\end{equation}
where the lower bound follows from the very definition of $c$ since the polynomial convex hull of $\Delta^*$, say $\Delta$, belongs to $\mathcal{K}$ and has the same capacity as $\Delta^*$. As $\epsilon$ was arbitrary, \eqref{conclclaim} yields \eqref{cpc} with $\Delta$ as above.

It only remains to prove \eqref{claimincl}. We show first that for any continuum $\gamma$ with at least two points, it holds that
\begin{equation}
\label{rakhper}
\dist(\gamma,\widetilde\gamma^\epsilon) \geq \frac{\epsilon^2}{2}\cp(\gamma),
\end{equation}
where $\widetilde\gamma^\epsilon:=\{z:~g_{\gamma}(z)=\log(1+\epsilon)\}$. Let $\Psi$ be a conformal map of $\{z:~|z|>1\}$ onto $\overline\C\setminus\gamma$, $\Psi(\infty)=\infty$. It can be readily verified that $|\Psi(z)z^{-1}|\to\cp(\gamma)$ as $z\to\infty$ and that $g_\gamma=\log|\Psi^{-1}|$, where $\Psi^{-1}$ is the inverse of $\Psi$. Then it follows from \cite[Theorem~IV.2.1]{Goluzin} that
\begin{equation}
\label{goluzin}
|\Psi^\prime(z)| \geq \cp(\gamma)\left(1-\frac{1}{|z|^2}\right), \quad |z|>1.
\end{equation}
Let $z_1\in\gamma$ and $z_2\in\widetilde\gamma^\epsilon$ be such that $\dist(\gamma,\widetilde\gamma^\epsilon)=|z_1-z_2|$. Denote by $[z_1,z_2]$ the segment joining $z_1$ and $z_2$.  Observe that $\Psi^{-1}$ maps the annular domain bounded by $\gamma$ and $\widetilde\gamma^\epsilon$ onto the annulus $\{z:1<|z|<1+\epsilon\}$. Denote by $S$ the intersection of $\Psi^{-1}((z_1,z_2))$ with this annulus. Clearly, the angular projection of $S$ onto the real line is equal to $(1,1+\epsilon)$. Then
\begin{eqnarray}
\dist(\gamma,\widetilde\gamma^\epsilon) &=& \int_{(z_1,z_2)}|dz| = \int_{\Psi^{-1}((z_1,z_2))}|\Psi^\prime(z)||dz| \geq \cp(\gamma)\int_{\Phi^{-1}((z_1,z_2))}\left(1-\frac{1}{|z|^2}\right)|dz| \nonumber \\
{} &\geq& \cp(\gamma)\int_{S}\left(1-\frac{1}{|z|^2}\right)|dz| \geq \cp(\gamma)\int_{(1,1+\epsilon)}\left(1-\frac{1}{|z|^2}\right)|dz| = \frac{\epsilon^2\cp(\gamma)}{1+\epsilon}, \nonumber
\end{eqnarray}
where we used \eqref{goluzin}. This proves \eqref{rakhper} since it is assumed that $\epsilon\leq1$.

Now, let $\gamma_n$ be a connected component of $K_n$ such that $\dist(K_n,\widetilde K_n^\epsilon)=\dist(\gamma_n,\widetilde K_n^\epsilon)$. By the maximal principle for harmonic functions, it holds that $g_{\gamma_n}>g_{K_n}$ for $z\notin K_n$, and therefore, $\gamma_n^\epsilon\subset K_n^\epsilon$. Thus,
\[
\dist(K_n,\widetilde K_n^\epsilon)\geq\dist(\gamma_n,\widetilde\gamma_n^\epsilon)\geq \frac{\epsilon^2c_0}{2}
\]
by \eqref{rakhper} and the definition of $c_0$. This finishes the proof of the proposition.
\end{proof}

Let $\Delta$ be as in Proposition~\ref{prop:existence}. Observe right away that $\Delta$ has no interior as otherwise there would exist $\Delta^\prime\subset\Delta$ with smaller logarithmic capacity which still belongs to $\mathcal{K}$. It turns out that $g_\Delta$ has a rather special structure that we describe in the following proposition which was initially proven in this form in \cite[Theorem~3]{uPerevRakh} (the method of proof in a more general form was also used in \cite{M-FRakh11}).

\begin{proposition}
\label{prop:greenfun}
Let $\Delta$ be as in Proposition~\ref{prop:existence}. Then
\begin{equation}
\label{greendelta}
g_\Delta(z) = \re\left(\int_{a_1}^z\sqrt\frac{B(\tau)}{A(\tau)}d\tau\right),
\end{equation}
where $A$ was defined in \eqref{functionh}, $B$ is a monic polynomial of degree $p-2$, and the root is chosen so that $z\sqrt{A(z)/B(z)}\to1$ as $z\to\infty$.
\end{proposition}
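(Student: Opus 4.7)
My plan is to prove the formula in three stages: establish the $S$-property of $\Delta$ via a variational argument, deduce that $h:=2\partial_z g_\Delta$ is a branch of an algebraic function with rational square, and pin down the precise form $h^2=B/A$ by examining local behavior at the endpoints of $\Delta$ and at infinity.

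For the first stage, let $\mathbf{v}$ be a smooth compactly supported real vector field on $\overline\C$ that vanishes in a neighborhood of $A$, and set $\Delta_t:=\Phi_t(\Delta)$ where $\Phi_t$ is the flow of $t\mathbf{v}$. For all small $|t|$, the deformed set $\Delta_t$ still lies in $\mathcal{K}$: its connected components contain the same points of $A$ as those of $\Delta$, and $\overline\C\setminus\Delta_t\in\mathcal{D}_f$ by the monodromy theorem. Extremality of $\Delta$ together with the Hadamard-type variation formula
\[
\frac{d}{dt}\log\cp(\Delta_t)\Big|_{t=0}=-\frac{1}{2\pi}\int_{\bigcup\Delta_k}\Bigl[\Bigl(\frac{\partial g_\Delta}{\partial\n_+}\Bigr)^2-\Bigl(\frac{\partial g_\Delta}{\partial\n_-}\Bigr)^2\Bigr](\mathbf{v}\cdot\n)\,ds
\]
then force the bracketed difference to vanish pointwise, yielding \eqref{symmetry}. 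Since $g_\Delta\equiv 0$ on $\Delta$, the gradient $\nabla g_\Delta$ is purely normal on each arc and points outward from $\Delta$ on both sides; hence this is equivalent to the jump relation $h^++h^-=0$ on the smooth part of each $\Delta_k$.

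For the second stage, the jump relation implies that $h^2$ has continuous boundary values across the smooth parts of $\Delta$, and therefore extends meromorphically to $\overline\C\setminus E$. Near an endpoint $e\in E$ the analyticity of the arcs together with the $S$-property forces them to meet at an odd number $n$ of equal angles; a local uniformization by $(z-e)^{n/2}$ then gives $g_\Delta\sim c|z-e|^{n/2}$, whence $h^2$ has a simple pole at $e$ if $n=1$ and a zero of order $n-2$ if $n\geq 3$. A separate extremality argument excludes single-arc endpoints outside $A$: such an endpoint could be locally retracted along the arc, producing a strictly smaller contour still in $\mathcal{K}$, in contradiction with minimality of $\cp(\Delta)$. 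Consequently the poles of $h^2$ are simple and occur precisely at $\{a_1,\ldots,a_p\}:=A\cap E$, so that $h^2=B/A$ for a polynomial $B$ with $A$ as in \eqref{functionh}.

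Finally, the expansion $g_\Delta(z)=\log|z|+O(1/|z|)$ at infinity yields $h(z)=1/z+O(1/z^2)$, so $h^2\sim 1/z^2$. This forces $B$ to be monic of degree $p-2$, and selects the branch of the square root via $z\sqrt{A/B}\to 1$ at infinity. Integrating $h\,d\tau$ from $a_1\in\Delta$ along any path in $D^*$ then produces a (multi-valued) holomorphic antiderivative whose real part is single-valued (the periods of $h$ being purely imaginary because $g_\Delta\equiv 0$ on $\Delta$) and coincides with $g_\Delta-g_\Delta(a_1)=g_\Delta$, which is \eqref{greendelta}. The main obstacle is the first stage: carefully justifying the Hadamard variation formula in this multi-component, non-Hermitian setting, and verifying that the admissible perturbations $\Delta_t$ indeed remain in $\mathcal{K}$ for all small $t$.
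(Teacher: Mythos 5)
Your outline follows the classical Stahl-type route (first the $S$-property, then algebraicity of $h^2$), but as written it rests on structural information about $\Delta$ that is not available at this stage and is, in the paper's logic, a \emph{consequence} of the formula you are proving. Proposition~\ref{prop:existence} only produces a minimizer that is a finite union of continua, each containing at least two points of $A$; it gives no finite exceptional set $E$, no decomposition into analytic (or even smooth) arcs, no one-sided normal derivatives of $g_\Delta$, and no equal-angle local picture at endpoints. All of that is established in Proposition~\ref{prop:structure}, whose proof uses \eqref{greendelta} (identifying $\Delta$ with critical trajectories of the quadratic differential $(-B/A)(z)dz^2$). Your Stage~1 (a Hadamard variation integrated over $\bigcup\Delta_k$ against $\partial g_\Delta/\partial\n_\pm$) and Stage~2 (continuity of $h^\pm$ up to the arcs, meromorphic continuation across them, endpoint exponents read off from the equal-angle configuration, finiteness of $E$) presuppose exactly this regularity, so the argument is circular unless you first prove the a priori structure of the extremal compact by independent means -- which is the genuinely hard part of Stahl's original papers and is not covered by the ``main obstacle'' you flag, which concerns only the variational formula itself. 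A smaller inaccuracy: without Condition~GP (not assumed in this proposition) the arcs need not meet at an odd number of equal angles; $B$ may have zeros of even order lying on $\Delta$, as in Figure~\ref{fig:GP}, so the local exponent analysis as stated is also not quite right.

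The paper's proof is designed precisely to avoid any regularity input: it varies $\Delta$ by the rational maps $z\mapsto z+tA(z)/(z-u)$, which fix the branch points so that $\Delta^t\in\mathcal{K}$, expands the equilibrium energy to first order in $t$, and deduces from minimality that $\delta(\omega_\Delta)=0$ for every admissible $u$, i.e. \eqref{deltaeqzero}; the polynomial identity \eqref{longpoly} then shows directly that $A(u)\bigl(\int d\omega_\Delta(\tau)/(\tau-u)\bigr)^2$ is a monic polynomial of degree $p-2$, which is \eqref{BoverA}, and \eqref{greendelta} follows from $2\partial_zg_\Delta(z)=\int d\omega_\Delta(\tau)/(z-\tau)$. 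The $S$-property \eqref{symmetry} and the arc structure \eqref{Delta} then come out afterwards as corollaries rather than being inputs. If you wish to keep your route, you must first develop the regularity theory for the minimizer (finitely many analytic arcs, boundary smoothness of $g_\Delta$, endpoint behavior) independently, which is a substantial additional undertaking.
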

\begin{proof}
Denote by $\omega_K$ the equilibrium measure of a compact set $K$ and by $I[\mu]$ the logarithmic energy of a compactly supported measure $\mu$, i.e.,
\[
I[\mu] = -\iint\log|z-\tau|d\mu(z)d\mu(\tau).
\]
Then it is known that
\[
g_\Delta(z) = I[\omega_\Delta] +\int\log|z-\tau|d\omega_\Delta(\tau),
\]
which immediately implies that
\begin{equation}
\label{partialzg}
(\partial_zg_\Delta)(z) = \frac12\int\frac{d\omega_\Delta(\tau)}{z-\tau},
\end{equation}
where $\partial_z:=(\partial_x-i\partial_y)/2$. Since $g_\Delta\equiv0$ on $\Delta$, it holds that
\[
g_\Delta(z) = \re\left(2\int_{a_1}^z(\partial_zg_\Delta)(\tau)d\tau\right)
\]
for any the path of integration in $D$. Thus, to prove \eqref{greendelta}, we need to show that
\begin{equation}
\label{BoverA}
\frac{B(z)}{A(z)} = \left(\int\frac{d\omega_\Delta(\tau)}{z-\tau}\right)^2
\end{equation}
for some monic polynomial $B$, $\deg(B)=p-2$.

Let $O$ be a neighborhood of $\Delta$. Define
\[
\delta(z) := \frac{A(z)}{z-u}, \quad u\notin\overline O.
\]
Then $\delta$ generates a local variation of $O$ according to the rule $z\mapsto z^t:=z+t\delta(z)$, where $t$ is a complex parameter. Since
\begin{equation}
\label{subs}
\left|w^t-z^t\right|= |w-z|\left|1+t\frac{\delta(w)-\delta(z)}{w-z}\right|,
\end{equation}
this transformation is injective for all $|t|\leq t_0<M$, where
\begin{equation}
\label{M}
M:=\max_{w,z\in\overline O}|(\delta(w)-\delta(z))/(w-z)|<\infty.
\end{equation}
Moreover, the transformation $\delta$ naturally induces variation of sets in $\overline O$, $E\mapsto E^t=\{z^t:z\in E\}$, and measures supported in $\overline O$, $\mu\mapsto\mu^t$, $\mu^t(E^t)=\mu(E)$.

Let $\mu$ be a positive measure supported in $\overline O$ with finite logarithmic energy $I[\mu]$. Observe that the pull-back measure $\mu^t$ satisfies the following substitution rule: $d\mu^t(z^t)=d\mu(z)$. Then it follows from \eqref{subs} that
\begin{eqnarray}
I[\mu^t]-I[\mu] &=& -\iint\log\left|1+t\frac{\delta(w)-\delta(z)}{w-z}\right|d\mu(z)d\mu(w) \nonumber \\
\label{diffenergy1}
{} &=& -\re\left[\iint\log\left(1+t\frac{\delta(w)-\delta(z)}{w-z}\right)d\mu(z)d\mu(w)\right]
\end{eqnarray}
for all $|t|\leq t_0$. Since the argument of the logarithm in \eqref{diffenergy1} is less than 2 in modulus, it holds that
\begin{equation}
\label{diffenergy2}
I[\mu^t]-I[\mu] = -\re\left[t\delta(\mu)+O(t^2)\right]
\end{equation}
for all $|t|\leq t_0$, where
\[
\delta(\mu) := \iint\frac{\delta(w)-\delta(z)}{w-z}d\mu(z)d\mu(w).
\]

Let now $\{\mu_t\}$ be a family of measures on $\Delta$ such that $(\mu_t)^t=\omega_{\Delta^t}$. Then
\begin{equation}
\label{convdeltas}
\delta(\mu_t)\to\delta(\omega_\Delta) \quad \mbox{as} \quad t\to0.
\end{equation}
Indeed, by the very definition of the equilibrium measure it holds that the differences $I[\mu_t]-I[\omega_\Delta]$ and $I[\omega_\Delta^t]-I[\omega_{\Delta^t}]$ are non-negative. Thus,
\begin{eqnarray}
0 \leq I[\mu_t]-I[\omega_\Delta] &=& I[\mu_t]-I[\omega_{\Delta^t}]+I[\omega_{\Delta^t}]-I[\omega_\Delta] \nonumber  \\
{} &\leq& I[\mu_t]-I[\omega_{\Delta^t}]+I[\omega_\Delta^t]-I[\omega_\Delta] \nonumber \\
\label{diffenergy3}
{} &=& \re\left[t(\delta(\mu_t)-\delta(\omega_\Delta))+O(t^2)\right]
\end{eqnarray}
for $|t|\leq t_0$ by \eqref{diffenergy2}. Clearly, $|\delta(\mu)|\leq M|\mu|$ by \eqref{M}, where $|\mu|$ is the total variation of $\mu$. Since $\mu_t$ and $\omega_\Delta$ are positive measures of unit mass, \eqref{diffenergy3} implies that $I[\mu_t]\to I[\omega_\Delta]$ as $t\to 0$. The latter yields that $\mu_t\cws\omega_\Delta$ by the uniqueness of the equilibrium measure,\footnote{The measure $\omega_\Delta$ is the unique probability measure that minimizes energy functional $I[\cdot]$ among all probability measures supported on $\Delta$. As any weak limit point of $\{\mu_t\}$ has the same energy as $\omega_\Delta$ by the Principle of Descent \cite[Theorem~I.6.8]{SaffTotik} and \eqref{diffenergy3}, the claim follows.} which immediately implies \eqref{convdeltas} by the very definition of weak$^*$ convergence.

Now, observe that $a_k^t=a_k$ for any $k\in\{1,\ldots,p\}$. Hence, $\Delta^t\in\mathcal{K}$ for all $|t|\leq t_0$. In particular, this means that $\cp(\Delta^t)\geq\cp(\Delta)$ and therefore $I[\omega_{\Delta^t}]\leq I[\omega_\Delta]$ as $\cp(K)=\exp\{-I[\omega_K]\}$. Thus, it holds that
\begin{equation}
\label{diffenergy4}
0 \leq I[\omega_\Delta] - I[\omega_{\Delta^t}] \leq I[\mu_t] - I[\omega_{\Delta^t}] = \re\left[t\delta(\mu_t)+O(t^2)\right] = \re\left[t\delta(\omega_\Delta)+o(t)\right]
\end{equation}
by \eqref{diffenergy2} and \eqref{convdeltas}. Clearly, \eqref{diffenergy4} is positive only if
\begin{equation}
\label{deltaeqzero}
\delta(\omega_\Delta)=0.
\end{equation}

In another connection, observe that there exists a polynomial in $u$, say
\[
B(u;z,w) = a_0(z,w) + a_1(z,w) u + \cdots + a_{p-3}(z,w) u^{p-3} + u^{p-2},
\]
where each $a_k(z,w)$ is a polynomial in $z$ and $w$, such that
\begin{equation}
\label{longpoly}
(w-u)A(z) - (z-u)A(w) + (z-w)A(u) = (z-w)(z-u)(w-u)B(u;z,w).
\end{equation}
Indeed, the left hand side of \eqref{longpoly} is a polynomial of degree $p$ in each of the variables $z,w,u$ that vanishes when $u=z$, $u=w$, and $z=w$. Then
\[
\frac{\delta(z)-\delta(w)}{z-w} =  \frac{A(z)}{(z-u)(z-w)} - \frac{A(w)}{(w-u)(z-w)} = B(u;z,w) - \frac{A(u)}{(z-u)(w-u)}
\]
by \eqref{longpoly}. So, we have by the definition of $\delta(\omega_\Delta)$ and \eqref{deltaeqzero} that
\[
B(u):=\int B(u;z,w)d\omega_\Delta(z)d\omega_\Delta(w) = A(u)\iint\frac{d\omega_\Delta(z)d\omega_\Delta(w)}{(z-u)(w-u)} = A(u)\left(\int\frac{d\omega_\Delta(\tau)}{\tau-u}\right)^2,
\]
which shows the validity of \eqref{BoverA} and respectively of \eqref{greendelta}.
\end{proof}

Having Proposition~\ref{prop:greenfun}, we can describe the structure of a set $\Delta$ as it was done in \cite{St85b} with the help of the critical trajectories of a quadratic differential \cite[Section~8]{Pommerenke2}. Recall that a \emph{quadratic differential} is the expression of the form $Q(z)dz^2$, where $Q$ is a meromorphic function in some domain. We are interested only in the case where $Q$ is a rational function.

A trajectory of the quadratic differential $Q(z)dz^2$ is a smooth (in fact, analytic) maximal Jordan arc or curve such that $Q(z(t))(z^\prime(t))^2>0$ for any parametrization. The zeros and poles of the differential are called \emph{critical points}.  The zeros and simple poles of the differential are called \emph{finite critical points} (the order of the point at infinity is equal to the order of $Q$ at infinity minus 4; for instance, if $Q$ has a double zero at infinity, then $Q(z)dz^2$ has a double pole there).

A trajectory is called \emph{critical} if it joins two not necessarily distinct critical points and at least one of them is finite. A trajectory is called \emph{closed} if it is a Jordan curve and is called \emph{recurrent} if its closure has non-trivial planar Lebesgue measure (such a trajectory is not a Jordan arc or a curve). A differential is called \emph{closed} if it has only critical and closed trajectories.

If $e$ is a finite critical point of order $k$, then there are $k+2$ critical trajectories emanating from $e$ under equally spaced angles. If $e$ is a double pole and the differential has a positive residue at $e$, then there are no trajectories emanating from $e$ and the trajectories around $e$ are closed, that is, they encircle $e$.

\begin{proposition}
\label{prop:structure}
Let $\Delta$ be as in Proposition~\ref{prop:existence} and the polynomials $A,B$ be as in Proposition~\ref{prop:greenfun}. Then \eqref{Delta} holds with $\bigcup \Delta_k$ being the union of the non-closed critical trajectories of the closed quadratic differential $(-B/A)(z)dz^2$ and $E$ being the symmetric difference of the set $\{a_1,\ldots,a_p\}$ and the set of those zeros of $B$ that belong to the closure of $\bigcup \Delta_k$. The remaining zeros of $B$, say $\{b_1,\ldots,b_q\}$, are of even order and of total multiplicity $2(m-1)$, where $m$ is the number of connected components of $\Delta$. Furthermore, \eqref{symmetry} holds.
\end{proposition}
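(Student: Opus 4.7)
The plan is to read the structure of $\Delta$ directly off the closed formula for $g_\Delta$ of Proposition~\ref{prop:greenfun}. Setting $h(z) := \sqrt{B(z)/A(z)}$ with the branch fixed by $zh(z)\to 1$, so that $h = 2\partial_z g_\Delta$ in $D^*$, and $Q := -B/A$, the identity $g_\Delta\equiv 0$ on $\Delta$ forces $h(z)\,dz \in i\R$ at every smooth point $z\in\Delta$, equivalently $Q(z)(dz)^2 > 0$. Hence the smooth part of $\Delta$ is traced by trajectories of the quadratic differential $Q(z)\,dz^2$. The first task is to verify that this differential is \emph{closed}, i.e., admits no recurrent trajectories. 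Its critical set consists of the simple poles at the $a_k$ (each absorbing exactly one trajectory end), the zeros of $B$ (a zero of order $k$ emitting $k+2$ trajectories at equal angles), and a double pole at infinity whose residue is $-1$ (from $Q(z)\sim-z^{-2}$); that negative residue forces the trajectories around infinity to be closed Jordan curves, which together with Jenkins' basic structure theorem for rational quadratic differentials precludes recurrent behaviour.

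With closedness in hand, I would argue that $\Delta$ is built of critical trajectories only. Proposition~\ref{prop:existence} rules out closed trajectories inside $\Delta$, since such a trajectory fills an annular domain of positive area while $\Delta$ has empty interior. Each connected component of $\Delta$ contains at least two points of $A$ by the definition of $\mathcal{K}$, and since $h\,dz$ restricted to a trajectory has vanishing real part, $g_\Delta$ is constant along trajectories; the critical trajectories emanating from points of $\Delta$ leave the level $\{g_\Delta=0\}$ only by reaching another critical point on that same level. This yields the decomposition \eqref{Delta}, with the $\Delta_k$ open analytic arcs (by the standard local normal forms for trajectories at regular and critical points) and $E$ the set of endpoints: points of $A$ incident with $\overline{\bigcup\Delta_k}$ together with those zeros of $B$ on $\overline{\bigcup\Delta_k}$ whose critical character is not cancelled by an $a_k$ -- precisely the symmetric difference described in the statement.

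For the zeros of $B$ off $\Delta$, the function $h$ is single-valued and holomorphic in $D^*$ with $h^2 = B/A$, and $A$ vanishes only on $\Delta$; hence every zero of $B$ in $D^*$ must be of even order. To pin the total even multiplicity to $2(m-1)$ I would run an Euler-type count on each of the $m$ components. Since $D^*$ is connected, each component of $\Delta$ is a tree of trajectory arcs; let $p_j$ be the number of $A$-vertices and $\{k_b\}$ the orders of the $B$-zeros in the $j$-th component. An $A$-vertex has vertex degree $1$ (simple pole of $Q$), while a $B$-zero of order $k$ on $\Delta$ has vertex degree $k+2$, all of its emanating critical trajectories necessarily lying in $\Delta$ by the constancy of $g_\Delta$ along trajectories. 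The tree identity $|\mathrm{edges}| = |\mathrm{vertices}| - 1$ together with $2\,|\mathrm{edges}| = \sum\deg$ then yields $\sum_b k_b = p_j - 2$; summing over $j$ gives total order $p-2m$ for the $B$-zeros on $\Delta$, and subtracting from $\deg B = p-2$ leaves $2(m-1)$ for the off-$\Delta$ total multiplicity.

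Finally, \eqref{symmetry} is immediate: on each smooth arc $\Delta_k$ the tangential derivative of $g_\Delta$ vanishes, so $\partial g_\Delta/\partial\n_\pm = 2|(\partial_z g_\Delta)^\pm| = |h^\pm|$, while $(h^\pm)^2 = B/A$ forces $h^+ = -h^-$ on $\Delta_k$ and hence $|h^+| = |h^-|$. The main obstacle I foresee is the tree count in the third paragraph: one must establish via the constancy of $g_\Delta$ along trajectories that no critical trajectory emanating from a vertex in $\Delta$ escapes into $D^*$, and carefully bookkeep vertex orders in possibly multiply-connected configurations. Once these two points are settled, the decomposition, the even-order property, the total multiplicity $2(m-1)$, and the symmetry all fall out of the quadratic-differential picture.
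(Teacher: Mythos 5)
Your overall route is the same as the paper's: read the structure of $\Delta$ off the quadratic differential $(-B/A)(z)dz^2$, use that its trajectories are level lines of $g_\Delta$, exclude closed trajectories from $\Delta$ by topology, get even order of the off-$\Delta$ zeros of $B$ from single-valuedness of $h=\sqrt{B/A}$ in $D^*$, obtain the multiplicity count $2(m-1)$ by a combinatorial count on each component (the paper does this by an inductive ``attach arcs and recount'' process, your Euler/handshake identity on a tree is an equivalent and in fact cleaner bookkeeping), and prove \eqref{symmetry} by the same computation with $n_\pm$ and $h^\pm$. Two steps, however, are not justified as written. First, your argument for closedness of the differential is invalid: the circle structure of trajectories around the double pole at infinity does not preclude recurrent (density) trajectories, and Jenkins' three-pole theorem does not apply once $\sharp A\geq3$ (here there are $p$ simple poles plus the pole at infinity). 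The correct reason -- and the one the paper uses -- is the observation you yourself make in your second paragraph: by \eqref{greendelta} the trajectories are level curves of the single-valued function $g_\Delta$, and level sets of a nonconstant harmonic function cannot be dense in an open set; this, not the residue at infinity, rules out recurrence. Relatedly, your exclusion of closed trajectories from $\Delta$ (``fills an annular domain of positive area while $\Delta$ has empty interior'') is garbled: a single closed trajectory is a Jordan curve of zero area; the correct argument is that a Jordan curve inside $\Delta$ would disconnect $\overline\C\setminus\Delta$ (using that $\Delta$ has empty interior), contradicting connectedness of the complement, which is how the paper argues.

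Second, your tree count assumes every $A$-vertex has degree $1$ (``simple pole of $Q$''). That is exactly the genericity that Condition~GP imposes, but Proposition~\ref{prop:structure} is a general structure statement and must cover the degenerate constellations in which some $a_k$ is simultaneously a zero of $B$ -- this is precisely why $E$ is described as a \emph{symmetric difference} (e.g.\ the collinear case, where most zeros of $B$ sit at points of $A$ and those points are interior to $\Delta$). In that case the vertex degree at such an $a_k$ is $\mu+1$, where $\mu$ is the order of $B$ there, not $1$. The count is easily repaired: with degrees $1$ at plain $A$-points, $\mu+1$ at $A$-points that are $B$-zeros of order $\mu$, and $\nu+2$ at non-$A$ zeros of $B$ of order $\nu$ on $\Delta$ (all emanating trajectories staying in $\Delta$ by the level-line argument, as you note), the identities $|\mathrm{edges}|=|\mathrm{vertices}|-1$ and $2|\mathrm{edges}|=\sum\deg$ still give total $B$-multiplicity $p_j-2$ on the $j$-th component, hence $p-2m$ on $\Delta$ and $2(m-1)$ off $\Delta$. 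With the closedness justification replaced and this bookkeeping generalized, your proof is complete and essentially coincides with the paper's.
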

\begin{proof}
In \cite[Lemma~5.2]{M-FRakh11} it is shown that $\Delta$ is a subset of the closure of the critical trajectories of $(-B/A)(z)dz^2$.  Since \eqref{greendelta} can be rewritten as
\[
g_\Delta(z) = \im\left(\int_{a_1}^z\sqrt{-\frac{B(\tau)}{A(\tau)}}d\tau\right), \quad z\in D,
\]
the critical trajectories of $(-B/A)(z)dz^2$ are the level lines of $g_\Delta$ and therefore $(-B/A)(z)dz^2$ is a closed differential. By its very nature, $\Delta$ has connected complement and therefore the closed critical trajectories do not belong to $\Delta$. Since $2\partial_z g_\Delta=\sqrt{B/A}$ is holomorphic in $D^*$, all the non-closed critical trajectories belong to $\Delta$ and all the zeros of $B$ that belong to the closed critical trajectories are of even order. Let us show that their total multiplicity is equal to $2(m-1)$. This will follow from the fact that the total multiplicity of the zeros of $B$ belonging to any connected component of $\Delta$ is equal to the the number of zeros of $A$ belonging to the same component minus~2.

To prove the claim, we introduce the following counting process. Given a connected compact set with connected complement consisting of open Jordan arcs and connecting isolated points, we call a connecting point outer if there is only one arc emanating from it, otherwise we call it inner. Assume further that there are at least 3 arcs emanating from each inner connecting point. We count inner connecting points according to their multiplicity which we define to be the number of arcs incident with the point minus 2. Suppose further that the number of outer points is $p^\prime$ and the number of inner connecting points is $p^\prime-2$ counting multiplicities. Now, form a new connected set in the following fashion. Fix $\tilde p$ of the previously outer points and link each of them by Jordan arcs to $\hat p$ chosen distinct points in the complex plane in such a fashion that the new set still has connected complement and each of the previously outer points is connected to at least 2 newly chosen points. Then the new set has $p^\prime-\tilde p+\hat p$ outer connecting points ($p^\prime-\tilde p$ of the old ones and $\hat p$ of the new ones) and $p^\prime-2+(\hat p+\tilde p-2\tilde p)=p^\prime-\tilde p+\hat p-2$ inner connecting points. That is, the difference between the outer and inner connecting points is again 2. Clearly, starting from any to points in $E$ connected by a Jordan arc, one can use the previous process to recover the whole connected component of $\Delta$ containing those two points, which proves the claim.

To prove \eqref{symmetry}, observe that by \eqref{partialzg} and \eqref{BoverA} we have that
\[
\frac{\partial g_\Delta}{\partial \n_\pm} = 2\re\left(n_\pm\partial_zg_\Delta\right) = \re\left[n_\pm\left(\sqrt\frac{B}{A}\right)_\pm\right],
\]
where $n_\pm$ are the unimodular complex numbers corresponding to $\n_\pm$. Since the tangential derivative of $g_\Delta$ is zero, so is the imaginary part of the product $n_\pm(\sqrt{B/A})_\pm$. Since, $n_+=-n_-$ and $(\sqrt{B/A})_+=-(\sqrt{B/A})_-$, \eqref{symmetry} follows.
\end{proof}

Propositions~\ref{prop:existence}--\ref{prop:structure} are sufficient to prove Theorem~\ref{thm:SA}. As an offshoot of Theorem~\ref{thm:SA} we get that the contour $\Delta$ is unique since \eqref{SA1} and \eqref{SA2} imply that all but finitely many zeros of $[n/n]_{\hat\rho}$ converge to $\Delta$. However, this fact can be proved directly \cite[Thm.~2]{St85}. Moreover, it can be shown that property \eqref{symmetry} uniquely characterizes $\Delta$ among smooth cuts making $f$ single-valued \cite[Thm.~6]{BStY12}.

\begin{figure}[!ht]
\centering
\subfloat[]{\includegraphics[scale=.5]{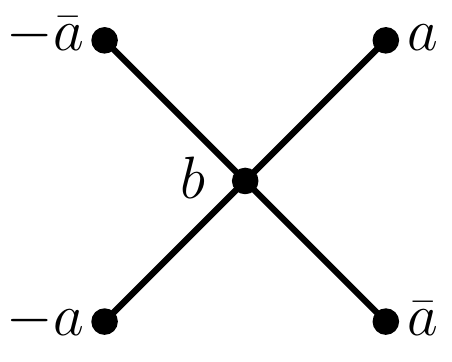}}\quad\quad\quad
\subfloat[]{\includegraphics[scale=.5]{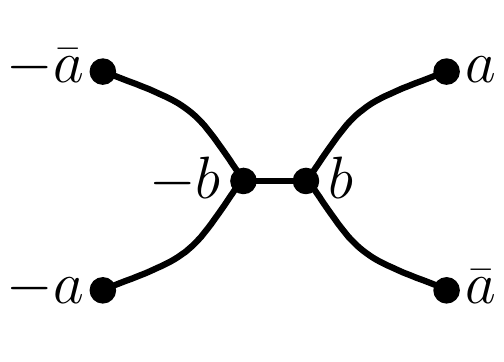}}
\caption{\small Both figures depict the set $\Delta$ for the quadratic differential $\displaystyle \frac{(z^2-b^2)dz^2}{z^4-(a^2+\bar a^2)z^2+1}$, where $|a|=1$ and $b$ is real and depends on $a$. On the left-hand figure $\Arg(a)=\pi/4$ which forces $b=0$. On the right-hand figure, $\Arg(a)<\pi/4$, in which case $b>0$.}
\label{fig:GP}
\end{figure}

While Propositions~\ref{prop:existence}--\ref{prop:structure} deal with the most general situation of an arbitrary finite set $A$, Condition~\hyperref[cond1]{GP} introduced in Section~\ref{sec:main} is designed to rule out some degenerate cases. Namely, it possible for some zeros of the polynomial $B$ to coincide with some zeros of the polynomial $A$. This happens, for example, when all the points in $A$ are collinear. In this case, the minimal capacity cut $\Delta$ is simply the smallest line segment containing all the points in $A$, and the zeros of $B$ are exactly the zeros of $A$  excluding two that are the end points of $\Delta$. It is also possible for the polynomial $B$ to have zeros of multiplicities greater than one that belong to $\Delta$. These zeros serve as endpoints to more than three arcs (multiplicity plus 2), see Figure~\ref{fig:GP}A. However, under small perturbations of the set $A$, these zeros separate to form a set $\Delta$ satisfying Condition~\hyperref[cond1]{GP}, see Figure~\ref{fig:GP}B.

\section{Riemann Surface}
\label{S:RS}

Let $\RS$ be the Riemann surface of $h$ described in Section~\ref{ss:RS}. That is, $\RS$ is a hyperelliptic Riemann surface of genus $g$ with $2g+2$ branch (ramification) points $E$ and the canonical projection (covering map) $\pi:\RS\to\overline\C$. We use bold letters $\z,\w,\tr$ to denote generic points on $\RS$ and designate the symbol $\cdot^*$ to stand for the conformal involution acting on the points of $\RS$ according to the rule
\[
\z^*=z^{(1-k)} \quad \mbox{for} \quad \z=z^{(k)}, \quad k\in\{0,1\}.
\]

\subsection{Abelian Differentials}
\label{ss:intro}

For a rational function on $\RS$, say $f$, we denote by $(f)$ the divisor of $f$, i.e., a formal symbol defined by
\[
(f) := \sum_{\z:~f(\z)=0}\z - \sum_{\w:~f(\w)=\infty}\w,
\]
where each zero $\z$ (resp. pole  $\w$) appears as many times as its multiplicity. A meromorphic differential on $\RS$ is a differential of the form $fdz$, where $f$ is a rational function on $\RS$. The divisor of $fdz$ is defined by
\[
(fdz) := (f) + (dz) = (f) + \sum_{e\in E}e - 2\infty^{(0)} - 2\infty^{(1)}.
\]

It is more convenient to write meromorphic differentials with the help of
\[
\hbar(z^{(k)}) := (-1)^k\bigg(\prod_{k\in E}(z-e_k)\bigg)^{1/2}, \quad k\in\{0,1\},
\]
where the square root is taken so $\hbar(z^{(0)})/z^{g+1}\to1$ as $z\to\infty$. Clearly, $\hbar$ is a rational function on $\RS$ with the divisor
\[
(\hbar) = \sum_{e\in E}e - (g+1)\infty^{(0)} - (g+1)\infty^{(1)}.
\]
Then arbitrary meromorphic differential can be written as $fd\widetilde\Omega$, $d\widetilde\Omega := dz/\hbar$, and respectively
\[
(fd\widetilde\Omega) = (f) + (g-1)\infty^{(0)} + (g-1)\infty^{(1)}.
\]

A meromorphic differential is called holomorphic if $(fd\widetilde\Omega)\geq0$ (its divisor is \emph{integral}). Since for any polynomial $\ell$ it holds that
\[
(\ell\circ\pi) = \sum_{z:~\ell(z)=0}\left(z^{(0)}+z^{(1)}\right) - \deg(\ell)\left(\infty^{(0)}+\infty^{(1)}\right),
\]
the holomorphic differentials are exactly those of the form $\ell d\widetilde\Omega$, $\deg(\ell)<g$. Thus, there are exactly $g$ linearly independent holomorphic differentials on $\RS$.  Under the normalization \eqref{periodsa}, these are exactly the differentials $d\Omega_k$.

Let now $\w_1,\w_2\in\widetilde\RS$, $\w_1\neq\w_2$. We denote by $d\Omega_{\w_1,\w_2}$ the abelian differential of the third kind having two simple poles at $\w_1$ and $\w_2$  with respective residues $1$ and $-1$ and normalized so
\begin{equation}
\label{thirdkinda}
\oint_{\mathbf{a}_j}d\Omega_{\w_1,\w_2}=0, \quad j\in\{1,\ldots,g\}.
\end{equation}
It is also known that
\begin{equation}
\label{thirdkindb}
\oint_{\mathbf{b}_j}d\Omega_{\w_1,\w_2} = -2\pi i\int_{\w_1}^{\w_2} d\Omega_j, \quad j\in\{1,\ldots,g\},
\end{equation}
where the path of integration lies entirely in $\widetilde\RS$.  

\subsection{Green Differential}
\label{ss:gd}
The Green differential $dG$ is the differential $ d\Omega_{\infty^{(1)},\infty^{(0)}}$ modified by a suitable holomorphic differential to have purely imaginary periods. In fact, it holds that
\begin{equation}
\label{GreenDiff}
dG(\z) =h(\z)dz, \quad \z\in\RS.
\end{equation}
Indeed, the value of the integral of $dG$ along any cycle in $\RS\setminus\left\{\infty^{(0)}\cup\infty^{(1)}\right\}$ is purely imaginary as it is a linear combination with integer coefficients of its periods on the $\mathbf{a}$- and $\mathbf{b}$-cycles and the residues at $\infty^{(0)}$ and $\infty^{(1)}$ with purely imaginary coefficients. Thus, $\re\left(\int_{a_1}^{z^{(0)}}dG\right)$ is the Green function for $D^{(0)}$ and therefore is equal to $g_D$ lifted to $D^{(0)}$. Hence, the claim follows from Proposition~\ref{prop:greenfun}.

For $\z\in\RS\setminus\left\{\infty^{(0)},\infty^{(1)}\right\}$, put
\begin{equation}
\label{ComplGreenFun}
G(\z) := \int_{a_1}^\z dG.
\end{equation}
Then $G$ is a multi-valued analytic function on $\RS\setminus\left\{\infty^{(0)},\infty^{(1)}\right\}$ which is single-valued in $\widetilde\RS$. Moreover, it easily follows from \eqref{GreenDiff} and the fact that $a_1$ is a branch point for $\RS$ that
\begin{equation}
\label{GreenFunPr1}
G(z^{(0)})+G(z^{(1)}) = 0 \quad (\mbox{mod } 2\pi i) \quad \mbox{in} \quad D_a.
\end{equation}
Furthermore, for any point $\z\in\bigcup_{k=1}^g(\mathbf{a}_k\cup\mathbf{b}_k)$ it holds that
\begin{equation}
\label{GreenFunPr2}
G^+(\z)-G^-(\z) = \left\{
\begin{array}{rcl}
\displaystyle -\oint_{\mathbf{b}_k}dG, & \mbox{if} & \z\in\mathbf{a}_k\setminus\mathbf{b}_k, \\
\displaystyle  \oint_{\mathbf{a}_k}dG, & \mbox{if} & \z\in\mathbf{b}_k\setminus\mathbf{a}_k,
\end{array}
\right.
=
\left\{
\begin{array}{rcl}
\displaystyle 2\pi i\omega_k, & \mbox{if} & \z\in\mathbf{a}_k\setminus\mathbf{b}_k, \bigskip \\
\displaystyle 2\pi i\tau_k, & \mbox{if} & \z\in\mathbf{b}_k\setminus\mathbf{a}_k,
\end{array}
\right.
\end{equation}
where the constants $\omega_k$ and $\tau_k$ were defined in \eqref{GreenPeriods} (clearly, the integrated differential in \eqref{GreenPeriods} is $dG$). As all the periods of $dG$ are purely imaginary, the constants $\omega_k$ and $\tau_k$ are real. With the above notation, we can write
\begin{equation}
\label{dG1}
dG = d\Omega_{\infty^{(1)},\infty^{(0)}}+2\pi i\sum_{j=1}^g\tau_jd\Omega_j.
\end{equation}
Indeed, the difference between $dG$ and the right-hand side of \eqref{dG1} is a holomorphic differential with zero periods on $\mathbf{a}$-cycles and therefore is identically zero since it should be a linear combination of differentials satisfying \eqref{periodsa}. In particular, it follows from \eqref{thirdkindb} and \eqref{dG1} that
\begin{equation}
\label{lateaddition}
\int_{\infty^{(1)}}^{\infty^{(0)}}d\vec\Omega = \vec\omega+\mathcal{B}_\Omega\vec\tau.
\end{equation}

Using the Green deferential $dG$, we can equivalently redefine $\map$ introduced in \eqref{map} by
\begin{equation}
\label{defmap}
\map := \exp\{G\}, \quad \map(z^{(0)})=\frac{z}{\cp(\Delta)}+\cdots.
\end{equation}
Then $\map$ is a meromorphic function on $\widetilde\RS$ with a simple pole at $\infty^{(0)}$, a simple zero at $\infty^{(1)}$, otherwise non-vanishing and finite. Moreover, $\map$ possesses continuous traces on both sides of each $\mathbf{a}_k$ and $\mathbf{b}_k$ that satisfy \eqref{jumpPhiperoids} by \eqref{GreenFunPr2}, and \eqref{Da} by \eqref{GreenFunPr1}. Let us also mention that the pull-back function of $\map$ from $\RS^{(0)}$ onto $D_a$, which we continue to denote by $\map$, is holomorphic and non-vanishing in $D_a$ except for a simple pole at infinity. It possesses continuous traces that satisfy
\begin{equation}
\label{jumponab}
\left\{
\begin{array}{lcll}
\map^+ /\map^- & = &\exp\left\{2\pi i\omega_k\right\} & \mbox{on} \quad \Delta_k^a \smallskip \\
\map^-\map^+ & = & \exp\{2\pi i \delta_k\} & \mbox{on} \quad \Delta_k,
\end{array}
\right.
\end{equation}
by \eqref{jumpPhiperoids}, \eqref{Da}, and the holomorphy of $\map$ across those cycles $L_k$ that are not the $\mathbf{b}$-cycles, where we set $\delta_k:=\tau_{j_k}$ if $\Delta_k=\pi(\mathbf{b}_{j_k})$, and $\delta_k:=0$ otherwise.

\subsection{Jacobi Inversion Problem}
\label{ss:jip}
Let $r$ be a rational function on $\overline\C$. Then $r\circ\pi$ is a rational function on $\RS$ with the involution-symmetric divisor, i.e.,
\[
(r\circ\pi) = \sum_j\tr_j-\sum_j\w_j = \sum_j\tr_j^*-\sum_j\w_j^*.
\]
As $\RS$ is hyperelliptic, any rational function over $\RS$ with fewer or equal to $g$ poles is necessarily of this form. Recall that a divisor is called \emph{principal} if it is a divisor of a rational function. Thus, the involution-symmetric divisors are always principal. By Abel's theorem, a divisor $\sum_{j=1}^k\tr_j-\sum_{j=1}^l\w_j$ is principal if and only if $k=l$ and
\[
\sum\int_{\w_j}^{\tr_j}d\vec\Omega \, \equiv \, \vec 0 \quad \left(\mdp d\vec\Omega\right).
\]
In fact, it is known that given an arbitrary integral divisor $\sum_{j=1}^g\w_j$, for any vector $\vec c$ there exists an integral divisor $\sum_{j=1}^g\tr_j$ such that
\begin{equation}
\label{general-jip}
\sum_{j=1}^g\int_{\w_j}^{\tr_j}d\vec\Omega \, \equiv \, \vec c \quad \left(\mdp d\vec\Omega\right).
\end{equation}
The problem of finding a divisor $\sum_{j=1}^g\tr_j$ for given $\vec c$ is called the Jacobi inversion problem. The solution of this problem is unique up to a principal divisor. That is, if
\begin{equation}
\label{equiv-divisor}
\sum_{j=1}^g\tr_j - \big\{\mbox{ principal divisor }\big\}
\end{equation}
is an integral divisor, then it also solves \eqref{general-jip}. Immediately one can see that the principle divisor in \eqref{equiv-divisor} should have at most $g$ poles. As discussed before, such divisors come only from rational functions over $\overline\C$. Hence, if $\sum_{j=1}^g\tr_j$, a solution of \eqref{general-jip}, is special, that is, contains at least one pair of involution-symmetric points, then replacing this pair by another such pair produces a different solution of the same Jacobi inversion problem. However, if $\sum_{j=1}^g\tr_j$ is not special, then it solves \eqref{general-jip} uniquely.

\subsection{Riemann Theta Function}
\label{ss:rtf}

\emph{Theta function} associated to $\mathcal{B}_\Omega$ is an entire transcendental function of $g$ complex variables defined by
\[
\theta\left(\vec u\right) := \sum_{\vec n\in\Z^g}\exp\bigg\{\pi i\vec n^T\mathcal{B}_\Omega\vec n + 2\pi i\vec n^T\vec u\bigg\}, \quad \vec u\in\C^g.
\]
As shown by Riemann, the symmetry of $\mathcal{B}_\Omega$ and positive definiteness of its imaginary part ensures the convergence of the series for any $\vec u$. It can be directly checked that $\theta$ enjoys the following periodicity properties:
\begin{equation}
\label{periodicity}
\theta\left(\vec u + \vec j + \mathcal{B}_\Omega\vec m\right) = \exp\bigg\{-\pi i\vec m^T\mathcal{B}_\Omega\vec m - 2\pi i\vec m^T\vec u\bigg\}\theta\big(\vec u\big), \quad \vec j,\vec m\in\Z^g.
\end{equation}

The theta function can be lifted to $\RS$ in the following manner. Define a vector  $\vec\Omega$ of holomorphic and single-valued functions in $\widetilde\RS$ by
\begin{equation}
\label{fun-1k}
\vec\Omega(\z) := \int_{a_1}^{\z}d\vec\Omega, \quad \z\in\widetilde\RS.
\end{equation}
This vector-function has continuous traces on each side of the $\mathbf{a}$- and $\mathbf{b}$-cycles that satisfy
\begin{equation}
\label{Omegapm}
\vec\Omega^{+}-\vec\Omega^- = \left\{
\begin{array}{rl}
-\mathcal{B}_\Omega\vec e_k & \mbox{on} \quad \mathbf{a}_k, \smallskip \\
\vec e_k & \mbox{on} \quad \mathbf{b}_k,
\end{array}
\right. \quad k\in\{1,\ldots,g\},
\end{equation}
by \eqref{periodsa} and \eqref{periodsb}. It readily follows from \eqref{Omegapm} that each $\Omega_k$ is, in fact, holomorphic in $\widehat\RS\setminus\mathbf{b}_k$. It is known that
\begin{equation}
\label{jac0}
\theta\left(\vec u\right)=0 \quad \Leftrightarrow \quad \vec u\equiv \sum_{j=1}^{g-1}\vec\Omega\left(\tr_{j}\right) + \vec K \quad \left(\mdp d\vec\Omega\right)
\end{equation}
for some divisor $\sum_{j=1}^{g-1}\tr_j$, where $\vec K$ is the vector of Riemann constants defined by $(\vec K)_j:=((\mathcal{B}_\Omega)_{jj}-1)/2-\sum_{k\neq j}\oint_{\mathbf{a}_k}\Omega_j^-d\Omega_k$, $j\in\{1,\ldots,g\}$.

Let $\sum_{j=1}^g\tr_j$ and $\sum_{j=1}^g\z_j$ be non-special divisors. Set
\begin{equation}
\label{theta-old}
\Theta\left(\z;\sum\tr_j,\sum\z_j\right) := \frac{\theta\left(\vec\Omega(\z) -\sum_{j=1}^g\vec\Omega\left(\tr_j\right)-\vec K\right)}{\theta\left(\vec\Omega(\z) - \sum_{j=1}^g\vec\Omega\left(\z_j\right)-\vec K\right)}.
\end{equation}
It follows from \eqref{Omegapm} that this is a meromorphic and single-valued function in $\widehat\RS$ (multiplicatively multi-valued in $\RS$). Furthermore, by \eqref{jac0} it has a pole at each $\z_j$ and a zero at each $\tr_j$ (coincidental points mean increased multiplicity),  and by \eqref{periodicity} it satisfies
\begin{equation}
\label{theta-combined-jump}
\Theta^+\left(\z;\sum\tr_j,\sum\z_j\right) = \Theta^-\left(\z;\sum\tr_j,\sum\z_j\right)\exp\left\{2\pi i\sum_{j=1}^g\big(\Omega_k(\z_j)-\Omega_k(\tr_j)\big)\right\}
\end{equation}
for $\z\in\mathbf{a}_k\setminus\left\{\cup\z_j\bigcup\cup\tr_j\right\}$. 

If the divisor $\sum_{j=1}^g\tr_j$ (resp. $\sum_{j=1}^g\z_j$) in \eqref{theta-old} is special, then the numerator (resp. denominator) is identically zero by \eqref{jac0}. This difficulty can be circumvented in the following way. Let $\w_1,\w_2\in\RS\setminus\left\{\w\right\}$ for some $\w\in\RS$. Set
\begin{equation}
\label{theta}
\Theta\left(\z;\w_1,\w_2\right) := \frac{\theta\left(\vec\Omega(\z) - \vec\Omega(\w_1) - (g-1)\vec\Omega\left(\w^*\right)-\vec K\right)}{\theta\left(\vec\Omega(\z) - \vec\Omega(\w_2) - (g-1)\vec\Omega\left(\w^*\right)-\vec K\right)}.
\end{equation}
Since the divisors $\w_j+(g-1)\w^*$ are non-special, $\Theta\left(\cdot;\w_1,\w_2\right)$ is a multiplicatively multi-valued meromorphic function on $\RS$ with a simple zero at $\w_1$, a simple pole at $\w_2$, and otherwise non-vanishing and finite. Moreover, it is meromorphic and single-valued in $\widehat\RS$ and
\begin{equation}
\label{theta-jumps}
\Theta^+\left(\z;\w_1,\w_2\right)  = \Theta^-\left(\z;\w_1,\w_2\right) \exp\left\{2\pi i\big(\Omega_k(\w_2)-\Omega_k(\w_1)\big)\right\}
\end{equation}
for $\z\in\mathbf{a}_k\setminus\{\w_1,\w_2\}$. Observe that the jump does not depend on $\vec\Omega(\w^*)$. Hence, analytic continuation argument and \eqref{theta-jumps} immediately show that $\Theta\left(\cdot;\w_1,\w_2\right)$ can be defined (up to a multiplicative constant) using any divisor $\sum_{j=1}^{g-1}\tr_{j}$ as long as $\w_i+\sum_{j=1}^{g-1}\tr_{j}$ is non-special and that
\[
\frac{\Theta\left(\z;\sum\tr_j,\sum\z_j\right)}{\Theta\left(\w;\sum\tr_j,\sum\z_j\right)} = \prod_{j=1}^g\frac{\Theta\left(\z;\tr_j,\z_j\right)}{\Theta\left(\w;\tr_j,\z_j\right)}
\]
for any $\w$ fixed and satisfying $\{\w\}\cap\left\{\cup\z_j\bigcup\cup\tr_j\right\}=\varnothing$. Let us point out that even though the construction \eqref{theta-old} is simpler, it requires only non-special divisors, while this restriction is not needed for \eqref{theta}.

\section{Boundary Value Problems on $L$}
\label{s:bvp}

This is a technical section needed to prove Proposition~\ref{prop:Sn}. The results of this sections will be applied to logarithm of $\rho\in\W$, which is holomorphic across each arc comprising $\Delta$. However, here we treat more general H\"older continuous densities as this generalization comes at no cost (analyticity of the weight $\rho$ will be essential for the Riemann-Hilbert analysis carried in Sections~\ref{s:5}--\ref{s:6}). In what follows, we describe properties of
\begin{equation}
\label{Psi}
\Psi(\z):=\frac{1}{4\pi i}\oint_L\psi d\Omega_{\z,\z^*}, \quad \z\in\widehat\RS\setminus L,
\end{equation}
for a given function $\psi$ on $L$. Before we proceed, let us derive an explicit expression for $d\Omega_{\z,\z^*}$. To this end, set
\begin{equation}
\label{Hj}
H_k(\z) := \oint_{\mathbf{a}_k}\frac{d\widetilde\Omega}{w-z} = 2\int_{\Delta^a_k}\frac{1}{w-z}\frac{dw}{\hbar(w)}, \quad k\in\{1,\ldots,g\}.
\end{equation}
Clearly, each $H_k$ is a holomorphic function on $\RS\setminus\mathbf{a}_k$ that satisfies
\begin{equation}
\label{jumpHj}
\hbar H_k^+ - \hbar H_k^- = 4\pi i \quad \mbox{on} \quad \mathbf{a}_k
\end{equation}
due to Sokhotski-Plemelj formulae \cite{Gakhov} as apparent from the second integral representation in \eqref{Hj}. Then, using functions $H_k$, we can write
\begin{equation}
\label{specialthirdkind}
d\Omega_{\z,\z^*}(\w) = \frac{\hbar(\z)}{w-z}d\widetilde\Omega(\w) - \sum_{k=1}^g(\hbar H_k)(\z)d\Omega_k(\w).
\end{equation}

\subsection{H\"older Continuous Densities}
\label{ss:sf}

Let $\psi$ be a function on $L\setminus E$ with H\"older continuous extension to each cycle $L_k$ and $\Psi$ be given by \eqref{Psi}. The differential $d\Omega_{\z,\z^*}$ plays a role of the Cauchy kernel on $\RS$ with a discontinuity. Indeed, it follows from \eqref{specialthirdkind} that
\begin{equation}
\label{PsiAgain}
\Psi(\z) = \frac{\hbar(\z)}{4\pi i}\sum_{j}\oint_{L_j}\frac{\psi(\tr)}{t-z}\frac{dt}{\hbar(\tr)} - \frac{\hbar(\z)}{4\pi i}\sum_{k=1}^gH_k(z)\oint_L\psi d\Omega_k =: \sum_j\Psi_{L_j}(\z)-\sum_{k=1}^g\Psi_{\mathbf{a}_k}(\z).
\end{equation}

Each function $\Psi_{L_j}$ is holomorphic in $\RS\setminus\left(L_j\cup\{\infty^{(0)},\infty^{(1)}\}\right)$ with H\"older continuous traces on $L_j$ that satisfy
\begin{equation}
\label{PsiPlemelj0}
\Psi_{L_j}^+ - \Psi_{L_j}^- = \psi.
\end{equation}
Clearly, $\Psi_{L_j}(e)=0$ for $e\in  E\setminus L_j$. Moreover, it holds by \eqref{PsiPlemelj0} and the identity $\Psi_{L_j}(\z)+\Psi_{L_j}(\z^*)\equiv0$ that
\begin{equation}
\label{Psi0E}
\Psi_{L_j}(z^{(k)}) \to \frac{(-1)^k}2\psi_{|L_j}(e) \quad \mbox{as} \quad z\to e
\end{equation}
for univalent ends $e\in E\cap L_j$. To describe the behavior of $\Psi_{L_j}$ near trivalent ends, recall that $\Delta$ splits any disk centered at $e$ of small enough radius into three sectors. Two of these sectors contain part of $\Delta_j$ in their boundary and one sector does not. Recall further that $\hbar(z)$ changes sign after crossing each of the subarcs of $\Delta$. Thus, it holds for trivalent ends $e\in E\cap L_j$ that
\begin{equation}
\label{Psi0Ea}
\Psi_{L_j}(z^{(k)}) \to \pm\frac{(-1)^k}2\psi_{|L_j}(e) \quad \mbox{as} \quad z\to e,
\end{equation}
where $+$ sign corresponds to the approach within the sectors partially bounded by $\Delta_j$ and the $-$ sign corresponds to the approach within the sector which does not contain $\Delta_j$ as part of its boundary.

Similarly, each $\Psi_{\mathbf{a}_k}$ is a holomorphic function in $\RS\setminus\left(\mathbf{a}_k\cup\{\infty^{(0)},\infty^{(1)}\}\right)$ with H\"older continuous traces on $\mathbf{a}_k$ that satisfy
\begin{equation}
\label{PsiPlemeljk}
\Psi_{\mathbf{a}_k}^+ - \Psi_{\mathbf{a}_k}^- = \oint_L\psi d\Omega_k
\end{equation}
by \eqref{jumpHj}. Analogously to \eqref{Psi0E}, one can verify that $\Psi_{\mathbf{a}_k}(e)=0$ for $e\in E\setminus\mathbf{a}_k$ and
\begin{equation}
\label{PsikE}
\Psi_{\mathbf{a}_k}(z^{(k)}) \to \frac{(-1)^k}2\oint_L\psi d\Omega_k \quad \mbox{as} \quad z\to e\in \mathbf{a}_k\cap E.
\end{equation}

Combining all the above, we get that $\Psi$ is a holomorphic function in $\widehat\RS\setminus L$ including at $\infty^{(0)}$ and $\infty^{(1)}$ where it holds that
\begin{equation}
\label{Psivalueatinfty}
\Psi(\infty^{(0)}) = -\Psi(\infty^{(1)}) = \frac12\vec\tau^T\oint_L\psi d\vec\Omega - \frac{1}{4\pi i}\oint_L\psi dG
\end{equation}
by \eqref{dG1}. Moreover, it has H\"older continuous traces on both sides of $(L\cup\bigcup \mathbf{a}_k)\setminus E$ that satisfy
\begin{equation}
\label{PsiPlemelj}
\Psi^+ - \Psi^- = \left\{
\begin{array}{ll}
\psi, & \mbox{on} \quad L\setminus E, \smallskip \\
-\oint_L\psi d\Omega_k, & \mbox{on} \quad \mathbf{a}_k\setminus E,
\end{array}
\right.
\end{equation}
according to \eqref{PsiPlemelj0} and \eqref{PsiPlemeljk}.  Finally, the behavior at $e\in E$ can be deduced from \eqref{Psi0E}, \eqref{Psi0Ea}, and \eqref{PsikE}.

\subsection{Logarithmic Discontinuities}
Assume now that $\psi$ has logarithmic singularities at $e\in E$, which, obviously, violates the condition of global H\"older continuity of $\psi$ on the cycles $L_k$. However, global H\"older continuity is not necessary for $\Psi$ to be well-defined. In fact, it is known that the traces $\Psi^\pm$ are H\"older continuous at $\tr\in L$ as long as $\psi$ is locally H\"older continuous around this point. Thus, we only need to describe the behavior of $\Psi$ near those $e\in E$ where $\psi$ has a singularity.

Let $a$ be a fixed univalent end of $\Delta$ and $\Delta_j$ be the arc incident with $a$. Further, let $\psi$ be a fixed determination of $\alpha\log(\cdot-a)$ holomorphic around each arc $\Delta_k$ (except at $a$ when $k=j$), where $\alpha$ is a constant. As before, define $\Psi$ by \eqref{Psi}. It clearly follows from \eqref{PsiAgain} that we only need to describe the behavior of $\Psi_{L_j}$ around $a$ as the behavior of the other terms is unchanged. To this end, it can be readily verified that
\begin{equation}
\label{Psia}
\Psi_{L_j}(\z) =\frac{\hbar(\z)}{2\pi i}\int_{\Delta_j}\frac{\psi(t)}{t-z}\frac{dt}{\hbar^+(t)}.
\end{equation}
Denote by $U_{a,\delta}$ a ball centered at $a$ of radius $\delta$ chosen small enough that the intersection $\Delta_j\cap U_{a,\delta}$ is an analytic arc. Denote also by $U_{a,\delta}^\pm$ the maximal open subset of $U_{a,\delta}\setminus\Delta_j$ in which $\psi$ is holomorphic and $\Delta_j^\pm\subset \partial U_{a,\delta}^\pm$ if $\Delta_j$ is orienter towards $a$  and $\Delta_j^\pm\subset \partial U_{a,\delta}^\mp$ if $\Delta_j$ is oriented away from $a$. Set
\begin{equation}
\label{argez}
\arg(a-z) := \arg(z-a) \pm \pi \quad \mbox{in} \quad U_{a,\delta}^\pm.
\end{equation}
It can be readily verified that thus defined $\log(a-z):=\log|a-z|+i\arg(a-z)$ is holomorphic in $U_{a,\delta}\setminus\Delta_j$. Then, arguing as in \cite[Equations (8.34)--(8.35)]{Gakhov}, that is, by identifying a function with the same jump across $\Delta_j$ as the one of integral in \eqref{Psia}, we get that 
\begin{equation}
\label{smth}
\frac{1}{2\pi i}\int_{\Delta_j}\frac{\psi(t)}{t-z}\frac{dt}{\hbar^+(t)} = \frac\alpha2\frac{\log(a-z)}{\hbar(z)} + \{\mbox{ terms that are holomorphic at $a$ }\}
\end{equation}
in $U_{a,\delta}\setminus\Delta_j$. Multiplying both sides of \eqref{smth} by $\hbar$, we get that \eqref{Psi0E} is replaced by
\begin{equation}
\label{Psi0a}
\Psi_{L_j}(z^{(k)}) -(-1)^k \frac\alpha2\log(a-z) \to 0 \quad \mbox{as} \quad z\to a.
\end{equation}

Let now $b$ be a trivalent end of $\Delta$ and $\psi$ be a fixed determination of $\alpha\log(\cdot-b)$ analytic across $\Delta$, where $\alpha$ as before is a constant. Further, let $\Delta_{b,j}$ be the arcs incident with $b$. Fix $l\in\{1,2,3\}$ and let $\log(b-z)$ be defined by \eqref{argez} with respect to $\Delta_{b,l}$. Then \eqref{smth} still takes place within the sectors delimited by $\Delta_{b,l}\cup\Delta_{b,l+1}$ and $\Delta_{b,l}\cup\Delta_{b,l-1}$, where $l\pm1$ is understood cyclicly within $\{1,2,3\}$. However, within the sector delimited by $\Delta_{b,l+1}\cup\Delta_{b,l-1}$, the right-hand side of \eqref{smth} has to be multiplied by $-1$ to ensure analyticity across $\Delta_{b,l+1}\cup\Delta_{b,l-1}$. Then, multiplying both sides of \eqref{smth} by $\hbar$, we get that
\begin{equation}
\label{Psinearb}
\Psi_{L_{b,l}}(z^{(k)}) \mp(-1)^k \frac\alpha2\log(b-z) \to 0 \quad \mbox{as} \quad z\to b,
\end{equation}
where $-$ sign corresponds to the approach within the sectors delimited by $\Delta_{b,l}\cup\Delta_{b,l+1}$ and $\Delta_{b,l}\cup\Delta_{b,l-1}$, and the $-$ sign corresponds to the approach within the sector delimited by the pair $\Delta_{b,l+1}\cup\Delta_{b,l-1}$. Hence, the behavior of $\Psi$ near $b$ is completely determined by the behavior of the sum $\sum_{l=1}^3\Psi_{L_{b,l}}$.

\subsection{Auxiliary Functions}
\label{ss:szego}
For an arbitrary $\vec x\in\R^g$ set $\psi_{\vec x}$ to be a function on $L$ such that
\[
\psi_{\vec x} : =\left\{
\begin{array}{ll}
2\pi i\big(\vec x\big)_k & \mbox{on} \quad \mathbf{b}_k, \smallskip \\
0 & \mbox{otherwise}.
\end{array}
\right.
\]
Define further
\begin{equation}
\label{defSx}
S_{\vec x}(\z) : = \exp\left\{\frac{1}{4\pi i}\oint_L\psi_{\vec x} d\Omega_{\z,\z^*}\right\}, \quad \z\in\widehat\RS\setminus L.
\end{equation}
Then $S_{\vec x}$ is a holomorphic and non-vanishing function in $\widetilde\RS$, with H\"older continuous non-vanishing traces on both sides of each $\mathbf{a}$- and $\mathbf{b}$-cycle that satisfy
\begin{equation}
\label{jumpSx}
S_{\vec x}^+ = S_{\vec x}^- \left\{
\begin{array}{lr}
\exp\big\{2\pi i \big(\vec x\big)_k\big\} & \mbox{on} \quad \mathbf{b}_k, \bigskip \\
\exp\big\{-2\pi i\left(\mathcal{B}_\Omega\vec x\right)_k\big\} & \mbox{on} \quad \mathbf{a}_k,
\end{array}
\right.
\end{equation}
for $k\in\{1,\ldots,g\}$ by \eqref{PsiPlemelj} and \eqref{periodsb}. Observe also that
\begin{equation}
\label{proddecomp}
S_{\vec x} = \prod_{k=1}^g S_{\vec e_k}^{(\vec x)_k},
\end{equation}
where, as before, $\left\{\vec e_k\right\}_{k=1}^g$ is the standard basis in $\R^g$.

Now, let $\rho\in\W$ and $\log\rho$ be a fixed branch holomorphic across each $\Delta_k$ in $\Delta$. Define
\begin{equation}
\label{defSrho}
S_{\log\rho}(\z) : = \exp\left\{\frac{1}{4\pi i}\oint_L\log(\rho\circ\pi) d\Omega_{\z,\z^*}\right\}, \quad \z\in\widehat\RS\setminus L.
\end{equation}
Then, as in the case of \eqref{defSx}, it follows from \eqref{PsiPlemelj} that $S_{\log\rho}$ is a holomorphic function in $\widehat\RS\setminus L$ and
\begin{equation}
\label{jumpSrho}
S_{\log\rho}^+ = S_{\log\rho}^- \left\{
\begin{array}{ll}
\rho\circ\pi, & \mbox{on} \quad L\setminus E, \smallskip \\
\exp\big\{-\oint_L\log(\rho\circ\pi)d\Omega_k\big\}, & \mbox{on} \quad \mathbf{a}_k\setminus E, \quad k\in\{1,\ldots,g\}.
\end{array}
\right.
\end{equation}
If $a$ is a univalent end of $\Delta$ and $\Delta_j$ is the arc incident with $a$, then $\rho_{|\Delta_j}(z)=w_j(z)(z-a)^{\alpha_a}$, where $w_j$ is holomorphic and non-vanishing in some neighborhood of $\Delta_j$ and $(z-a)^{\alpha_a}$ is a branch holomorphic around $\Delta_j\setminus\{a\}$.  Let $(a-z)^{\alpha_a}$ be the branch defined by \eqref{argez}. Then $(a-z)^{\alpha_a}=(z-a)^{\alpha_a}\exp\{\pm\alpha_a\pi i\}$ in $U_{a,\delta}^\pm$, where the latter were defined right before \eqref{argez}. Thus, it holds by \eqref{Psi0E} and \eqref{Psi0a} that
\begin{equation}
\label{SrhoUnivalent}
S_{\log\rho}^2(z)/\rho(z) \to \exp\left\{\pm\alpha_a\pi i-\sum_{k=1}^g\varepsilon_{\mathbf{a}_k}(a,z)\oint_L\log(\rho\circ\pi)d\Omega_k\right\} \quad \mbox{as} \quad U_{a,\delta}^\pm\ni z\to a,
\end{equation}
where $\varepsilon_{\mathbf{a}_k}(a,\z)\equiv0$  if $a\not\in\mathbf{a}_k$, and $\varepsilon_{\mathbf{a}_k}(a,\z)\equiv\pm1$ if $a\in\mathbf{a}_k$ and $\z\to a\in\mathbf{a}_k^\pm$. If $b$ is a trivalent end of $\Delta$, let $\Delta_{b,j}$, $j\in\{1,2,3\}$, be the arcs incident with $b$. Further, let $S_{b,j}$ be a sector delimited $\Delta_{b,j\pm1}$ within a disk centered at $b$ of small enough radius, where $j\pm1$ is understood cyclicly within $\{1,2,3\}$. Then it follows from \eqref{Psi0Ea} that
\begin{equation}
\label{SrhoTrivalent}
S_{\log\rho}^2(z) \to \frac{\rho_{|\Delta_{b,j-1}}(b)\rho_{|\Delta_{b,j+1}}(b)}{\rho_{|\Delta_{b,j}}(b)} \quad \mbox{as} \quad S_{b,j}\ni z\to b.
\end{equation}
Finally, we can deduce the behavior of $S_{\log\rho}$ at $\infty^{(0)}$ from \eqref{Psivalueatinfty} in a straightforward fashion.

Now, let $\log h^+$ be a fixed branch continuous on $\Delta\setminus E$. Define $S_{\log h}$ as in \eqref{defSrho} with $\log\rho$ replaced by $\log h^+$. Then $S_{\log h}$ enjoys the same properties $S_{\log\rho}$ does except for \eqref{SrhoUnivalent} and \eqref{SrhoTrivalent} as $h$ is not holomorphic across $\Delta\setminus E$ unlike $\rho$. In particular, it holds that
\begin{equation}
\label{jumpSh}
S_{\log h}^+ = S_{\log h}^- \left\{
\begin{array}{ll}
h^+\circ\pi, & \mbox{on} \quad L\setminus E, \smallskip \\
\exp\big\{-\oint_L\log(h^+\circ\pi)d\Omega_k\big\}, & \mbox{on} \quad \mathbf{a}_k\setminus E, \quad k\in\{1,\ldots,g\}.
\end{array}
\right.
\end{equation}
Moreover, it can be easily verified that \eqref{SrhoUnivalent} gets replaced by
\begin{equation}
\label{ShUnivalent}
S_{\log h}^2(z)/h(z) \to \exp\left\{\mp\frac{\pi i}2-\sum_{k=1}^g\varepsilon_{\mathbf{a}_k}(a,z)\oint_L\log(h^+\circ\pi)d\Omega_k\right\} \quad \mbox{as} \quad z\to a
\end{equation}
with $-$ sign used when $\Delta_j$ is oriented towards $a$ (one needs to take $U_{a,\delta}^+=U_{a,\delta}\setminus\Delta_j$ and $U_{a,\delta}^-=\varnothing$ in \eqref{Psi0a}) and  $+$ sign used when $\Delta_j$ is oriented away from $a$ ($U_{a,\delta}^-=U_{a,\delta}\setminus\Delta_j$ and $U_{a,\delta}^+=\varnothing$). Similarly, \eqref{SrhoTrivalent} is replaced by
\begin{equation}
\label{ShTrivalent}
S^2_{\log h}/h\to\exp\big\{\pm\pi i/2\big\} \quad \mbox{as} \quad z\to b,
\end{equation}
where the sign $+$ corresponds to the case when the arcs incident with $b$ are oriented towards $b$ and the sign $-$ corresponds to the other case (this conclusion is deduced from \eqref{Psi0Ea} and \eqref{Psinearb} applied to each arc incident with $b$).

\section{Szeg\H{o} Functions}
\label{s:s}

In this section we prove Propositions~\ref{prop:ni},~\ref{prop:sequence}, and~\ref{prop:Sn}.

\subsection{Proof of Proposition~\ref{prop:ni}}
\label{ss:ni}
It follows from the discussion in Section~\ref{ss:jip} that a solution of \eqref{main-jip} is either unique or special; and in the latter case any pair of involution-symmetric points can be replaced by any other such pair. According to the convention adopted in Definition~\ref{def:2}, we denote by $\sum_{j=1}^g\tr_{n,j}$ the divisor that either uniquely solves \eqref{main-jip} or solves \eqref{main-jip} and all the involution-symmetric pairs are taken to be $\infty^{(0)}+\infty^{(1)}$.

Let $\sum_{j=1}^gb_j^{(1)}$ be as in \eqref{bj1} and $\vec\omega,\vec\tau,\vec c_\rho$ be as in \eqref{vectors}. Notice that by \eqref{lateaddition} it holds that
\begin{equation}
\label{awesome}
\sum_{j=1}^g\int_{b_j^{(1)}}^{\tr_{n,j}}d\vec\Omega + i\int_{\infty^{(1)}}^{\infty^{(0)}}d\vec\Omega  \, \equiv \, \vec c_\rho + (n+i)\big(\vec\omega+\mathcal{B}_\Omega\vec\tau\big) \quad \left(\mdp d\vec\Omega\right).
\end{equation}
Then if
\[
\sum_{j=1}^g\tr_{n,j} = \sum_{j=1}^{g-l}\tr_j + k\infty^{(0)} + (l-k)\infty^{(1)},
\]
where $\big\{\tr_j\big\}_{j=1}^{g-l}\subset\RS\setminus\big\{\infty^{(0)},\infty^{(1)}\big\}$, it holds by \eqref{awesome} that
\[
\sum_{j=1}^g\tr_{n+i,j} = \sum_{j=1}^{g-l}\tr_j + (k+i)\infty^{(0)} + (l-k-i)\infty^{(1)}
\]
for each $i\in\{-k,\ldots,l-k\}$. The uniqueness of the solutions for $i=-k,l-k$ immediately follows from the fact that $\sum_{j=1}^g\tr_{n+i,j}$ is not special for these indices.

Now, let $\sum_{j=1}^g\tr_{n,j}$ be the unique solution of \eqref{main-jip} that does not contain $\infty^{(k)}$, $k\in\{0,1\}$. If $\sum_{j=1}^g\tr_{n-(-1)^k,j}$ were not the unique solution, it would contain at least one pair $\infty^{(1)}+\infty^{(0)}$ and therefore $\sum_{j=1}^g\tr_{n,j}$ would contain $\infty^{(k)}$ by the first part of the proof. Thus, $\sum_{j=1}^g\tr_{n-(-1)^k,j}$ solves  \eqref{main-jip} uniquely, and it only remains to show that
\begin{equation}
\label{disjointness}
\big\{\tr_{n,j}\big\}_{j=1}^g \cap \big\{\tr_{n-(-1)^k,j}\big\}_{j=1}^g = \varnothing.
\end{equation}
Assume the contrary. For definiteness, let $g^\prime<g$ be the number of distinct points in the divisors $\sum_{j=1}^g\tr_{n,j}$ and $\sum_{j=1}^g\tr_{n-(-1)^k,j}$, and label the common points by indices ranging from $g^\prime+1$ to $g$. If \eqref{disjointness} were false, then it would follow from \eqref{main-jip} and \eqref{lateaddition} that
\[
\sum_{j=1}^{g^\prime}\int_{\tr_{n-(-1)^k,j}}^{\tr_{n,j}}d\vec\Omega -(-1)^k \int_{\infty^{(1)}}^{\infty^{(0)}}d\vec\Omega  \, \equiv \, \vec 0 \quad \left(\mdp d\vec\Omega\right).
\]
That is, the divisor
\[
\sum_{j=1}^{g^\prime} \tr_{n,j} - \sum_{j=1}^{g^\prime} \tr_{n-(-1)^k,j} -(-1)^k \infty^{(0)} +(-1)^k \infty^{(1)}
\]
would be principal. However, since $g^\prime+1\leq g$, such divisors come solely from rational functions over $\overline\C$ and their zeros as well as poles appear in involution-symmetric pairs. Hence, the divisor $\sum_{j=1}^{g^\prime} \tr_{n,j}$ would contain an involution-symmetric pair or $\infty^{(k)}$. As both conclusions are impossible, \eqref{disjointness} indeed takes place. This completes the proof of Proposition~\ref{prop:ni}.

\subsection{Proof of Proposition~\ref{prop:sequence}}
\label{ss:sequence}

Let $\N^{\prime\prime}\subseteq\N^\prime$ be such a subsequence that the divisors $\sum_{i=1}^g\tr_{n+j,i}$ converge to a divisor $\sum_{i=1}^g\w_i$ as $\N^{\prime\prime}\ni n\to\infty$ for a fixed index $j\in\{-l_0-k,\ldots,l_1+k\}$. Then the continuity of $\vec\Omega$ implies that
\[
\lim_{\N^{\prime\prime}\ni n\to\infty} \sum_{i=1}^g\int_{b_i^{(1)}}^{\tr_{n+j,i}}d\vec\Omega = \sum_{i=1}^g\int_{b_i^{(1)}}^{\w_i}d\vec\Omega = \sum_{i=1}^g\vec\Omega\left(\w_i\right)- \vec v_b,
\]
where $\vec v_b:=\sum_{i=1}^g\vec\Omega\big(b_i^{(1)}\big)$ (recall also the convention that all the paths of integration belong to $\widetilde\RS$ and therefore the right-hand side of the equality above does not depend on the labeling of $\sum\tr_{n+j,i}$ and $\sum b_i^{(1)}$). Hence, it holds that
\[
\lim_{\N^{\prime\prime}\ni n\to\infty} \left(\vec c_\rho+(n+j)\big(\vec\omega+\mathcal{B}_\Omega\vec\tau\big)\right) \equiv \sum_{i=1}^g\vec\Omega\left(\w_i\right) - \vec v_b,
\]
where, from now on, all the equivalences are understood $\mdp d\vec\Omega$. Set $l:=l_0-l_1$. Assume first that $l\geq0$. Then, analogously to the previous computation, we have that
\[
\lim_{\N^\prime\ni n\to\infty} \left(\vec c_\rho+n\big(\vec\omega+\mathcal{B}_\Omega\vec\tau\big)\right) \equiv \sum_{i=1}^{g-2k-l_0-l_1}\vec\Omega\left(\tr_i\right) + l\vec\Omega\left(\infty^{(0)}\right) - \vec v_b
\]
since $\vec\Omega\left(z^{(0)}\right)=-\vec\Omega\left(z^{(1)}\right)$. 

In what follows, we assume that $l+2j\geq0$, otherwise, if $l+2j<0$, each occurrence of $\infty^{(0)}$ and $l+2j$ needs to be replaced by $\infty^{(1)}$ and $-(l+2j)$, respectively. Then \eqref{lateaddition} and the just mentioned anti-symmetry of $\vec\Omega$ yield that
\[
\lim_{\N^\prime\ni n\to\infty} \left(\vec c_\rho+(n+j)\big(\vec\omega+\mathcal{B}_\Omega\vec\tau\big)\right) \equiv \sum_{i=1}^{g-2k-l_0-l_1}\vec\Omega\left(\tr_i\right) + (l+2j)\vec\Omega\left(\infty^{(0)}\right) - \vec v_b.
\]
Hence, it is true that
\[
\sum_{i=1}^g\vec\Omega\left(\w_i\right) \equiv \sum_{i=1}^{g-2k-l_0-l_1}\vec\Omega\left(\tr_i\right) + (l+2j)\vec\Omega\left(\infty^{(0)}\right).
\]
Therefore, for any collection $\{u_i\}_{i=1}^{l_1+k-j}\subset\overline\C$ it holds by Abel's theorem that the divisor
\[
\sum_{i=1}^{g-2k-l_0-l_1}\tr_i+\sum_{i=1}^{l_1+k-j}\left(u_i^{(0)}+u_i^{(1)}\right)+(l+2j)\infty^{(0)} - \sum_{i=1}^g\w_i
\]
is principal ($l_1+k-j$ needs to be replaced by $l_0+k+j$ when $l+2j<0$). As the integral part of this divisor has at most $g$ elements, the divisor should be involution-symmetric. However, if $\sum_{i=1}^{g-2k-l_0-l_1}\tr_j+(l+2j)\infty^{(0)}$ is non-void, it is non-special, and therefore $\sum_{i=1}^g\w_i$ is equal to $\sum_{i=1}^{g-2k-l_0-l_1}\tr_i+\sum_{i=1}^{l_1+k-j}\big(u_i^{(0)}+u_i^{(1)}\big)+(l+2j)\infty^{(0)}$; if it is void, $\sum_{i=1}^g\w_i$ is an arbitrary involution-symmetric divisor. In any case, this is exactly what is claimed by the proposition. Clearly, the case $l<0$ can be treated similarly.

To prove the last assertion of the proposition, observe that the divisors $\sum_{j=1}^g\tr_j$ and $\sum_{j=1}^g\w_j$ are connected by the relation
\[
\sum_{j=1}^g\vec\Omega(\tr_j) -(-1)^k\int_{\infty^{(1)}}^{\infty^{(0)}}d\vec\Omega \equiv \sum_{j=1}^g\vec\Omega(\w_j).
\]
Hence, by Abel's theorem the divisor $\sum_{j=1}^g\tr_j-\sum_{j=1}^g\w_j-(-1)^k\infty^{(0)}+(-1)^k\infty^{(1)}$ is principal. Since $\sum_{j=1}^g\tr_j+\infty^{(1-k)}$ is non-special, the claim follows as in the end of the proof of Proposition~\ref{prop:ni}.

\subsection{Proof of Proposition~\ref{prop:Sn}}
\label{ss:Sn}

Any vector $\vec u\in\C^g$ can be uniquely and continuously written as $\vec x+\mathcal{B}_\Omega\vec y$, $\vec x,\vec y\in\R^g$, since the imaginary part of $\mathcal{B}_\Omega$ is positive definite. Hence, we can define
\begin{equation}
\label{xnyndef}
\vec x_n+\mathcal{B}_\Omega\vec y_n\; := \; \sum_{j=1}^g\int_{b_j^{(1)}}^{\tr_{n,j}}d\vec\Omega = \sum_{j=1}^g\left(\vec\Omega\left(\tr_{n,j}\right)-\vec\Omega\left(b_j^{(1)}\right)\right).
\end{equation}
As the image of the closure of $\widetilde\RS$ under $\vec\Omega$ is bounded in $\C^g$, it holds that
\begin{equation}
\label{xnynbound}
|\vec x_n|,|\vec y_n| \leq \const
\end{equation}
independently of $n$, where $|\vec c|^2:=\sum_{k=1}^g|(\vec c)_k|^2$. Set further
\[
\vec x_\rho+\mathcal{B}_\Omega\vec y_\rho := \vec c_\rho.
\]
Then it follows from the very choice of $\sum_{j=1}^g\tr_{n,j}$, see \eqref{main-jip}, that there exist unique vectors $\vec j_n,\vec m_n\in\Z^g$ such that
\begin{equation}
\label{connectitall}
\vec x_\rho + n\vec\omega = \vec x_n + \vec j_n \quad \mbox{and} \quad \vec y_\rho + n\vec \tau = \vec y_n + \vec m_n.
\end{equation}
Therefore, we immediately deduce from \eqref{xnynbound} that
\begin{equation}
\label{mn}
|\vec m_n-\vec m_{n-1}|,~|(2n-1)\vec\tau-\vec m_n-\vec m_{n-1}|,~|n\vec\tau-\vec m_n| \leq \const
\end{equation}
independently of $n$. 

Let now $S_{\vec\tau}$ and $S_{\vec m_n}$ be defined by  \eqref{defSx}. Then it is an easy consequence of \eqref{mn} and \eqref{proddecomp} that
\begin{equation}
\label{ratioSx}
0<\const\leq\left|S_{\vec m_n}/S_{\vec m_{n-1}}\right|,~\left|S_{\vec\tau}^{2n-1}/S_{\vec m_n}S_{\vec m_{n-1}}\right|,~\left|S_{\vec m_n}/S_{\vec\tau}^n\right|\leq \const<\infty
\end{equation}
uniformly in $\widetilde\RS$. Notice also that
\begin{equation}
\label{buildblock1}
\left(S_{\vec m_n}/S_{\vec\tau}^n\right)^+ = \left(S_{\vec m_n}/S_{\vec\tau}^n\right)^-
\left\{
\begin{array}{lr}
\exp\big\{-2\pi i n\tau_k\big\} & \mbox{on} \quad \mathbf{b}_k, \bigskip \\
\exp\big\{-2\pi i\left(\mathcal{B}_\Omega\big(\vec m_n-n\vec\tau\big) \right)_k\big\} & \mbox{on} \quad \mathbf{a}_k.
\end{array}
\right.
\end{equation}

Using definitions \eqref{theta-old} and \eqref{theta}, set
\[
\Theta_n(\z) := \Theta\left(\z;\sum\tr_{n,j},\sum b_j^{(1)}\right) \quad \mbox{and} \quad \Theta_n(\z) := \prod_{j=1}^g\Theta\left(\z;\tr_{n,j},b_j^{(1)}\right),
\]
where the first formula is used for non-special divisors $\sum\tr_{n,j}$ and the second one otherwise. Then $\Theta_n$ is a meromorphic function in $\widehat\RS$ with poles at $b_j^{(1)}$, zeros at $\tr_{n,j}$ (as usual, coincidental points mean increased multiplicity), and otherwise non-vanishing and finite. It also follows from \eqref{theta-jumps}, \eqref{theta-combined-jump}, and \eqref{xnyndef} that
\begin{equation}
\label{buildblock2}
\Theta_n^+ = \Theta_n^-\exp\left\{-2\pi i\left(\vec x_n+\mathcal{B}_\Omega\vec y_n\right)_k\right\} \quad \mbox{on} \quad \mathbf{a}_k.
\end{equation}

Finally, let $S_{\log h}$ and $S_{\log\rho}$ be defined as in Section~\ref{ss:szego} with the branch of the difference $\log\rho-\log h^+$ chosen to match the one used in \eqref{vectors} to define $\vec c_\rho$. Set
\begin{equation}
\label{defofSn}
S_n := \frac{S_{\log h}}{S_{\log\rho}}\frac{S_{\vec m_n}}{S_{\vec\tau}^n}\Theta_n.
\end{equation}
Then $S_n$ is a meromorphic function in $\widehat\RS\setminus L$ and $S_n\map^n$ is meromorphic in $\RS\setminus L$ by \eqref{jumpPhiperoids}, \eqref{jumpSrho}, \eqref{jumpSh}, \eqref{buildblock1}, \eqref{buildblock2}, and \eqref{connectitall}. Clearly, the same equations also yield that $S_n\map^n$ satisfies \eqref{jumpSn}. Finally,  \eqref{SnE} follows from \eqref{SrhoUnivalent} and \eqref{SrhoTrivalent}, \eqref{ShUnivalent} and \eqref{ShTrivalent}, reciprocal symmetry of $S_{\log\rho}$ and $S_{\log h}$ on different sheets of $\RS$, and the properties of $\Theta_n$.

Now, let $S$ be as described in the statement of Proposition~\ref{prop:Sn}. Then by the principle of analytic continuation $S/S_n$ is a rational function over $\RS$ with the divisor $\sum\tr_j-\sum_{j=1}^g\tr_{n,j}$. Since rational functions have as many zeros as poles, the divisor $\sum\tr_j$ has exactly $g$ elements. Further, as explained in Section~\ref{ss:jip}, the principal divisors with strictly fewer than $g+1$ poles are necessarily involution-symmetric; that is, they come from the lifts of rational functions on $\overline\C$ to $\RS$. It also follows from Proposition~\ref{prop:ni} that $\sum_{j=1}^g\tr_{n,j}$ consists of a non-special part and a number of pairs $\infty^{(1)}+\infty^{(0)}$. Hence, $\sum\tr_j$ has the same non-special part as $\sum_{j=1}^g\tr_{n,j}$ and the same number of involution-symmetric pairs of elements. Due to Proposition~\ref{prop:ni}, the latter means that $\sum\tr_j$ solves \eqref{main-jip}.  Lastly, as all the poles of the rational function $S/S_n$ are equally split between $\infty^{(0)}$ and $\infty^{(1)}$, this is a polynomial.

It remains to show the validity of \eqref{SnNormalized}. It follows from the definition of $S_n$ and \eqref{ratioSx} that we only need to estimate
\[
\frac{\Theta_{n-1}(\z)}{\Theta_n(\z)}\frac{\Theta_n(\infty^{(0)})}{\Theta_{n-1}(\infty^{(1)})}.
\]

To this end, denote by $\mathfrak{C}_\varepsilon^0$ and $\mathfrak{C}_\varepsilon^1$ the closures of $\big\{\sum\tr_{n,j}\big\}_{n\in\N_\varepsilon}$ and $\big\{\sum\tr_{n-1,j}\big\}_{n\in\N_\varepsilon}$ in the $\RS^g/\Sigma_g$-topology. Neither of these sets contains special divisors. Indeed, both sequences consists of non-special divisors and therefore we need to consider only the limiting ones. The limit points belonging to $\mathfrak{C}_\varepsilon^0$ are necessarily of the form
\[
\sum_{i=1}^{g-2k-l}\tr_i+\sum_{i=1}^k\left(z_i^{(0)}+z_i^{(1)}\right) + l\infty^{(1)},
\]
where $\sum_{i=1}^{g-2k-l}\tr_i$, $|\pi(\tr_i)|<\infty$, is non-special and $\{z_i\}_{i=1}^k\subset\C$. If $k>0$, Proposition~\ref{prop:sequence}, applied with $l_0=0$, $l_1=1$, and $j=-1$, would imply that $\mathfrak{C}_\varepsilon^1$ contains divisors of the form
\[
\sum_{i=1}^{g-2k-l}\tr_i+\sum_{i=1}^{k^\prime}\left(w_i^{(0)}+w_i^{(1)}\right) + (k-k^\prime-1)\infty^{(0)} + (l+1+k-k^\prime)\infty^{(1)}
\]
$0\leq k^\prime\leq k-1$. In particular, it would be true that $l+1+k-k^\prime\geq2$, which is impossible by the very definition of $\N_\varepsilon$. Since the set $\mathfrak{C}_\varepsilon^1$ can be examined similarly, the claim follows.

Hence, given $\sum\tr_j\in\mathfrak{C}_\varepsilon^k$, we can define $\Theta(\z;\sum\tr_j,\sum b_j^{(1)})$ via \eqref{theta-old}. By the very definition of $\mathfrak{C}_\varepsilon^k$, it holds that
\[
0< \left|\Theta\left(\infty^{(k)};\sum\tr_j,\sum b_j^{(1)}\right)\right| < \infty.
\]
Moreover, compactness of $\mathfrak{C}_\varepsilon^k$ and the continuity of $\sum\vec\Omega(\tr_j)$ with respect to $\sum\tr_j$ imply that there are uniform constants $c(\mathfrak{C}_\varepsilon^k)$ and $C(\mathfrak{C}_\varepsilon^k)$ such that
\[
0< c(\mathfrak{C}_\varepsilon^k) \leq \left|\Theta\left(\infty^{(k)};\sum\tr_j,\sum b_j^{(1)}\right)\right| \leq C(\mathfrak{C}_\varepsilon^k) < \infty
\]
for any $\sum\tr_j\in \mathfrak{C}_\varepsilon^k$. Analogously, observe that the absolute value of $\Theta_{n-1}/\Theta_n$ is bounded above in $\RS_{n,\epsilon}$ as it is a meromorphic function in $\widehat\RS$ with poles given by the divisor $\sum\tr_{n,j}$. The fact that this bound is uniform follows again from continuity of $\vec\Omega$ and compactness of  $\mathfrak{C}_\varepsilon^k$.

For future reference, let us point out that a slight modification of the above considerations and \eqref{ratioSx} lead to the estimates
\begin{equation}
\label{SnNormalized1}
\left|S_n/S_n(\infty)\right|,\left|hS_{n-1}^*/S_{n-1}^*(\infty)\right| \leq C_{\epsilon,\varepsilon,\rho}<\infty
\end{equation}
that holds uniformly in $D^*\setminus\cup_{e\in E}\{z:|z-e|<\epsilon\}$ for all $n\in\N_\varepsilon$.

\section{Riemann-Hilbert Problem}
\label{s:5}

In what follows, we adopt the notation $\phi^{m\sigma_3}$ for the diagonal matrix $\left(\begin{array}{cc} \phi^m & 0 \\ 0 & \phi^{-m} \end{array}\right)$, where $\sigma_3$ is the Pauli matrix $\displaystyle \sigma_3 = \left(\begin{array}{cc} 1&0\\0&-1 \end{array}\right)$. Moreover, for brevity, we put $\gamma_\Delta:=\cp(\Delta)$.

\subsection{Initial Riemann-Hilbert Problem}

Let $\mathcal{Y}$ be a $2\times2$ matrix function. Consider the following Riemann-Hilbert problem for $\mathcal{Y}$ (\rhy):
\begin{itemize}
\item[(a)] $\mathcal{Y}$ is analytic in $\C\setminus\Delta$ and $\displaystyle \lim_{z\to\infty} \mathcal{Y}(z)z^{-n\sigma_3} = \mathcal{I}$, where $\mathcal{I}$ is the identity matrix;
\item[(b)] $\mathcal{Y}$ has continuous traces on each $\Delta_k$ that satisfy $\displaystyle \mathcal{Y}_+ = \mathcal{Y}_- \left(\begin{array}{cc}1&\rho\\0&1\end{array}\right);$
\item[(c)] $\mathcal{Y}$ is bounded near each $e\in E\setminus A$ and the behavior of $\mathcal{Y}$ near each $e\in A$ is described by
\[
\left\{
\begin{array}{ll}
\displaystyle \mathcal{O}\left(\begin{array}{cc}1&|z-e|^{\alpha_e}\\1&|z-e|^{\alpha_e}\end{array}\right), & \mbox{if} \quad \alpha_e<0, \smallskip \\
\displaystyle \mathcal{O}\left(\begin{array}{cc}1&\log|z-e|\\1&\log|z-e|\end{array}\right), & \mbox{if} \quad \alpha_e=0,\smallskip \\
\displaystyle \mathcal{O}\left(\begin{array}{cc}1&1\\1&1\end{array}\right), & \mbox{if} \quad \alpha_e>0,
\end{array}
\right. \quad \mbox{as} \quad D^*\ni z\to e.
\]
\end{itemize}

The connection between \rhy~ and polynomials orthogonal with respect to $\rho$ was first realized by Fokas, Its, and Kitaev \cite{FIK91,FIK92} and lies in the following.
\begin{lemma}
\label{lem:rhy}
If a solution of \rhy~ exists then it is unique. Moreover, in this case $\deg(q_n)=n$, $R_{n-1}(z)\sim z^{-n}$ as $z\to\infty$, and the solution of \rhy~ is given by
\begin{equation}
\label{eq:y}
\mathcal{Y} = \left(\begin{array}{cc}
q_n & R_n \\
m_{n-1}q_{n-1} & m_{n-1}R_{n-1}
\end{array}\right),
\end{equation}
where $m_n$ is a constant such that $m_{n-1}R_{n-1}(z)=z^{-n}[1+o(1)]$ near infinity. Conversely, if $\deg(q_n)=n$ and $R_{n-1}(z)\sim z^{-n}$ as $z\to\infty$, then $\mathcal{Y}$ defined in \eqref{eq:y} solves \rhy~.
\end{lemma}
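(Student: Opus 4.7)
The plan is to carry out the classical Fokas--Its--Kitaev argument. Its backbone is the identity $\det\mathcal{Y}\equiv 1$: the jump matrix in (b) has unit determinant, so $\det\mathcal{Y}$ has no jump across $\Delta$; by (c), each isolated singularity at an endpoint $e\in E$ is at most $\mathcal{O}(|z-e|^{\alpha_e})$ with $\alpha_e>-1$ (or is of logarithmic type), hence removable by Riemann's theorem; and the normalization in (a) forces $\det\mathcal{Y}(z)\to 1$ as $z\to\infty$. Liouville then yields $\det\mathcal{Y}\equiv 1$. Uniqueness follows immediately: if $\mathcal{Y}_1,\mathcal{Y}_2$ both solve \rhy, then $\mathcal{Y}_1\mathcal{Y}_2^{-1}$ is well-defined, its jumps cancel ($VV^{-1}=\mathcal{I}$), its endpoint singularities are removable by the same argument, and it tends to $\mathcal{I}$ at infinity, so $\mathcal{Y}_1\equiv\mathcal{Y}_2$.

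To extract the explicit form of a solution I would analyse the two columns separately. Since the first column of the jump matrix is $(1,0)^T$, the entries $Y_{11}$ and $Y_{21}$ are jumpless on $\Delta$, extend to entire functions by the endpoint analysis, and from $\mathcal{Y}z^{-n\sigma_3}\to\mathcal{I}$ are polynomials with $Y_{11}$ monic of degree $n$ and $\deg Y_{21}\leq n-1$. For the second column, the jumps combined with decay at infinity force, via Sokhotski--Plemelj, the representations
\[
Y_{12}(z)=\int_\Delta\frac{Y_{11}(t)\rho(t)}{t-z}\frac{dt}{2\pi i},\qquad Y_{22}(z)=\int_\Delta\frac{Y_{21}(t)\rho(t)}{t-z}\frac{dt}{2\pi i}.
\]
Expanding at infinity and matching against $Y_{12}=\mathcal{O}(z^{-n-1})$ and $Y_{22}=z^{-n}+\mathcal{O}(z^{-n-1})$ produces the moment conditions $\int_\Delta Y_{11}t^k\rho\,dt=0$ for $k=0,\ldots,n-1$, $\int_\Delta Y_{21}t^k\rho\,dt=0$ for $k=0,\ldots,n-2$, and $\int_\Delta Y_{21}t^{n-1}\rho\,dt=-2\pi i$.

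Hence $Y_{11}$ is a monic polynomial of degree $n$ solving \eqref{linsys}, so by the paper's description of solutions $Y_{11}=\ell(z)q_n(z)$ with $\ell$ monic. The main obstacle, and the place where the normality of $n$ enters, is upgrading this to $\ell\equiv 1$. I plan to argue by contradiction: if $\ell$ has a zero $z_0$, write $\ell(t)=(t-z_0)\ell_1(t)$; then $Y_{11}(z_0)=0$ and $\det\mathcal{Y}\equiv 1$ force $Y_{12}(z_0)Y_{21}(z_0)=-1$ (interpreting $Y_{12}(z_0)$ as the common boundary trace if $z_0\in\Delta$, which is legitimate since $Y_{12}$ becomes continuous across $\Delta$ at zeros of $Y_{11}$), yet the Cauchy representation combined with the orthogonality of $q_n$ against polynomials of degree $\leq n-1$ gives
\[
Y_{12}(z_0)=\int_\Delta \ell_1(t)q_n(t)\rho(t)\frac{dt}{2\pi i}=0,
\]
a contradiction. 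Hence $\ell\equiv 1$, so $Y_{11}=q_n$, $\deg q_n=n$, and $Y_{12}=R_n$. For the second row, the moment conditions analogously place $Y_{21}=l(z)q_{n-1}(z)$ for some polynomial $l$. To pin $l$ down I would rewrite $\det\mathcal{Y}\equiv 1$ as a Bezout relation: substituting the Cauchy representations into $q_nY_{22}-R_nY_{21}\equiv 1$ and performing the standard algebraic manipulation on the integrand $(q_n(z)Y_{21}(t)-q_n(t)Y_{21}(z))/(t-z)$ produces $q_n(z)B(z)-Y_{21}(z)p_n(z)=-1$ for some polynomial $B$ with $\deg B,\deg Y_{21}<n$. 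The coprimality $\gcd(q_n,p_n)=1$, automatic from the minimality of $q_n$ once $n$ is normal, makes this polynomial solution unique. The Wronskian identity $q_np_{n-1}-q_{n-1}p_n\equiv q_{n-1}R_n-q_nR_{n-1}$, a constant equal to $-\lim_{z\to\infty}z^nR_{n-1}(z)$ by direct expansion at infinity, then shows that $Y_{21}=m_{n-1}q_{n-1}$ with $m_{n-1}=1/\lim z^nR_{n-1}$ solves the Bezout equation; by uniqueness it is the solution. Finiteness and nonvanishing of $m_{n-1}$ force $R_{n-1}\sim z^{-n}$, and $Y_{22}=m_{n-1}R_{n-1}$ follows from the Cauchy representation.

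For the converse direction, under the hypotheses $\deg q_n=n$ and $R_{n-1}\sim z^{-n}$, I would directly verify that the matrix \eqref{eq:y} satisfies conditions (a)--(c) of \rhy: analyticity off $\Delta$ is built into the definitions of $q_n,q_{n-1},R_n,R_{n-1}$; the prescribed asymptotics at infinity follow from $\deg q_n=n$, $R_n=\mathcal{O}(z^{-n-1})$, and the normalization of $m_{n-1}$; the jump relation (b) is the Sokhotski--Plemelj formula applied to the Cauchy integrals $R_n$ and $R_{n-1}$; and the endpoint behavior (c) is the classical local analysis of Cauchy integrals of densities with algebro-logarithmic singularities.
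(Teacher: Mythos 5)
Your skeleton is the classical Fokas--Its--Kitaev argument, which is in substance what the paper does (it delegates uniqueness and the identification of the entries to \cite{KMcLVAV04} and \cite{BY10}, and writes out only the step that is specific to this setting). That step is precisely where your proposal has a genuine gap: in the converse direction, \rhy(c) requires $\mathcal{Y}$ to be \emph{bounded} near every trivalent point $e\in E\setminus A$, and this is not ``classical local analysis of Cauchy integrals with algebro-logarithmic densities''. The Cauchy transform of a density that is merely analytic and non-vanishing on each of the three arcs meeting at $e$ has a logarithmic singularity at $e$ coming from each arc, so for a general density the second column of \eqref{eq:y} would violate (c). Boundedness of $R_n$ and $R_{n-1}$ at $e$ holds only because of the cancellation built into the class $\W$, namely $\rho_{|\Delta_{e,1}}+\rho_{|\Delta_{e,2}}+\rho_{|\Delta_{e,3}}\equiv0$ in \eqref{rhoSum}, which combines the three logarithms into bounded branches of $\arg(z-e)$; this verification is essentially the content of the paper's proof and must be invoked explicitly.

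In the direct part, your first-row argument is sound, but let $z_0$ possibly be a point of $E$: there the traces need not be continuous, so instead of evaluating boundary values write $Y_{12}(z)=(z-z_0)\int_\Delta \ell_1(t)q_n(t)\rho(t)\frac{dt}{2\pi i(t-z)}$ and use $\alpha_e>-1$ to conclude $Y_{11}Y_{22}\to0$ and $Y_{12}Y_{21}\to0$ as $z\to z_0$, contradicting $\det\mathcal{Y}\equiv1$. The second-row treatment via the Bezout relation is workable but, as written, circular: you introduce $m_{n-1}=1/\lim_{z\to\infty} z^nR_{n-1}(z)$ and then assert that its finiteness and nonvanishing ``force'' $R_{n-1}\sim z^{-n}$, whereas that nonvanishing is exactly what must be proved. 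The repair is contained in your own ingredients: once $\deg(q_n)=n$ and $\gcd(q_n,p_n)=1$, the Wronskian constant $q_np_{n-1}-q_{n-1}p_n$ cannot vanish (otherwise $q_n$ would divide $q_{n-1}$, impossible for degree reasons), and since it equals $-\lim z^nR_{n-1}$ this simultaneously gives $R_{n-1}\sim z^{-n}$ and a well-defined $m_{n-1}$. Alternatively, and more economically, repeat your zero-of-$\ell$ trick on the second row: writing $Y_{21}=l\,q_{n-1}$, a zero $z_0$ of $l$ forces $Y_{21}(z_0)=Y_{22}(z_0)=0$, contradicting $\det\mathcal{Y}\equiv1$, so $l$ is a constant $m_{n-1}$, and the moment condition $\int_\Delta Y_{21}t^{n-1}\rho\,dt=-2\pi i$ then yields $\int_\Delta q_{n-1}t^{n-1}\rho\,dt\neq0$, i.e. $R_{n-1}\sim z^{-n}$, together with the stated normalization of $m_{n-1}$.
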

\begin{proof}
In the case when $\Delta=[-1,1]$ and $\rho>0$ on $\Delta$ this lemma has been proven in \cite[Lemma~2.3]{KMcLVAV04}. It has been explained in \cite{BY10} that the lemma translates without change to the case of a general closed analytic arc and a general analytic non-vanishing weight $\rho$, and yields the uniqueness of the solution of \rhy~ whenever the latter exists. For a general contour $\Delta$ the claim follows from the fact that $R_n=\sum_k R_{nk}$, where
\[
R_{nk}(z) := \int_{\Delta_k}\frac{q_n(t)\rho(t)}{t-z}\frac{dt}{2\pi i} = \int_{\Delta_k}\frac{q_n(t) w_k(t)(t-a)^{\alpha_a}(t-b)^{\alpha_b}}{t-z}\frac{dt}{2\pi i}
\]
and therefore the behavior of $R_n$ near $e\in A$ is deduced from the behavior $R_{nk}$ there. On the other hand, for each arc $\Delta_{e,j}$ incident with $e\in E\setminus A$ (see notation in \eqref{fSum}), the respective function $R_{nk}$ behaves as \cite[Section~8.1]{Gakhov}
\[
\frac{\rho_{e,j}(e)}{2\pi i}\log(z-e)+R_{e,j}^*(z),
\]
where the function $R_{e,j}^*$ has a definite limit at $e$ and the logarithm is holomorphic outside of $\Delta_{e,j}$. Using \eqref{rhoSum}, we get that
\[
R(z) = \frac{\rho_{e,1}(e)}{2\pi}\arg_{e,1}(z-e) + \frac{\rho_{e,2}(e)}{2\pi}\arg_{e,2}(z-e) + \frac{\rho_{e,3}(e)}{2\pi}\arg_{e,3}(z-e) + R_e^*(z),
\]
where $R_e^*$ has a definite limit at $e$ and $\arg_{e,j}(z-e)$ has the branch cut along $\Delta_{e,j}$. Thus, $\mathcal{Y}$ is bounded in the vicinity of each $e\in E\setminus A$.

Suppose now that the solution, say $\mathcal{Y}=[\mathcal{Y}_{jk}]_{j,k=1}^2$, of \rhy~ exists. Then $\mathcal{Y}_{11}=z^n+${ lower order terms} by the normalization in \rhy(a). Moreover, by \rhy(b), $\mathcal{Y}_{11}$ has no jump on $\Delta$ and hence is holomorphic in the whole complex plane. Thus, $\mathcal{Y}_{11}$ is necessarily a polynomial of degree $n$ by Liouville's theorem. Further, since $\mathcal{Y}_{12}=\mathcal{O}(z^{-n-1})$ and satisfies \rhy(b), it holds that $\mathcal{Y}_{12}$ is the Cauchy transform of $\mathcal{Y}_{11}\rho$. From the latter, we easily deduce that $\mathcal{Y}_{11}$ satisfies orthogonality relations \eqref{ortho}. Applying the same arguments to the second row of $\mathcal{Y}$, we obtain that $\mathcal{Y}_{21}=q_{n-1}$ and $\mathcal{Y}_{22}=m_{n-1}R_{n-1}$ with $m_{n-1}$ well-defined.

Conversely, let $\deg(q_n)=n$ and $R_{n-1}(z)=\mathcal{O}(z^{-n})$ as $z\to\infty$. Then it can be easily checked by the direct examination  of \rhy(a)--(c) that $\mathcal{Y}$, given by \eqref{eq:y}, solves \rhy.
\end{proof}

\subsection{Renormalized Riemann-Hilbert Problem}

Suppose now that \rhy~ is solvable and $\mathcal{Y}$ is the solution. Define
\begin{equation}
\label{eq:t}
\mathcal{T} := \gamma_\Delta^{-n\sigma_3}\mathcal{Y}\map^{-n\sigma_3},
\end{equation}
where, as before, we use the same symbol $\map$ for the pull-back function of $\map$ from $D^{(0)}$. By \eqref{defmap} it holds that $\lim_{z\to\infty}z/\map(z)=\gamma_\Delta$ and therefore
\begin{equation}
\label{t1}
\lim_{z\to\infty} \mathcal{T}(z) = \lim_{z\to\infty} \gamma_\Delta^{-n\sigma_3}\mathcal{Y}(z)z^{-n\sigma_3}\left(z/\map(z)\right)^{n\sigma_3} = \mathcal{I}.
\end{equation}
Moreover, it holds by \eqref{jumponab} that
\begin{equation}
\label{t2}
(\map^+)^{-n\sigma_3} = (\map^-)^{-n\sigma_3}e^{-2\pi in\omega_k\sigma_3}
\end{equation}
on each $\Delta_k^a$. Finally, on each $\Delta_k$ we have that
\begin{eqnarray}
(\map^-)^{-n\sigma_3}\left(\begin{array}{cc}1&\rho\\0&1\end{array}\right)(\map^+)^{-n\sigma_3} &=& \left(\begin{array}{cc}(\map^-/\map^+)^n & (\map^-\map^+)^n\rho \smallskip \\ 0 & (\map^+/\map^-)^n \end{array}\right) \nonumber \\
\label{t3}
&=& \left(\begin{array}{cc} e^{2\pi in\delta_k}(\map^+)^{-2n} & e^{2\pi in\delta_k}\rho \smallskip \\ 0 & e^{2\pi in\delta_k}(\map^-)^{-2n} \end{array}\right),
\end{eqnarray}
where the second equality holds again by \eqref{jumponab} and $\delta_k$ are defined right after \eqref{jumponab}. Combining \eqref{t1}---\eqref{t3}, we see that $\mathcal{T}$ solves the following Riemann-Hilbert problem (\rht):
\begin{itemize}
\item[(a)] $\mathcal{T}$ is analytic in $D_a$ and $\mathcal{T}(\infty)=\mathcal{I}$;
\item[(b)] $\mathcal{T}$ has continuous traces on $\bigcup \Delta_k\cup\bigcup \Delta_k^a$ that satisfy
\[
\displaystyle \mathcal{T}_+ = \mathcal{T}_- \left\{
\begin{array}{ll}
e^{-2\pi in\omega_k\sigma_3} & \mbox{on each} \quad \Delta_k^a, \bigskip \\
\left(\begin{array}{cc} e^{2\pi in\delta_k}(\map^+)^{-2n} & e^{2\pi in\delta_k}\rho \smallskip \\ 0 & e^{2\pi in\delta_k}(\map^-)^{-2n} \end{array}\right) & \mbox{on each} \quad \Delta_k;
\end{array}
\right.
\]
\item[(c)] $\mathcal{T}$ has the behavior near each $e\in E$ as described in \rhy(c) only with $D^*$ replaced by $D_a$.
\end{itemize}

Trivially, the following lemma holds.

\begin{lemma}
\label{lem:rht}
\rht~ is solvable if and only if \rhy~ is solvable. When solutions of \rht~ and \rhy~ exist, they are unique and connected by \eqref{eq:t}.
\end{lemma}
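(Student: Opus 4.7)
The plan is to establish that the map $\mathcal{Y} \mapsto \mathcal{T} := \gamma_\Delta^{-n\sigma_3}\mathcal{Y}\map^{-n\sigma_3}$ is a bijection between solutions of \rhy~ and solutions of \rht. Since the direct implication ($\mathcal{Y}$ solves \rhy~$\Rightarrow$ $\mathcal{T}$ solves \rht) is precisely what is verified by the computations in \eqref{t1}--\eqref{t3} preceding the statement of \rht, together with the observation that $\map$, viewed as the pull-back from $D^{(0)}$, is continuous, bounded, and non-vanishing at every $e \in E$ (indeed $\map = \exp\{\int h\}$ and $h=\sqrt{B/A}$ has an integrable singularity at every branch point of $\RS$), so that conjugation by $\map^{-n\sigma_3}$ does not alter the local behavior in \rhy(c). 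The remaining content is the converse and uniqueness.

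For the converse, I would start from a solution $\mathcal{T}$ of \rht~ and \emph{define} $\mathcal{Y} := \gamma_\Delta^{n\sigma_3}\mathcal{T}\map^{n\sigma_3}$ on $D_a$. The first task is to show that $\mathcal{Y}$ extends analytically across each $\Delta_k^a$. Using the first relation in \eqref{jumponab}, we have $(\map^+)^{n\sigma_3} = e^{2\pi i n \omega_k \sigma_3}(\map^-)^{n\sigma_3}$ on $\Delta_k^a$, which combined with the jump $\mathcal{T}_+ = \mathcal{T}_- e^{-2\pi i n\omega_k\sigma_3}$ from \rht(b) yields $\mathcal{Y}_+ = \mathcal{Y}_-$ on $\Delta_k^a$; therefore $\mathcal{Y}$ extends holomorphically across each such arc, proving the analyticity condition in \rhy(a) on $\overline\C \setminus \Delta$. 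The jump of $\mathcal{Y}$ on $\Delta_k$ is recovered by inverting \eqref{t3}: multiplying the \rht(b) jump on $\Delta_k$ from the left by $(\map^-)^{n\sigma_3}$ and from the right by $(\map^+)^{-n\sigma_3}$ (using $\map^-\map^+ = e^{2\pi i \delta_k}$ from \eqref{jumponab}) produces exactly $\left(\begin{smallmatrix} 1 & \rho \\ 0 & 1 \end{smallmatrix}\right)$, confirming \rhy(b). The normalization \rhy(a) at infinity follows from $\map(z)/z \to \gamma_\Delta^{-1}$, and the endpoint behavior \rhy(c) is inherited from that of $\mathcal{T}$ because $\map$ is bounded and non-vanishing at each $e \in E$.

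Finally, uniqueness in \rht~ is immediate: any two solutions $\mathcal{T}, \widetilde{\mathcal{T}}$ produce via $\mathcal{Y} = \gamma_\Delta^{n\sigma_3}\mathcal{T}\map^{n\sigma_3}$ two solutions of \rhy, which must coincide by Lemma~\ref{lem:rhy}; then invertibility of $\gamma_\Delta^{n\sigma_3}$ and $\map^{n\sigma_3}$ forces $\mathcal{T} = \widetilde{\mathcal{T}}$. I do not expect a genuine obstacle here, since every ingredient is a direct consequence of properties of $\map$ already catalogued in Section~\ref{ss:gd}; the only small care needed is the verification that $\map$ has no zeros or poles at points of $E$ (so that the conjugation preserves the local growth class at branch points) and that the two jump conditions in \rht(b)—on $\Delta_k^a$ and on $\Delta_k$—cancel exactly against the corresponding jumps of $\map^{n\sigma_3}$.
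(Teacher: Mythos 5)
Your proposal is correct and is essentially the paper's own (implicit) argument: the paper states Lemma~\ref{lem:rht} without proof precisely because \eqref{eq:t} is an explicitly invertible transformation, with the forward direction given by \eqref{t1}--\eqref{t3} and the converse and uniqueness obtained exactly as you do — invert the diagonal factors, cancel the jumps via \eqref{jumponab}, note that $\map$ is bounded and non-vanishing at each $e\in E$ so the local growth classes in (c) are preserved, and invoke Lemma~\ref{lem:rhy}. One transcription slip: on $\Delta_k$ the jump of $\mathcal{Y}=\gamma_\Delta^{n\sigma_3}\mathcal{T}\map^{n\sigma_3}$ is the \rht(b) jump multiplied on the left by $(\map^-)^{-n\sigma_3}$ and on the right by $(\map^+)^{n\sigma_3}$ (you wrote the opposite exponents, which would not give the identity diagonal); with the correct signs the product is indeed $\bigl(\begin{smallmatrix}1&\rho\\0&1\end{smallmatrix}\bigr)$, so the conclusion stands.
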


\subsection{Opening of Lenses}
\label{ss:ol}

As is standard in the Riemann-Hilbert approach, the second transformation of ~\rhy~ is based on the following factorization of the jump matrix \eqref{t3} in \rht(b):
\[
\left(\begin{array}{cc} 1 & 0 \\ (\map^-)^{-2n}/\rho & 1 \end{array}\right) \left(\begin{array}{cc} 0 & e^{2\pi in\delta_k}\rho \\ -e^{-2\pi in\delta_k}/\rho & 0 \end{array}\right) \left(\begin{array}{cc} 1 & 0 \\ (\map^+)^{-2n}/\rho & 1 \end{array}\right),
\]
where we used \eqref{jumponab}. This factorization allows us to consider a Riemann-Hilbert problem with jumps on a lens-shaped contour $\Sigma$ (see the right-hand part of Figure~\ref{fig:2}), which is defined as follows.
\begin{figure}[!ht]
\centering
\includegraphics[scale=.5]{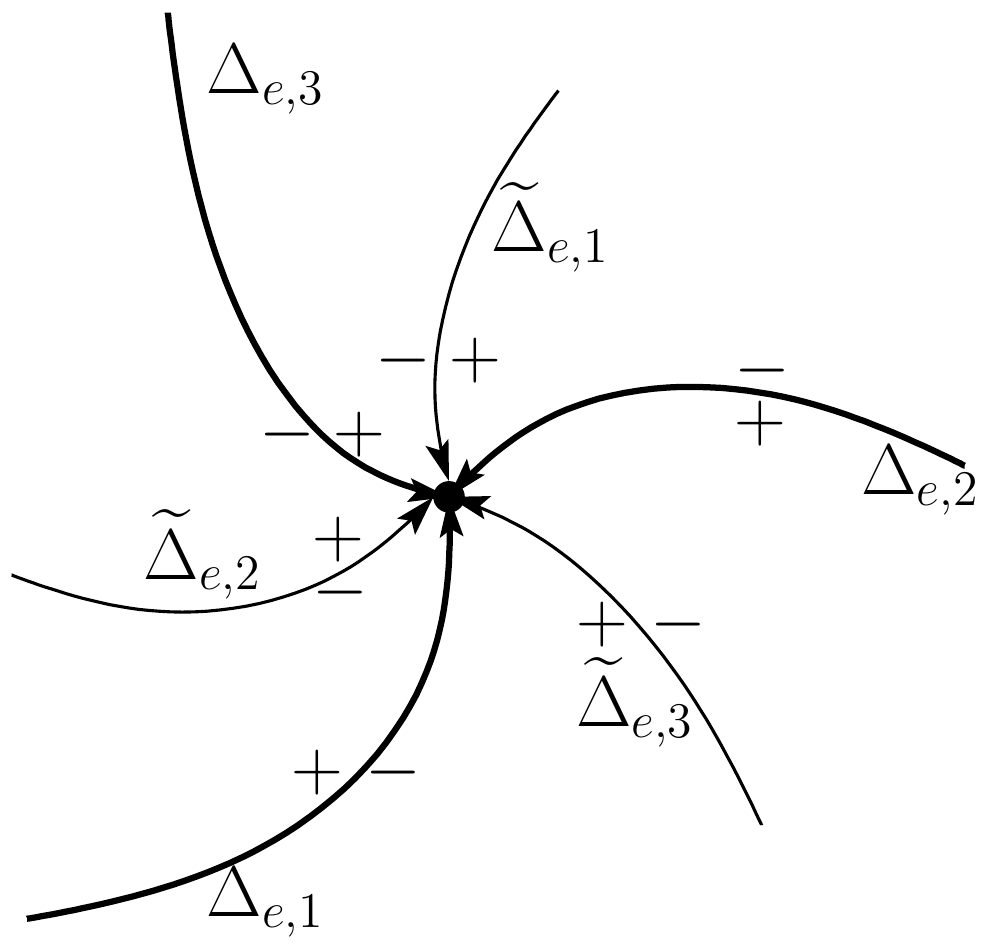}
\hspace{.5in}
\includegraphics[scale=.6]{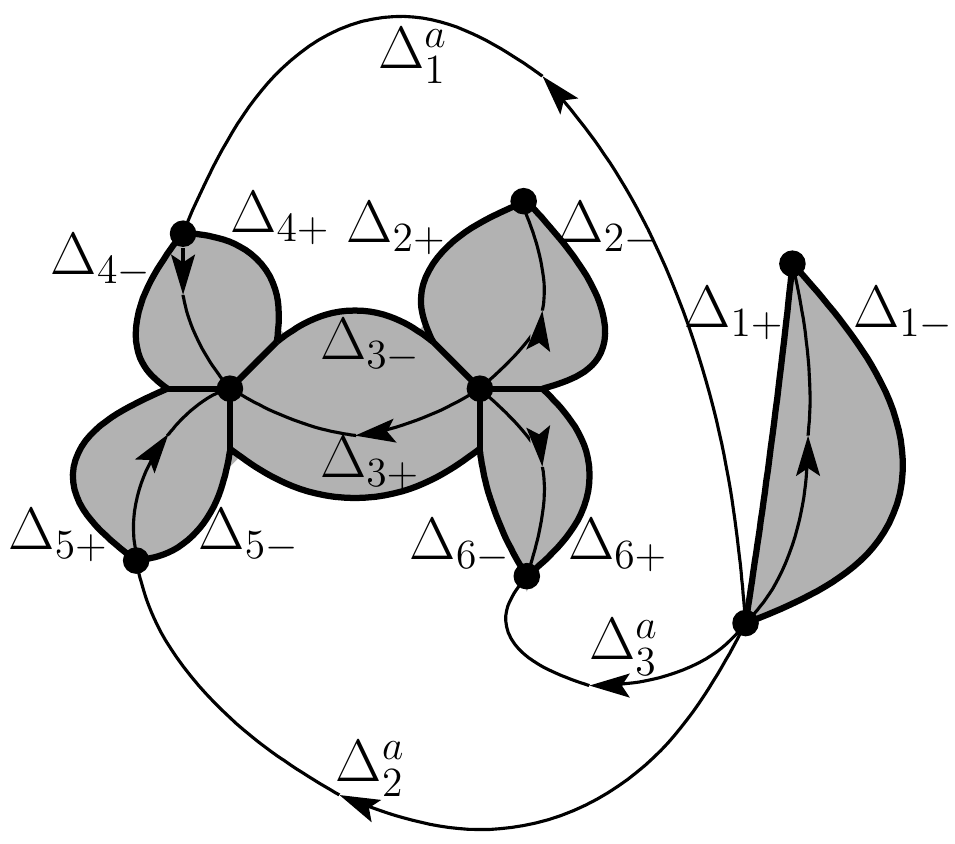}
\caption{\small The left figure: the arcs $\widetilde\Delta_{e,j}$ introduced in the construction of the lens $\Sigma$ near a trivalent end. The right figure: the full lens $\Sigma$ consisting of the arcs $\Delta_k$(not labeled), the arcs $\Delta_k^a$, and the outer arcs $\Delta_{k\pm}$ (the choice of $\pm$ is determined by the chosen orientation of the corresponding arc $\Delta_k$), and the domains $\Omega_{k\pm}$ (shaded areas, not labeled).}
\label{fig:2}
\end{figure}
For each trivalent end $e$, let $\Delta_{e,j}$, $j\in\{1,2,3\}$, be the arcs in $\Delta$ incident with $e$. For definiteness, assume that they are ordered counter-clockwise; that is, when encircling $e$ in the counter-clockwise direction we first encounter $\Delta_{e,1}$, then $\Delta_{e,2}$, and then $\Delta_{e,3}$. Assume also that $\epsilon>0$ is small enough so that the intersection of the disk centered at $e$ of radius $\epsilon$, say $U_{e,\epsilon}$, with any $\Delta_{e,j}$ is a Jordan arc and the disk itself is contained in the domain of holomorphy of each $w_{e,j}$ (see \eqref{rho}). Firstly, let $\widetilde\Delta_{e,j}$, $j\in\{1,2,3\}$, be three open analytic arcs incident with $e$  and some points on the circumference of $U_{e,\epsilon}$ placed so that the arc $\widetilde\Delta_{e,j}$ splits the sector formed by $\Delta_{e,j-1}$ and $\Delta_{e,j+1}$, where we understand $j\pm1$ cyclicly within the set $\{1,2,3\}$. We orient the arcs $\widetilde\Delta_{e,j}$ so that all the arcs including $\Delta_{e,j}$ are simultaneously oriented either towards $e$ or away from $e$ (see the left-hand side of Figure~\ref{fig:2}). Secondly, let $\Delta_k$ be an arc with one univalent and one trivalent endpoint, say $e$. Then we chose open analytic arcs $\Delta_{k\pm}\subset D_a$ so that $\Delta_k\cup\Delta_{k+}\cup\widetilde\Delta_{e,j}$ and $\Delta_k\cup\Delta_{k-}\cup\widetilde\Delta_{e,l}$ ($l=j+1$ if $\Delta_k$ is oriented towards $e$ and $l=j-1$ otherwise) delimit two simply connected domains, say $\Omega_{k+}$ and $\Omega_{k-}$, that lie to the left and right of $\Delta_k$ (see the right-hand side of Figure~\ref{fig:2}). We oriented $\Delta_{k\pm}$ the way $\Delta_k$ is oriented and assume that they lie within the domain of holomorphy of $w_k$. The cases where $\Delta_k$ is incident with two univalent ends or two trivalent ends, we treat similarly with the obvious modifications. Finally, we require all the arcs $\Delta_k$, $\Delta_{k\pm}$, $\Delta_k^a$, and $\widetilde\Delta_{e,j}$ to be mutually disjoint, in particular, we have that $\Delta_k^a\cap\Omega_{j\pm}=\varnothing$ for all possible pairs of $k$ and~$j$.

Suppose now that \rht~ is solvable and $\mathcal{T}$ is the solution. Define $\mathcal{S}$ on $\overline\C\setminus\Sigma$ by
\begin{equation}
\label{eq:s}
\mathcal{S}:= \left\{
\begin{array}{ll}
\mathcal{T} \left(\begin{array}{cc}1&0 \\ \mp \map^{-2n}/\rho & 1 \end{array}\right), & \mbox{in each} \quad \Omega_{k\pm}, \smallskip \\
\mathcal{T}, & \mbox{outside the lens} \ \Sigma.
\end{array}
\right.
\end{equation}
Then $\mathscr{S}$ solves the following Riemann-Hilbert problem (\rhs):
\begin{itemize}
\item[(a)] $\mathcal{S}$ is analytic in $\overline\C\setminus\Sigma$ and $\mathcal{S}(\infty)=\mathcal{I}$;
\item[(b)] $\mathcal{S}$ has continuous traces on $\Sigma$ that satisfy \smallskip
\begin{itemize}
\item[(1)] $\displaystyle \mathcal{S}_+=\mathcal{S}_-e^{-2\pi in\omega_k\sigma_3}$ on each $\Delta_k^a$; \smallskip
\item[(2)] $\displaystyle \mathcal{S}_+=\mathcal{S}_-\left(\begin{array}{cc} 0 & e^{2\pi in\delta_k}\rho \\ -e^{-2\pi in\delta_k}/\rho & 0 \end{array}\right)$ on each $\Delta_k$; \smallskip
\item[(3)] $\displaystyle \mathcal{S}_+=\mathcal{S}_-\left(\begin{array}{cc} 1 & 0 \\ \map^{-2n}/\rho & 1 \end{array}\right)$ on each $\Delta_{k\pm}$; \smallskip
\item[(4)] $\displaystyle \mathcal{S}_+=\mathcal{S}_-\left(\begin{array}{cc} 1 & 0 \\ \map^{-2n}(1/\rho_{e,j-1}+1/\rho_{e,j+1}) & 1 \end{array}\right)$ on each $\widetilde\Delta_{e,j}$; \smallskip
\end{itemize}
\item[(c)] $\mathcal{S}$ is bounded near each $e\in E\setminus A$ and the behavior of $\mathcal{S}$ near each $e\in A$ is described by
\[
\left\{
\begin{array}{lll}
\displaystyle \mathcal{O}\left(\begin{array}{cc}1&|z-e|^{\alpha_e}\\1&|z-e|^{\alpha_e}\end{array}\right), & \mbox{if} \quad \alpha_e<0, & \mbox{as} \quad \C\setminus\Sigma\ni z\to e, \smallskip \\
\displaystyle \mathcal{O}\left(\begin{array}{cc}\log|z-e|&\log|z-e|\\\log|z-e|&\log|z-e|\end{array}\right), & \mbox{if} \quad \alpha_e=0,  & \mbox{as} \quad \C\setminus\Sigma\ni z\to e, \smallskip \\
\displaystyle \mathcal{O}\left(\begin{array}{cc}1&1\\1&1\end{array}\right), & \mbox{if} \quad \alpha_e>0, & \mbox{as} \ z\to e \ \mbox{outside the lens} \ \Sigma, \smallskip \\
\displaystyle \mathcal{O}\left(\begin{array}{cc}|z-e|^{-\alpha_e}&1\\|z-e|^{-\alpha_e}&1\end{array}\right), & \mbox{if} \quad \alpha_e>0, & \mbox{as} \ z\to e \ \mbox{inside the lens} \ \Sigma.
\end{array}
\right.
\]
\end{itemize}

Then the following lemma holds.

\begin{lemma}
\label{lem:rhs}
\rhs~ is solvable if and only if \rht~ is solvable. When solutions of \rhs~ and \rht~ exist, they are unique and connected by \eqref{eq:s}.
\end{lemma}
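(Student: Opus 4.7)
The plan is to verify the equivalence by direct computation, since the right multiplication \eqref{eq:s} by the triangular factors $\bigl(\begin{smallmatrix} 1 & 0 \\ \mp\map^{-2n}/\rho & 1 \end{smallmatrix}\bigr)$ is a local, pointwise invertible transformation in each lens domain $\Omega_{k\pm}$. First I would check that if $\mathcal{T}$ solves \rht, then $\mathcal{S}$ defined by \eqref{eq:s} solves \rhs. Analyticity of $\mathcal{S}$ in $\overline\C\setminus\Sigma$ is clear because the triangular factors are holomorphic in $\Omega_{k\pm}$: the arcs $\Delta_k^a$ are kept outside the lens, $\map$ is analytic and non-vanishing in $D_a$, and by the construction in Section~\ref{ss:ol} the lens $\Omega_{k\pm}$ lies inside the domain of holomorphy of $w_k$, where $\rho$ is also analytic and non-vanishing. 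The normalization $\mathcal{S}(\infty)=\mathcal{I}$ is inherited from $\mathcal{T}(\infty)=\mathcal{I}$ since each $\Omega_{k\pm}$ is bounded.

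Next, I would verify the jump relations in \rhs(b). On the arcs $\Delta_k^a$, which sit outside the lens, $\mathcal{S}=\mathcal{T}$ and \rhs(b)(1) follows from \rht(b). On the outer arcs $\Delta_{k\pm}$, $\mathcal{T}$ is analytic across, and reading off the one-sided appearance of the triangular factor yields \rhs(b)(3). On each $\Delta_k$ the key step is the algebraic factorization
\begin{equation*}
\begin{pmatrix} e^{2\pi in\delta_k}(\map^+)^{-2n} & e^{2\pi in\delta_k}\rho \\ 0 & e^{2\pi in\delta_k}(\map^-)^{-2n} \end{pmatrix} = \begin{pmatrix} 1 & 0 \\ (\map^-)^{-2n}/\rho & 1 \end{pmatrix}\begin{pmatrix} 0 & e^{2\pi in\delta_k}\rho \\ -e^{-2\pi in\delta_k}/\rho & 0 \end{pmatrix}\begin{pmatrix} 1 & 0 \\ (\map^+)^{-2n}/\rho & 1 \end{pmatrix},
\end{equation*}
whose verification reduces to the identity $\map^+\map^-\equiv e^{2\pi i\delta_k}$ on $\Delta_k$ supplied by \eqref{jumponab}. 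Finally, on each $\widetilde\Delta_{e,j}$ the matrix $\mathcal{T}$ is analytic (this arc lies in $D_a$ and no jump condition of \rht~ is imposed there), so the jump of $\mathcal{S}$ is the product of the inverse of the triangular factor coming from one adjacent lens region and the triangular factor of the other. Since the two adjacent lens regions belong to the arcs $\Delta_{e,j-1}$ and $\Delta_{e,j+1}$, and the orientation conventions are fixed in Section~\ref{ss:ol}, a short multiplication produces exactly $\bigl(\begin{smallmatrix} 1 & 0 \\ \map^{-2n}(1/\rho_{e,j-1}+1/\rho_{e,j+1}) & 1 \end{smallmatrix}\bigr)$, matching \rhs(b)(4).

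The subtlest point is the endpoint behavior \rhs(c). For $e\in A$ with $\alpha_e<0$, the extra factor $\map^{-2n}/\rho$ grows like $|z-e|^{-\alpha_e}$ and exactly compensates the $|z-e|^{\alpha_e}$ in the second column of $\mathcal{T}$, so the transformed first column remains $\mathcal{O}(1)$. For $\alpha_e=0$, the triangular factor is bounded and turns the $\mathcal{O}(1)$ first column of $\mathcal{T}$ into $\mathcal{O}(\log|z-e|)$. For $\alpha_e>0$, outside the lens nothing changes, while inside the lens the first column of $\mathcal{S}$ acquires a $|z-e|^{-\alpha_e}$ singularity inherited from $1/\rho$. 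For $e\in E\setminus A$, all $\alpha_e$ vanish, each $\rho_{|\Delta_k}$ is bounded and non-vanishing near $e$ by the definition of the class $\W$, and the local sum identity \eqref{rhoSum} guarantees that the combination appearing on $\widetilde\Delta_{e,j}$ produces at worst a log-type singularity compatible with the boundedness of $\mathcal{T}$ at $e$; hence $\mathcal{S}$ stays bounded near each trivalent end. I expect the main bookkeeping obstacle to be organizing the sectors and the three lenses at a trivalent end so that each case of \rhs(c) is certified on the correct side of $\Sigma$.

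The converse direction follows by inverting \eqref{eq:s} in each $\Omega_{k\pm}$ via right multiplication by $\bigl(\begin{smallmatrix} 1 & 0 \\ \pm\map^{-2n}/\rho & 1 \end{smallmatrix}\bigr)$ and repeating the checks above in reverse: each \rhs~ jump collapses to the corresponding \rht~ jump, the triangular factor is holomorphic in $\Omega_{k\pm}$ so analyticity of $\mathcal{T}$ is preserved, and the endpoint bounds transform back consistently. Uniqueness is then inherited along the chain: \rhy~ is uniquely solvable by Lemma~\ref{lem:rhy}, hence so is \rht~ by Lemma~\ref{lem:rht}, and therefore so is \rhs~ by the bijective correspondence \eqref{eq:s} just established.
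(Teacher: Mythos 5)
Your forward direction (from \rht~ to \rhs) is sound and matches the paper, and your verification of the jump factorization and of \rhs(c) from \rht(c) is essentially the routine part. The genuine gap is in your converse direction, in the single phrase ``the endpoint bounds transform back consistently.'' They do not, at least not by the naive computation you invoke: inverting \eqref{eq:s} inside the lens near an endpoint $e\in A$ adds to the first column of $\mathcal{S}$ the second column multiplied by $\map^{-2n}/\rho$. For $\alpha_e>0$ the second column of $\mathcal{S}$ is only $\mathcal{O}(1)$ there while $1/\rho\sim|z-e|^{-\alpha_e}$, so this inversion yields merely $\mathcal{T}^*_{j1}=\mathcal{O}(|z-e|^{-\alpha_e})$ inside the lens, and for $\alpha_e=0$ it yields only $\mathcal{O}(\log|z-e|)$ --- whereas \rht(c) (inherited from \rhy(c)) demands that the first column be $\mathcal{O}(1)$ near every $e\in A$. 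So the converse is not a mechanical reversal of your forward checks; an extra argument is needed to upgrade the weak a priori bounds to boundedness.

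The paper supplies exactly this missing step. One observes that each entry $\mathcal{T}^*_{j1}$ solves the scalar boundary value problem with jump $e^{-2\pi in\omega_k}$ on $\bigcup\Delta_k^a$ and $e^{2\pi in\delta_k}(\map^+)^{-2n}$ on $\bigcup\Delta_k$, whose canonical solution is $\map^{-n}$ by \eqref{jumponab}; hence $\phi_j:=\mathcal{T}^*_{j1}\map^n$ is analytic in $\C\setminus E$. The weak bounds obtained from inverting \eqref{eq:s} show that the isolated singularities of $\phi_j$ at the points of $E$ are at worst polar, and since $\phi_j$ is $\mathcal{O}(1)$ (or $\mathcal{O}(\log|z-e|)$) as $z\to e$ from outside the lens, these singularities must be removable. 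Therefore $\mathcal{T}^*_{j1}=\mathcal{O}(1)$ near each $e\in E$, which is what \rht(c) requires. Without this removable-singularity (Liouville-type) argument, or some equivalent, your converse direction --- and with it the ``if and only if'' and the uniqueness transfer through the chain \rhy$\to$\rht$\to$\rhs~ --- is not established.
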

\begin{proof}
By construction, the solution of \rht~ yields a solution of \rhs. Conversely, let $\mathscr{S}^*$ be a solution of \rhs. It can be readily verified that $\mathscr{T}^*$, obtained from $\mathscr{S}^*$ by inverting \eqref{eq:s}, satisfies \rht(a)-(b). Denote by $\mathscr{T}^*_{jk}$ the $jk$-entry of $\mathscr{T}^*$, $j,k\in\{1,2\}$. The appropriate behavior of $\mathscr{T}_{j2}^*$ near the points of $E$ follows immediately from \rhs(c) and \eqref{eq:s}. Thus, we only need to show that $\mathscr{T}^*_{j1}=\mathcal{O}(1)$ in the vicinity of $E$ and only for $e\in A$. Observe that by simply inverting transformation \eqref{eq:s}, we get that
\begin{equation}
\label{eq:neartheendentires}
\mathscr{T}^*_{j1}(z) = \left\{
\begin{array}{ll}
\mathcal{O}(1), & \mbox{if} \quad \alpha_e<0 \smallskip \\
\mathcal{O}(\log|z-e|), & \mbox{if} \quad \alpha_e=0, \smallskip \\
\mathcal{O}(|z-e|^{-\alpha_e}), & \mbox{if} \quad \alpha_e>0 \ \mbox{and} \ z \ \mbox{is inside the lens}, \smallskip \\
\mathcal{O}(1), & \mbox{if} \quad \alpha_e>0 \ \mbox{and} \ z \ \mbox{is outside the lens},
\end{array}
\right.
\end{equation}
for $j=1,2$. However, each $\mathscr{T}^*_{j1}$ solves the following scalar boundary value problem:
\begin{equation}
\label{eq:bvprn}
\phi^+ = \phi^-
\left\{
\begin{array}{ll}
e^{-2\pi in\omega_k} & \mbox{on} \quad\bigcup \Delta_k^a, \smallskip \\
e^{2\pi in\delta_k}(\map^+)^{-2n} & \mbox{on} \quad\bigcup \Delta_k,
\end{array}
\right.
\end{equation}
where $\phi$ is a holomorphic function in $D_a$. It can be easily checked using \eqref{jumponab} that $\map^{-n}$ is the canonical solution of \eqref{eq:bvprn}. Hence, the functions $\phi_j:=\mathscr{T}^*_{j1}\map^n$, $j=1,2$, are analytic in $\C\setminus E$. Moreover, according to (\ref{eq:neartheendentires}), the singularities of these functions at the points $e\in E$ cannot be essential, thus, they are either removable or polar. In fact, since $\phi_j(z)=\mathcal{O}(1)$ or $\phi_j(z)=\mathcal{O}(\log|z-e|)$ when $z$ approaches $e$ outside of the lens $\Sigma$, $\phi_j$ can have only removable singularities at these points. Hence, $\phi_j(z)=\mathcal{O}(1)$ and subsequently $\mathscr{T}^*_{j1}=\mathcal{O}(1)$  near each $e\in E$ (clearly, these functions have the form $q\map^{-n}$, where $q$ is any polynomials of degree at most $n$).
\end{proof}

\section{Asymptotic Analysis}
\label{s:6}

\subsection{Analysis in the Bulk}
\label{ss:6.1}

As $\Phi^{-2n}$ converges to zero geometrically fast away from $\Delta$, the second jump matrix in \rhs(b)~ is close to the identity on $\bigcup \Delta_{k\pm}$. Thus, the main term of the asymptotics for $\mathscr{S}$ in $D_a$ is determined by the following Riemann-Hilbert problem (\rhn):
\begin{itemize}
\item[(a)] $\mathcal{N}$ is analytic in $D_a$ and $\mathcal{N}(\infty)=\mathcal{I}$;
\item[(b)] $\mathcal{N}$ has continuous traces on $\bigcup \Delta_k\cup\bigcup \Delta_k^a$ that satisfy \rhs(b1)--(b2).
\end{itemize}

As usual, we denote by $S_n$ and $S_n^*$ the pull-back functions of $S_n$ on $\RS$ defined in Proposition~\ref{prop:Sn}.

\begin{lemma}
\label{lem:rhn}
If $n\in\N_\varepsilon$, then \rhn~ is solvable and the solution is given by
\begin{equation}
\label{eq:n}
\mathcal{N} = \left(\begin{array}{cc} 1/S_n(\infty) & 0 \smallskip \\ 0 & \gamma_\Delta/S_{n-1}^*(\infty) \end{array}\right) \left(\begin{array}{cc} S_n & hS_n^* \smallskip \\ S_{n-1}/\map & h\map S_{n-1}^* \end{array}\right).
\end{equation}
Moreover, $\det(\mathcal{N})\equiv1$ on $\C$, and it holds that $\mathcal{N}$ behaves like
\begin{equation}
\label{n-ends}
\left(\begin{array}{cc}\mathcal{O}\left(|z-e|^{-(2\alpha_e+1)/4}\right)&\mathcal{O}\left(|z-e|^{(2\alpha_e-1)/4}\right)\smallskip\\\mathcal{O}\left(|z-e|^{-(2\alpha_e+1)/4}\right)&\mathcal{O}\left(|z-e|^{(2\alpha_e-1)/4}\right)\end{array}\right)
\quad \mbox{and} \quad
\left[\mathcal{O}\left(|z-e|^{-1/4}\right)\right]_{j,k=1}^2
\end{equation}
$D_a\ni z\to e$ for univalent and trivalent ends $e\in E$, respectively.
\end{lemma}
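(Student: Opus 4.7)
The plan is to verify directly that the explicit matrix $\mathcal{N}$ defined by \eqref{eq:n} meets every requirement of RHP-$\mathcal{N}$, and then to derive uniqueness by a standard Liouville argument. For analyticity in $D_a = D^*\setminus\bigcup_k\Delta_k^a$, each entry is built from pull-backs of $S_n, S_n^*, \map, \map^*$ and $h$. By Proposition~\ref{prop:Sn} and Section~\ref{ss:gd} the pull-backs from $\RS^{(0)}$ are analytic in $D^*\setminus\bigcup_k\Delta_k^a$, and likewise for their $\cdot^*$-counterparts on $\RS^{(1)}$. The only delicate points are the interior zeros $\{b_j\}_{j=p-2m+1}^g$ of $B$ lying in $D^*$: at such a $b$ the function $S_n^*$ may have a pole of order $m_b/2$ (coming from the theta factor with pole divisor $\sum b_j^{(1)}$), but $h=\sqrt{B/A}$ has a zero of exactly order $m_b/2$ at $b$, so the second-column entries $hS_n^*$ and $h\map S_{n-1}^*$ extend analytically. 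The normalization $\mathcal{N}(\infty)=\mathcal{I}$ then follows from $\map(z)=z/\gamma_\Delta+\mathcal{O}(1)$ and $h(z)=1/z+\mathcal{O}(1/z^2)$; in particular $\mathcal{N}_{22}(\infty)=\gamma_\Delta(h\map)(\infty)=1$, and the off-diagonal entries vanish.

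For the jumps I treat $\Delta_k^a$ and $\Delta_k$ separately. On $\Delta_k^a$, the jump of $\map$ is $e^{2\pi i\omega_k}$ by \eqref{jumpPhiperoids}, while $S_n$ and $S_n^*$ pick up reciprocal factors $e^{\mp 2\pi in\omega_k}$ forced by the requirement that $S_n\map^n$ be meromorphic in $\RS\setminus L$; a column-by-column check produces exactly $\mathcal{N}_+=\mathcal{N}_-e^{-2\pi in\omega_k\sigma_3}$. On $\Delta_k$, the heart of the matter is \eqref{jumpSn}, namely $(S_n\map^n)^-=(\rho/h^+)(S_n\map^n)^+$ on $L\subset\RS$. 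To translate this jump across the oriented arc $\Delta_k\subset D^*$, I fix the convention that the $\RS^{(0)}$-side of $L_k$ corresponds to the $+$ side of $\Delta_k$ in $D^*$ and then apply the two involution identities $h^*=-h$ (forcing $h^+=-h^-$ on $\Delta_k$) and $\map\map^*=1$ in $D_a$ together with \eqref{jumponab} (yielding $\map^{*,-}=e^{-2\pi in\delta_k}\map^+$ on $\Delta_k$). Working this out for the $11$-entry gives $\rho\,S_n^+=e^{-2\pi in\delta_k}h^+S_n^{*,-}$, which is exactly the required relation $\mathcal{N}_{11}^+=-e^{-2\pi in\delta_k}\mathcal{N}_{12}^-/\rho$; the three remaining entries are verified by the same mechanism.

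The identity $\det\mathcal{N}\equiv 1$ is then automatic: both jump matrices in RHP-$\mathcal{N}$(b) have determinant $1$, so $\det\mathcal{N}$ extends analytically across $\bigcup_k(\Delta_k^a\cup\Delta_k)$. At each $e\in E$, the bounds \eqref{SnE} combined with the local expansions $h(z)\sim|z-a|^{-1/2}$ at $a\in A$ and $h(z)\sim|z-b|^{1/2}$ at trivalent $b$ give $\det\mathcal{N}=\mathcal{O}(|z-e|^{-1/2})$, hence $(z-e)\det\mathcal{N}\to 0$ and the singularity is removable; it follows that $\det\mathcal{N}$ is entire and bounded, equal to the value $1$ computed at infinity. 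The endpoint bounds \eqref{n-ends} come from the same calculation (using that the integer $m(e)\in\{0,1\}$ for $e\in\{a_j\}\cup\{b_j\}_{j=1}^{p-2m}$ only makes the true exponent larger than the one stated). Finally, uniqueness is the standard matrix Liouville argument: if $\mathcal{N}_1,\mathcal{N}_2$ are two solutions, then $\mathcal{N}_1\mathcal{N}_2^{-1}$ has no jumps, only removable singularities at $E$, and equals $\mathcal{I}$ at infinity, so $\mathcal{N}_1\equiv\mathcal{N}_2$.

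The main obstacle is bookkeeping rather than analysis: translating \eqref{jumpSn} from the cycle $L\subset\RS$ into a jump across $\Delta_k\subset D^*$ requires carefully tracking the identification of the $\RS^{(0)}$-side of $L_k$ with the $+$ side of $\Delta_k$ in $D^*$ and invoking the involution identities $\map\map^*=1$ and $h^*=-h$. Once this is in place, all four entries of the jump, the determinant identity, and the endpoint estimates reduce to direct algebraic calculations.
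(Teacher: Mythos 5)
Your verification is correct and follows essentially the same route as the paper's proof: a direct check of analyticity in $D_a$ (with the poles of $S_n^*$ at the interior points $b_{p-2m+1},\ldots,b_g$ cancelled by the zeros of $h$), the normalization at infinity via $(h\map)(\infty)=1/\gamma_\Delta$, the jumps on $\Delta_k^a$ and $\Delta_k$ obtained from \eqref{jumpSn}, \eqref{jumponab}, \eqref{Da} and $h^+=-h^-$, and then $\det(\mathcal{N})\equiv1$ and \eqref{n-ends} from the determinant-one jumps together with \eqref{SnE}. Two minor points: the relation you extract from \eqref{jumponab} should read $\map^{*,-}=e^{-2\pi i\delta_k}\map^+$ (the factor $n$ belongs only to the $n$-th powers), and your closing uniqueness paragraph is both unnecessary and not justified as written---\rhn~ imposes no growth conditions at the points of $E$, so the isolated singularities of $\mathcal{N}_1\mathcal{N}_2^{-1}$ there need not be removable (without endpoint conditions the solution is in fact not unique, and neither the lemma nor the paper's argument claims uniqueness).
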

\begin{proof}
Observe that whenever $n\in\N_\varepsilon$ it holds that $S_n(\infty)S_{n-1}^*(\infty)\neq0$ by the construction and therefore $\mathcal{N}$ is well-defined for such indices. Since $S_n$ and $h \map S_n^*$ are holomorphic function in $D_a$ by \eqref{SnE}, $\mathcal{N}$ is an analytic matrix function there. The normalization $\mathcal{N}(\infty)=\mathcal{I}$ follows from the analyticity of $S_n$ and $S_n^*$ at infinity and the fact that $(h\map)(z)=1/\gamma_\Delta+\mathcal{O}(1/z)$. Further, for any $\Delta_k^a$ we have that
\begin{eqnarray}
\mathcal{N}_+ &=&\left(\begin{array}{cc} 1/S_n(\infty) & 0 \smallskip \\ 0 & \gamma_\Delta/S_{n-1}^*(\infty) \end{array}\right) \left(\begin{array}{cc} S_n^-e^{-2\pi in\omega_k} & h(S_n^*)^-e^{2\pi in\omega_k} \smallskip \\ (S_{n-1}/\map)^-e^{-2\pi in\omega_k} & h(\map S_{n-1}^* )^-e^{2\pi in\omega_k}\end{array}\right) \nonumber \\
&=& \mathcal{N}_-e^{-2\pi in\omega_k\sigma_3}, \nonumber
\end{eqnarray}
where we used \eqref{jumpPhiperoids}, analyticity of $S_n\map^n$ across the $\mathbf{a}$-cycles, and the fact that $S_n^+/S_n^-=(S_n^*)^-/(S_n^*)^+$. Moreover, for each $\Delta_k$ it holds that
\[
S_n^\pm = (S_n^*)^\mp\exp\big\{-2\pi in\delta_k\big\}\big(h^+/\rho\big)
\]
by \eqref{jumponab}, \eqref{jumpSn}, and \eqref{Da}. Then
\begin{eqnarray}
\mathcal{N}_+  &=& \left(\begin{array}{cc} 1/S_n(\infty) & 0 \smallskip \\ 0 & \gamma_\Delta/S_{n-1}^*(\infty) \end{array}\right) \left(\begin{array}{cc} -(hS_n^*)^-e^{-2\pi in\delta_k}/\rho & S_n^-e^{2\pi in\delta_k}\rho \smallskip \\ -(hS_{n-1}^*\map)^-e^{-2\pi in\delta_k}/\rho & (S_{n-1}/\map)^-e^{2\pi in\delta_k}\rho \end{array}\right) \nonumber \\
{} &=& \mathcal{N}_-\left(\begin{array}{cc} 0 & e^{2\pi in\delta_k}\rho \smallskip\\ -e^{-2\pi in\delta_k}/\rho & 0 \end{array}\right) \nonumber
\end{eqnarray}
on $\Delta_k$, again by \eqref{jumponab}, \eqref{jumpSn}, \eqref{Da}, and since $h^-=-h^+$ there. Thus, $\mathcal{N}$ as defined in \eqref{eq:n} does solve \rhn. Equations \eqref{n-ends} readily follow from \eqref{SnE}. Finally, as the determinants of the jump matrices in \rhn(b) are equal to 1, $\det(\mathcal{N})$ is a holomorphic function in $\overline\C\setminus E$. However, it follows from \eqref{n-ends} that
\[
\det(\mathcal{N})(z)\leq\const|z-e|^{-1/2} \quad \mbox{as} \quad z\to e\in E.
\]
Thus, $\det(\mathcal{N})$ is a function holomorphic in the entire extended complex plane and therefore is a constant. From the normalization at infinity, we get that $\det(\mathcal{N})\equiv1$.
\end{proof}

\subsection{Local Analysis Near Univalent Ends}
\label{ss:uni}

In the previous section we described the main term of the asymptotics of $\mathcal{S}$ away from $\Delta$. In this section we shall do the same near the points in $A$. Recall that there exists exactly one $k=k(a)$ such that the arc $\Delta_k$ is incident with $a$. Until the end of this section, we understand that $k$ is this fixed integer. Moreover, we let $J_a$ to be the possibly empty index set such that $\Delta_j^a$ has $a$ as its endpoint for each $j\in J_a$.

\subsubsection{Riemann-Hilbert Problem for Local Parametrix}
Let $\delta>0$ be small enough so that the intersection of the ball of radius $\delta$ centered at $a$, say $U_{a,\delta}$, with each of the arcs comprising $\Sigma$ and incident with $a$ is again a Jordan arc. We are seeking the solution of the following \rhpa:
\begin{itemize}
\item[(a)] $\mathcal{P}_a$ is analytic in $U_{a,\delta}\setminus\Sigma$;
\item[(b)] $\mathcal{P}_a$ has continuous traces on each side of $U_{a,\delta}\cap\Sigma$ that satisfy \rhs(b1)--(b3);
\item[(c)] $\mathcal{P}_a$ has the behavior near $a$ within $U_{a,\delta}$ described by \rhs(c);
\item[(d)] $\mathcal{P}_a\mathcal{N}^{-1}=\mathcal{I}+\mathcal{O}(1/n)$ uniformly on $\partial U_{a,\delta}\setminus\Sigma$, where $\mathcal{N}$ is given by \eqref{eq:n}.
\end{itemize}
We solve \rhpa~ only for $n\in\N_\varepsilon$. For these indices the above problem is well-posed as $\det(\mathcal{N})\equiv1$ by Lemma~\ref{lem:rhn} and therefore $\mathcal{N}^{-1}$ is an analytic matrix function in $D_a$. In fact, the solution does not depend on the actual value of $\varepsilon$, however, the term $\mathcal{O}(1/n)$ in \rhpa(d) does depend on $\varepsilon$ as well as $\delta$. That is, this estimate is uniform with $n$ for each fixed $\varepsilon$ and $\delta$, but is not uniform with respect to $\varepsilon$ or $\delta$ approaching zero.

To describe the solution of \rhpa, we need to define three special objects. The first one is the so-called $G$-function whose square conformally maps $U_{a,\delta}$ into some neighborhood of the origin in such a fashion that $\Delta_k$ is mapped into negative reals. The second one is a holomorphic matrix function needed to satisfy \rhpa(d). The third is a holomorphic matrix function that solves auxiliary Riemann-Hilbert problem with constant jumps.

\subsubsection{$G$-Function}
\label{sss:gfun}
Set
\[
G_a(z) := \int_a^zh(t)dt, \quad z\in U_{a,\delta}\setminus\Delta_k.
\]
Then $G_a$ is a holomorphic function in $U_{a,\delta}\setminus\Delta_k$ such that
\begin{equation}
\label{GeMap}
\left|\map e^{-G_a}\right| \equiv1 \quad \mbox{in} \quad U_{a,\delta}.
\end{equation}
Indeed, since both $a_1$ and $a$ belong to $\Delta$ and the Green differential $dG$ has purely imaginary periods, the integral $\int_{a_1}^adG$ is purely imaginary itself. It is also true that
\begin{equation}
\label{Gepm}
G_a^++G_a^- \equiv 0 \quad \mbox{on} \quad \Delta_k\cap U_{a,\delta}
\end{equation}
since $h^++h^-\equiv0$ on $\Delta$. Moreover, it holds that the traces $G_a^\pm$ have purely imaginary values on $\Delta_k$ as the same is true for $h^\pm(t)dt$ (recall that the quadratic differential $h^2(z)dz^2$ is negative on $\Delta_k$). The last observation and \eqref{Gepm} imply that $G_a^2$ is a holomorphic function in $U_{a,\delta}$ that assumes negative values on $\Delta_k\cap U_{a,\delta}$. Furthermore,
\begin{equation}
\label{Geat0}
|(G_a^2)^\prime(a)| = 2 \lim_{t\to a}|h(t)||t-a|^{1/2}  \neq 0.
\end{equation}
Property \eqref{Geat0} implies that $G_a^2$ is univalent in some neighborhood of $a$. Without loss of generality, we can assume that $\delta$ is small enough for $G_a^2$ to be univalent in $U_{a,\delta}$. Hence, $G_a^2$ maps $U_{a,\delta}$ conformally onto some neighborhood of the origin. In particular, this means that $\Delta_k$ can be extended as an analytic arc beyond $a$ by the preimage of $[0,\infty)$ under $G_a^2$ and we denote by $\widetilde\Delta_k$ this extension. 

Let $I_+ := \{z:~\Arg(z)=2\pi/3\}$, $I:= \{z:~\Arg(z)=\pi\}$, and $I_- := \{z:~\Arg(z)=-2\pi/3\}$ be three semi-infinite rays oriented towards the origin. Since we had some freedom in choosing the arcs $\Delta_{k\pm}$, we require that
\[
G_a^2\left((\Delta_{k+}\cup\Delta_{k-})\cap U_{a,\delta}\right)\subset I_+\cup I_-.
\]
The latter is possible as $G_a^2$ is conformal around $a$. We denote by $U_{a,\delta}^+$ (resp. $U_{a,\delta}^-$) the open subset of $U_{a,\delta}$ that is mapped by $G_a^2$ into the upper (resp. lower) half-plane. Clearly, there are two possibilities, either $\Delta_{k+}\subset U_{a,\delta}^+$ and therefore $\Delta_k$ is oriented towards $a$, or $\Delta_{k+}\subset U_{a,\delta}^-$ and respectively $\Delta_k$ is oriented away from $a$ (see Figure~\ref{fig:3}).
\begin{figure}[!ht]
\centering
\includegraphics[scale=.5]{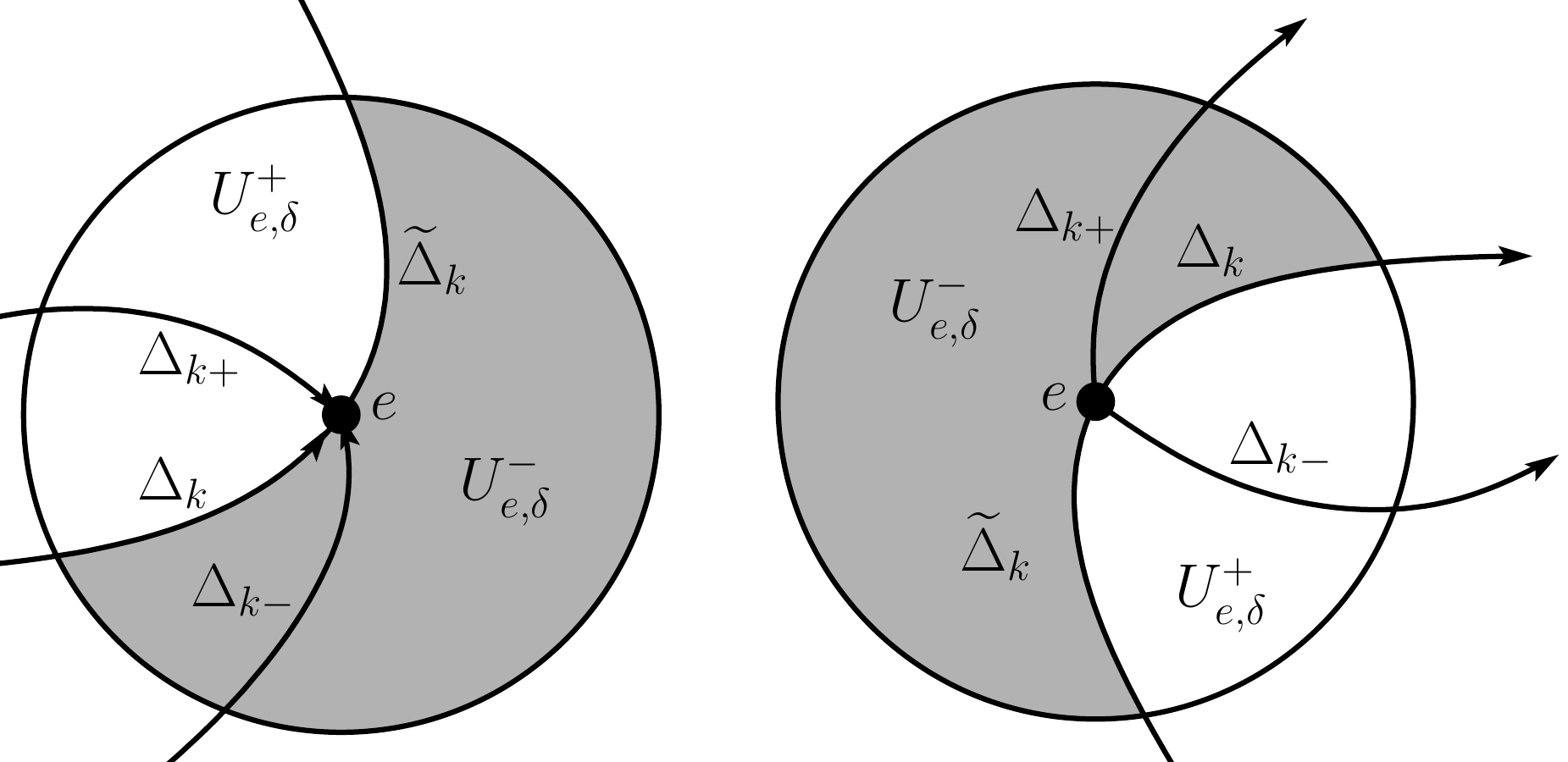}
\caption{\small Schematic representation of the arcs $\Delta_k$, $\widetilde\Delta_k$, $\Delta_{k\pm}$, the domains $U_{a,\delta}^+$ (shaded part of the disk) and $U_{a,\delta}^-$ (unshaded part of the disk), and two possible cases: $\Delta_{k\pm}\subset U_{a,\delta}^\pm$ ($\Delta_k$ oriented towards $a$) and $\Delta_{k\pm}\subset U_{a,\delta}^\mp$ ($\Delta_k$ oriented away from $a$).}
\label{fig:3}
\end{figure}

Finally, since the traces $G_a^\pm$ are purely imaginary on $\Delta_k\cap U_{a,\delta}$, satisfy \eqref{Gepm} there, and the increment of the argument of $G_a$ is $\pi$ when $a$ is encircled in the clockwise direction from a point on $\Delta_k\cap U_{a,\delta}$ back to itself, we can define the square root of $G_a$ that satisfies
\begin{equation}
\label{secondroot}
\left(G_a^{1/2}\right)^+ =\pm i\left(G_a^{1/2}\right)^- \quad \mbox{on} \quad \Delta_k\cap U_{a,\delta},
\end{equation}
where the sign $+$ must be used when $\Delta_k$ is oriented towards $a$ and the sign $-$ otherwise.

\subsubsection{Matrix Function $\mathcal{E}$}

Let $\arg(z-a)$ be the branch of the argument of $(z-a)$ that was used in the definition of $\rho$ in \eqref{rho}.  Without loss of generality we assume that its branch cut is $\widetilde\Delta_k$.  Put
\begin{equation}
\label{matrixW}
\mathcal{W} := \left(\map^nw_k^{1/2}(a-\cdot)^{\alpha_a/2}\right)^{\sigma_3} \quad \mbox{in} \quad U_{a,\delta}\setminus\Delta_k,
\end{equation}
where we take the principal value of the square root of $w_k$ (we assume that $\delta$ is small enough so $w_k$ is holomorphic and non-vanishing in $U_{a,\delta}$) and use the branch \eqref{argez} to define $(a-\cdot)^{\alpha_a/2}$. Then it holds that
\begin{equation}
\label{we2}
\left(w_k^{1/2}(a-\cdot)^{\alpha_a/2}\right)^2 = e^{\pm\alpha_a\pi i}\rho \quad \mbox{in} \quad U_{a,\delta}^\pm
\end{equation}
and
\begin{equation}
\label{webvp}
\left(w_k^{1/2}(a-\cdot)^{\alpha_a/2}\right)^+\left(w_k^{1/2}(a-\cdot)^{\alpha_a/2}\right)^- = \rho \quad \mbox{on} \quad \Delta_k\cap U_{a,\delta}.
\end{equation}
So the matrix function $\mathcal{NW}$ is holomorphic in $U_{a,\delta}\setminus \left(\Delta_k\cup\bigcup_{J_a}\Delta_j^a\right)$ and
\[
\mathcal{N}_+\mathcal{W}_+ = \mathcal{N}_-\mathcal{W}_- \left(\begin{array}{rr} 0 & 1 \\ -1 & 0 \end{array}\right) \quad \mbox{on} \quad  \Delta_k\cap U_{a,\delta}.
\]
Moreover, it is, in fact, holomorphic across each $\Delta_j^a$, $j\in J_a$, as
\[
\mathcal{N}_+\mathcal{W}_+ = \mathcal{N}_-e^{-2\pi in\omega_j\sigma_3}\mathcal{W}_-e^{2\pi in\omega_j\sigma_3} = \mathcal{N}_-\mathcal{W}_-
\]
by \eqref{t2} and since $\mathcal{W}$ is diagonal. Hence, we deduce from \eqref{Gepm} that $\mathcal{NW}\exp\{-nG_a\sigma_3\}$ is holomorphic in $U_{a,\delta}\setminus\Delta_k$ and has the same jump across $\Delta_k$ as $\mathcal{NW}$. Define
\[
\mathcal{E} := \mathcal{NW}\exp\big\{-nG_a\sigma_3\big\}\frac{1}{\sqrt2} \left(\begin{array}{cc} 1 & \mp i \\ \mp i & 1 \end{array}\right) \left(\pi nG_a\right)^{\sigma_3/2}
\]
where the sign $-$ must be used when $\Delta_k$ is oriented towards $a$ and the sign $+$ otherwise. Since the product
\[
\left(\left(G_a^{1/2}\right)^-\right)^{-\sigma_3} \left(\begin{array}{rr} 1/2 & \pm i/2 \\ \pm i/2 & 1/2 \end{array}\right) \left(\begin{array}{rr} 0 & 1 \\ -1 & 0 \end{array}\right) \left(\begin{array}{rr} 1 & \mp i \\ \mp i & 1 \end{array}\right) \left(\left(G_a^{1/2}\right)^+\right)^{\sigma_3}
\]
is equal to $\mathcal{I}$ by \eqref{secondroot}, the matrix function $\mathcal{E}$ is holomorphic in $U_{a,\delta}\setminus\{a\}$.  Now, the second part of \eqref{n-ends} and \eqref{matrixW} yield that all the entries of $\mathcal{NW}$ behave like $\mathcal{O}\big(|z-a|^{-1/4}\big)$ as $z\to a$. Hence, it follows from \eqref{Geat0} that the entries of $\mathcal{E}$ can have at most square-root singularity there, which is possible only if $\mathcal{E}$ is analytic in the whole disk $U_{a,\delta}$.

\subsubsection{Matrix Functions $\Psi$ and $\widetilde\Psi$}
The following construction was introduced in \cite[Theorem~6.3]{KMcLVAV04}. Let $I_\alpha$ and $K_\alpha$ be the modified Bessel functions and $H_\alpha^{(1)}$ and $H_\alpha^{(2)}$ be the Hankel functions \cite[Ch. 9]{AbramowitzStegun}. Set $\Psi$ to be the following sectionally holomorphic matrix function:
\[
\Psi(\zeta) = \Psi(\zeta;\alpha) := \left(
\begin{array}{cc}
I_\alpha\left(2\zeta^{1/2}\right) & \frac i\pi K_\alpha\left(2\zeta^{1/2}\right) \smallskip \\
2\pi i\zeta^{1/2} I_\alpha^\prime\left(2\zeta^{1/2}\right) & -2\zeta^{1/2} K_\alpha^\prime\left(2\zeta^{1/2}\right)
\end{array}
\right)
\]
for $|\Arg(\zeta)|<2\pi/3$;
\[
\Psi(\zeta) := \left(\begin{array}{cc}
\frac12 H_\alpha^{(1)}\left(2(-\zeta)^{1/2}\right) & \frac12 H_\alpha^{(2)}\left(2(-\zeta)^{1/2}\right) \smallskip \\
\pi\zeta^{1/2}\left(H_\alpha^{(1)}\right)^\prime\left(2(-\zeta)^{1/2}\right) & \pi\zeta^{1/2}\left( H_\alpha^{(2)}\right)^\prime\left(2(-\zeta)^{1/2}\right)
\end{array}
\right)e^{\frac12 \alpha\pi i\sigma_3}
\]
for $2\pi/3<\Arg(\zeta)<\pi$;
\[
\Psi(\zeta) := \left(\begin{array}{cc}
\frac12 H_\alpha^{(2)}\left(2(-\zeta)^{1/2}\right) & -\frac12 H_\alpha^{(1)}\left(2(-\zeta)^{1/2}\right) \smallskip \\
-\pi\zeta^{1/2}\left(H_\alpha^{(2)}\right)^\prime\left(2(-\zeta)^{1/2}\right) & \pi\zeta^{1/2}\left( H_\alpha^{(1)}\right)^\prime\left(2(-\zeta)^{1/2}\right)
\end{array}
\right)e^{-\frac12 \alpha\pi i\sigma_3}
\]
for $-\pi<\Arg(\zeta)<-2\pi/3$, where $\Arg(\zeta)\in(-\pi,\pi]$ is the principal determination of the argument of $\zeta$. Assume that the rays $I$, $I_+$, and $I_-$ defined in Section~\ref{sss:gfun} are oriented towards the origin. Using known properties of $I_\alpha$, $K_\alpha$, $H_\alpha^{(1)}$, $H_\alpha^{(2)}$, and their derivatives, it can be checked that $\Psi$ is the solution of the following Riemann-Hilbert problem \rhpsi:
\begin{itemize}
\item[(a)] $\Psi$ is a holomorphic matrix function in $\C\setminus(I\cup I_+\cup I_-)$;
\item[(b)] $\Psi$ has continuous traces on $I_+\cup I_-\cup I$ that satisfy
\[
\Psi_+ = \Psi_- \left\{
\begin{array}{ll}
\left(\begin{array}{cc} 1 & 0 \\ e^{\pm\alpha\pi i} & 1 \end{array}\right) & \mbox{on} \quad I_\pm \smallskip \\
\left(\begin{array}{cc} 0 & 1 \\ -1 & 0 \end{array}\right) & \mbox{on} \quad  I;
\end{array}
\right.
\]
\item[(c)] $\Psi$ has the following behavior near $0$:
\[
\left\{
\begin{array}{lll}
\displaystyle \mathcal{O}\left(\begin{array}{cc} |\zeta|^{\alpha/2} & |\zeta|^{\alpha/2} \\ |\zeta|^{\alpha/2} & |\zeta|^{\alpha/2} \end{array}\right)  & \mbox{if} \quad \alpha<0, & \mbox{as} \quad \zeta\to0, \smallskip \\
\displaystyle \mathcal{O}\left(\begin{array}{cc} \log|\zeta| & \log|\zeta| \\ \log|\zeta| & \log|\zeta| \end{array}\right) & \mbox{if} \quad \alpha=0, & \mbox{as} \quad \zeta\to0, \smallskip \\
\displaystyle \mathcal{O}\left(\begin{array}{cc} |\zeta|^{\alpha/2} & |\zeta|^{-\alpha/2} \\ |\zeta|^{\alpha/2} & |\zeta|^{-\alpha/2} \end{array}\right) & \mbox{if} \quad \alpha>0, & \mbox{as} \quad \zeta\to0 \quad \mbox{in} \quad |\Arg(\zeta)|<2\pi/3, \smallskip \\
\displaystyle \mathcal{O}\left(\begin{array}{cc} |\zeta|^{-\alpha/2} & |\zeta|^{-\alpha/2} \\ |\zeta|^{-\alpha/2} & |\zeta|^{-\alpha/2} \end{array}\right) & \mbox{if} \quad \alpha>0, & \mbox{as} \quad \zeta\to0 \quad \mbox{in} \quad 2\pi/3<|\Arg(\zeta)|<\pi.
\end{array}
\right.
\]
\item[(d)] $\Psi$ has the following behavior near $\infty$:
\[
\Psi(\zeta) = \left(2\pi\zeta^{1/2}\right)^{-\sigma_3/2} \frac{1}{\sqrt2}\left(
\begin{array}{cc}
1 & i \smallskip \\
i & 1
\end{array}
\right)
\left(\mathcal{I} + \mathcal{O}\left(\zeta^{-1/2}\right)\right)\exp\left\{2\zeta^{1/2}\sigma_3\right\}
\]
uniformly in $\C\setminus(I\cup I_+\cup I_-)$.
\end{itemize}

Finally, if we set $\widetilde \Psi := \sigma_3\Psi\sigma_3$. It can be readily checked that this matrix function satisfies \rhpsi~ with the orientations of the rays $I$, $I_+$, and $I_-$ reversed.

\subsubsection{Solution of \rhpa.}
With the notation introduced above, the following lemma holds.
\begin{lemma}
\label{lem:rhp}
For $n\in\N_\varepsilon$, a solution of \rhpa~ is given by
\begin{equation}
\label{eq:pe}
\mathcal{P}_a = \mathcal{E}\Psi\mathcal{W}^{-1}, \quad \zeta = (n/2)^2G_a^2,
\end{equation}
if $\Delta_k$ is oriented towards $a$ and with $\Psi$ replaced by $\widetilde\Psi$ otherwise, where $\Psi=\Psi(\cdot;\alpha_a)$.
\end{lemma}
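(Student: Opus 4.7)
The plan is to verify conditions (a)--(d) of \rhpa\ directly for the candidate $\mathcal{P}_a = \mathcal{E}\Psi\mathcal{W}^{-1}$. By Section~\ref{sss:gfun}, the map $\zeta = (n/2)^2G_a^2$ is conformal on $U_{a,\delta}$, sending $a\mapsto 0$, $\Delta_k\cap U_{a,\delta}$ onto an initial segment of $(-\infty,0]$, and $\Delta_{k\pm}\cap U_{a,\delta}$ onto initial segments of $I_\pm$. The orientations agree with those of $I, I_\pm$ (all toward $0$) precisely when $\Delta_k$ is oriented toward $a$; in the opposite case every orientation reverses, which is exactly the situation accommodated by $\widetilde\Psi = \sigma_3\Psi\sigma_3$. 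Condition~(a) is then immediate: $\mathcal{E}$ is analytic in $U_{a,\delta}$, $\Psi\circ\zeta$ is analytic off $\Delta_k\cup\Delta_{k+}\cup\Delta_{k-}$, and $\mathcal{W}^{-1}$ is analytic off $\Delta_k\cup\bigcup_{j\in J_a}\Delta_j^a$.

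For (b) one checks each arc in turn. On $\Delta_k\cap U_{a,\delta}$, since $\mathcal{E}$ has no jump, $\mathcal{P}_{a,+}=\mathcal{P}_{a,-}\mathcal{W}_-\bigl(\begin{smallmatrix}0 & 1\\ -1 & 0\end{smallmatrix}\bigr)\mathcal{W}_+^{-1}$; writing $\mathcal{W}=u^{\sigma_3}$ with $u=\map^n w_k^{1/2}(a-\cdot)^{\alpha_a/2}$ reduces this to $\bigl(\begin{smallmatrix}0 & u_-u_+ \\ -(u_-u_+)^{-1} & 0\end{smallmatrix}\bigr)$, and \eqref{jumponab} combined with \eqref{webvp} yields $u_-u_+ = e^{2\pi in\delta_k}\rho$, reproducing \rhs(b2). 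On $\Delta_{k\pm}$ the matrix $\mathcal{W}$ is holomorphic, and conjugating the jump of $\Psi$ on $I_\pm$ by $\mathcal{W}$ produces a lower-triangular matrix with off-diagonal entry $e^{\pm\alpha_a\pi i}u^{-2}$, which equals $\map^{-2n}/\rho$ by \eqref{we2}, matching \rhs(b3). On each $\Delta_j^a\cap U_{a,\delta}$, $j\in J_a$, both $\mathcal{E}$ and $\Psi\circ\zeta$ are holomorphic, and $\mathcal{W}_+=\mathcal{W}_-e^{2\pi in\omega_j\sigma_3}$ delivers $\mathcal{P}_{a,+} = \mathcal{P}_{a,-}e^{-2\pi in\omega_j\sigma_3}$, in agreement with \rhs(b1). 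Condition~(c) then follows by combining the local behavior of $\Psi$ at $0$ from \rhpsi(c) with the scaling $|\zeta|\asymp|z-a|$ and the factor $\mathcal{W}^{-1}$, whose diagonal entries are $|z-a|^{\mp\alpha_a/2}$ times non-vanishing holomorphic functions: this recovers the correct singularity type in each of the cases $\alpha_a<0,=0,>0$, with the inside/outside-of-lens distinction arising from the different sectors of $\Psi$.

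The matching condition~(d) is the main computation. Substituting \rhpsi(d) into $\mathcal{P}_a\mathcal{N}^{-1}$, the identity $\zeta^{1/2}=(n/2)G_a$ gives $(\pi nG_a)^{\sigma_3/2}(2\pi\zeta^{1/2})^{-\sigma_3/2}=\mathcal{I}$; the two Fourier-type matrices multiply to $\mathcal{I}$ via $\tfrac{1}{\sqrt2}\bigl(\begin{smallmatrix}1 & \mp i\\ \mp i & 1\end{smallmatrix}\bigr)\tfrac{1}{\sqrt2}\bigl(\begin{smallmatrix}1 & i\\ i & 1\end{smallmatrix}\bigr)=\mathcal{I}$ with the sign dictated by the orientation of $\Delta_k$; and $e^{2\zeta^{1/2}\sigma_3}=e^{nG_a\sigma_3}$ cancels the factor $e^{-nG_a\sigma_3}$ built into $\mathcal{E}$. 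This yields
\[
\mathcal{P}_a\mathcal{N}^{-1} \;=\; \mathcal{N}\mathcal{W}\bigl(\mathcal{I}+\mathcal{O}(\zeta^{-1/2})\bigr)\mathcal{W}^{-1}\mathcal{N}^{-1}
\]
on $\partial U_{a,\delta}$. The main obstacle is arguing that the subleading Bessel-type correction $\mathcal{I}+\mathcal{O}(\zeta^{-1/2})$ is diagonal at each order, so that it commutes with the diagonal $\mathcal{W}$ and cannot generate exponentially growing off-diagonal entries after conjugation by $e^{\pm nG_a\sigma_3}$; this is a well-known structural feature of the asymptotic expansions of $I_\alpha, K_\alpha, H_\alpha^{(1)}, H_\alpha^{(2)}$, and is precisely why $\Psi$ is assembled from this particular combination. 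Since $G_a(a)=0$ with $(G_a^2)'(a)\neq 0$, we have $|G_a|\geq c(\delta)>0$ on $\partial U_{a,\delta}$ and hence $|\zeta^{-1/2}|\leq c/n$; combining this with the boundedness of $\mathcal{N}^{\pm 1}$ on $\partial U_{a,\delta}$ for $n\in\N_\varepsilon$ (which follows from Lemma~\ref{lem:rhn}, $\det\mathcal{N}\equiv 1$, and \eqref{SnNormalized1}), we conclude $\mathcal{P}_a\mathcal{N}^{-1}=\mathcal{I}+\mathcal{O}(1/n)$, completing the verification.
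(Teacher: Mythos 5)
Your verifications of \rhpa(a)--(c), and the way you reduce the jumps on $\Delta_k$, $\Delta_{k\pm}$ and $\Delta_j^a$ to \eqref{jumponab}, \eqref{we2} and \eqref{webvp}, follow the same route as the paper and are fine. The genuine gap is in the matching condition (d). You dispose of the conjugation by $e^{\pm nG_a\sigma_3}$ by asserting that the $\mathcal{O}(\zeta^{-1/2})$ correction in \rhpsi(d) is \emph{diagonal at each order}. That is false: the large-$\zeta$ expansion of the Bessel parametrix has full-matrix corrections (the $k$-th term has off-diagonal entries proportional to $(k-\tfrac12)i$; see the expansion in \cite{KMcLVAV04} on which \rhpsi(d) is based), so the error does not commute with $e^{nG_a\sigma_3}$ or with $\mathcal{W}$. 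Consequently your displayed identity $\mathcal{P}_a\mathcal{N}^{-1}=\mathcal{NW}\bigl(\mathcal{I}+\mathcal{O}(\zeta^{-1/2})\bigr)\mathcal{W}^{-1}\mathcal{N}^{-1}$ silently drops the exponentials; the exact relation is $\mathcal{P}_a\mathcal{N}^{-1}-\mathcal{I}=\mathcal{NW}e^{-nG_a\sigma_3}\,\mathcal{O}(1/n)\,e^{nG_a\sigma_3}\mathcal{W}^{-1}\mathcal{N}^{-1}$ on $\partial U_{a,\delta}$. This matters because $\re G_a=g_\Delta>0$ off $\Delta$, so $e^{2nG_a}$ really does grow exponentially on $\partial U_{a,\delta}\setminus\Delta$, and with a non-diagonal error the naive estimate breaks down.

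The correct mechanism, which is what the paper uses, is not diagonality but \eqref{GeMap}: since $\bigl|\map\, e^{-G_a}\bigr|\equiv1$ in $U_{a,\delta}$, the diagonal conjugating factor $\mathcal{W}e^{-nG_a\sigma_3}=\bigl(\map^n e^{-nG_a}w_k^{1/2}(a-\cdot)^{\alpha_a/2}\bigr)^{\sigma_3}$ has entries whose moduli are bounded above and below on $\partial U_{a,\delta}$ \emph{independently of} $n$ (the exponential growth of $\map^{\pm 2n}$ coming from $\mathcal{W}^{\pm1}$ exactly offsets that of $e^{\mp 2nG_a}$). Hence conjugating the full, non-diagonal $\mathcal{O}(1/n)$ error by $\mathcal{W}e^{-nG_a\sigma_3}$ keeps it $\mathcal{O}(1/n)$, leaving $\mathcal{N}\,\mathcal{O}(1/n)\,\mathcal{N}^{-1}$; your final step — bounding the entries of $\mathcal{N}$ and $\mathcal{N}^{-1}$ uniformly for $n\in\N_\varepsilon$ via $\det(\mathcal{N})\equiv1$ and \eqref{SnNormalized1} with $\epsilon<\delta$ — then goes through exactly as you wrote and agrees with the paper. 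Replacing the diagonality claim by this appeal to \eqref{GeMap} repairs the argument.
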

\begin{proof}
Assume that $\Delta_k$, and respectively $\Delta_{k\pm}$, is oriented towards $a$. In this case $G_a^2$ preserves the orientation of these arcs and we use \eqref{eq:pe} with $\Psi$. The analyticity of $\mathcal{E}$ implies that the jumps of $\mathcal{P}_a$ are those of $\Psi\mathcal{W}^{-1}$. By the very definition of $G_a^2$ and $\Psi$, the latter has jumps only on $\Sigma\cap U_{a,\delta}$ and otherwise is holomorphic. This shows the validity of \rhpa(a). It also can be readily verified that \rhpa(b) is fulfilled by using \eqref{t2}, \eqref{we2}, and \eqref{webvp}. Next, observe that \rhpa(c) follows from \rhpsi(c) upon recalling that $|G_a^2(z)|\sim|z-a|$ and $|\mathcal{W}(z)|\sim|z-a|^{(\alpha_a/2)\sigma_3}$ as $z\to a$. Observe now that with $\zeta$ defined as in \eqref{eq:pe}, it holds by the definition of $\mathcal{E}$ and \rhpsi(d) that
\[
\mathcal{P}_a\mathcal{N}^{-1}-\mathcal{I} = \mathcal{NW}\exp\left\{-nG_a\sigma_3\right\}\mathcal{O}\left(\frac1n\right)\exp\left\{nG_a\sigma_3\right\}\mathcal{W}^{-1}\mathcal{N}^{-1} = \mathcal{N}\mathcal{O}\left(\frac1n\right)\mathcal{N}^{-1}
\]
on $\partial U_{a,\delta}$, where we also used \eqref{GeMap}. Multiplying the last three matrices out we get that the entires of thus obtained matrix contain all possible products of $S_n/S_n(\infty)$, $hS_n^*/S_n(\infty)$, $S_{n-1}/S_{n-1}^*(\infty)$, and $hS_{n-1}^*/S_{n-1}^*(\infty)$. Then it follows from \eqref{SnNormalized1} used with $\epsilon<\delta$ that the moduli of the entires of $\mathcal{P}_a\mathcal{N}^{-1}-\mathcal{I}$ are of order $\mathcal{O}\left(1/n\right)$ uniformly for $n\in\N_\varepsilon$. This finishes the proof of the lemma since the case where $\Delta_k$ is oriented away from $a$ can be examined analogously.
\end{proof}

\subsection{Local Analysis Near Trivalent Ends}

In this section we continue to investigate the behavior of $\mathcal{S}$ near the points in $E$. However, now we concentrate on the zeros of $h$, that is, the trivalent ends of $\Delta$. As in the construction of the lens $\Sigma$, let $\Delta_{b,k}$ be the arcs comprising $\Delta$ incident with $b$ which are numbered in the counter-clockwise fashion.

\subsubsection{Riemann-Hilbert Problem for Local Parametrix}
As before, we denote by $U_{b,\delta}$ a disk centered at $b$ of small enough radius $\delta$ (``small enough'' is specified as we proceed with the solution of \rhpb). We are seeking the solution of the following \rhpb:
\begin{itemize}
\item[(a)] $\mathcal{P}_b$ is analytic in $U_{b,\delta}\setminus\Sigma$;
\item[(b)] $\mathcal{P}_b$ has continuous traces on each side of $U_{b,\delta}\cap\Sigma$ that satisfy \rhs(b2) and (b4);
\item[(c)] $\mathcal{P}_b$ is bounded in the vicinity of $b$;
\item[(d)] $\mathcal{P}_b\mathcal{N}^{-1}=\mathcal{I}+\mathcal{O}(1/n)$ uniformly on $\partial U_{b,\delta}\setminus\Sigma$, where $\mathcal{N}$ is given by \eqref{eq:n}.
\end{itemize}
As in the case of \rhpa, we consider only the indices $n\in\N_\varepsilon$ and the estimate $\mathcal{O}(1/n)$ in \rhpb(d) is not uniform with respect to $\varepsilon$ or $\delta$ approaching zero.

\subsubsection{$G$-Function}
Set, for convenience, $S_{b,k}$ to be the sectorial subset of $U_{b,\delta}$ bounded by $\Delta_{b,k+1}$, $\Delta_{b,k-1}$, and $\partial U_{b,\delta}$. Define
\[
G_b(z) := (-1)^k\int_b^zh(t)dt, \quad z\in S_{b,k}.
\]
Thus defined, the function $G_b$ satisfies
\begin{equation}
\label{gemapsectors}
\left|\map b^{G_b}\right| \equiv 1 \quad \mbox{in} \quad S_{b,1}\cup S_{b,3} \quad \mbox{and} \quad \left|\map b^{-G_b}\right| \equiv 1 \quad \mbox{in} \quad S_{b,2}.
\end{equation}
The same reasoning as in \eqref{Gepm} yields that $G_b$ is a holomorphic function in $U_{b,\delta}\setminus\Delta_{b,2}$ whose traces on $\Delta_{b,1}\cap U_{b,\delta}$ and $\Delta_{b,3}\cap U_{b,\delta}$ as well as on both side of $\Delta_{b,2}\cap U_{b,\delta}$ are purely imaginary and $G_b^++G_b^-\equiv0$ on $\Delta_{b,2}\cap U_\delta^b$.  The last observation implies that $G_b^2$ is a holomorphic function in $U_{b,\delta}$ for $\delta$ small enough that assumes negative values on each $\Delta_{b,k}\cap U_{b,\delta}$.  

Recall that $h^2$ has a simple zero at $b$ and therefore $|h(z)|\sim|z-b|^{1/2}$ as $z\to b$. This, in turn, implies we can holomorphically define a cubic root of $G_b^2$ in $U_{b,\delta}$. In what follows, we set $G_b^{2/3}$ to be a conformal map of $U_{b,\delta}$ onto some neighborhood of the origin such that
\begin{equation}
\label{ik}
G_b^{2/3}\left(\Delta_{b,k}\cap U_{b,\delta}\right)\subset I_k,
\end{equation}
where we set $I_k:=\left\{z:\arg(z)=\pi (2k-1)/3\right\}$, $k\in\{1,2,3\}$, and these rays are oriented towards the origin. Moreover, since we had some freedom in choosing the arcs $\widetilde\Delta_{b,k}$, we require that
\begin{equation}
\label{tildeik}
G_b^{2/3}\left(\widetilde\Delta_{b,k}\cap U_{b,\delta}\right)\subset \widetilde I_k,
\end{equation}
where $\widetilde I_k:=\left\{z:\Arg(z)=2\pi(k-2)/3\right\}$, $k\in\{1,2,3\}$, and the rays are once again oriented towards the origin. Such a choice is always possible as $G_b^2$ maps the sector $S_{b,k}\supset\widetilde\Delta_{b,k}$ onto a neighborhood of the origin cut along $I_2$.

Finally, since the traces $G_b^\pm$ are purely imaginary on $\Delta_{b,2}\cap U_{b,\delta}$, satisfy $G_b^+=-G_b^-$ there, and the increment of the argument of $G_b$ is $3\pi$ when $b$ is encircled in the clockwise direction from a point on $\Delta_{b,2}\cap U_{b,\delta}$ back to itself, we can define the sixth root of $G_b$, which is equivalent to the fourth root of $G_b^{2/3}$, that satisfies
\begin{equation}
\label{sixthroot}
\left(G_b^{1/6}\right)^+ = \pm i\left(G_b^{1/6}\right)^- \quad \mbox{on} \quad \Delta_{b,2}\cap U_{b,\delta},
\end{equation}
where the $+$ sign must be used when $\Delta_{b,k}$ are oriented towards $b$ and the $-$ sign otherwise.

\subsubsection{Matrix Function $\mathcal{E}$} Recall our notation, $\rho_{b,k}=\rho_{|\Delta_{b,k}}$. For each function $\rho_{b,k}$, fix a continuous branch of the square root $\sqrt{\rho_{b,k}}$. Let $\mathcal{N}$ be the solution of \rhn~presented in Section~\ref{ss:6.1}. Set
\[
\mathcal{W} := \left\{
\begin{array}{ll}
\displaystyle \left(-i\map^n\frac{\sqrt{\rho_{b,3}}\sqrt{\rho_{b,2}}}{\sqrt{\rho_{b,1}}}\right)^{\sigma_3}, & \mbox{in} \quad S_{b,1}, \bigskip \\
\displaystyle \left(i\map^n\frac{\sqrt{\rho_{b,k-1}}\sqrt{\rho_{b,k+1}}}{\sqrt{\rho_{b,k}}}\right)^{\sigma_3}, & \mbox{in} \quad S_{b,2}\cup S_{b,3}.
\end{array}
\right.
\]
Then the matrix function $\mathcal{NW}$ is holomorphic in $U_{b,\delta}\setminus\cup_k\Delta_{b,k}$ and satisfies
\[
\mathcal{N_+}\mathcal{W_+} = \mathcal{N_-}\mathcal{W_-} \left\{
\begin{array}{ll}
\left(\begin{array}{rr} 0 & -1 \\ 1 & 0 \end{array}\right)  & \mbox{on} \quad \Delta_{b,1}\cap U_{b,\delta}, \bigskip \\
\left(\begin{array}{rr} 0 & 1 \\ -1 & 0 \end{array}\right)  & \mbox{on} \quad \left(\Delta_{b,2}\cup\Delta_{b,3}\right)\cap U_{b,\delta}.
\end{array}
\right.
\]
Further, put
\[
\mathcal{E^*} := \left\{
\begin{array}{ll}
\displaystyle \mathcal{NW}\exp\left\{nG_b\sigma_3\right\}, & \mbox{in} \quad S_{b,1}\cup S_{b,3}, \bigskip \\
\displaystyle \mathcal{NW}\left(\begin{array}{rr} 0 & \pm 1 \\ \mp 1 & 0 \end{array}\right)\exp\left\{nG_b\sigma_3\right\}, & \mbox{in} \quad S_{b,2},
\end{array}
\right.
\]
where we use the upper signs when the arcs $\Delta_k$ are oriented towards $b$ and the lower ones otherwise. Then $\mathcal{E^*}$ is a holomorphic matrix function in $U_{b,\delta}\setminus\Delta_{b,2}$ and
\[
\mathcal{E^*_+} = \mathcal{E^*_-}\left(\begin{array}{rr} 0 & 1 \\ -1 & 0 \end{array}\right) \quad \mbox{on} \quad \Delta_{b,2}\cap U_{b,\delta}
\]
since $G_b^+=-G_b^-$ there. Finally, define
\[
\mathcal{E} := \mathcal{E^*}\left(\begin{array}{rr} 1/2 & \mp 1/2 \\ \mp i/2 & -i/2 \end{array}\right)G_b^{\sigma_3/6}(3n/2)^{\sigma_3/6}
\]
where we use the $-$ sign when the arcs $\Delta_k$ are oriented towards $b$ and the $+$ sign otherwise. Since the product
\[
\left(\left(G_b^{1/6}\right)^-\right)^{-\sigma_3} \left(\begin{array}{rr} 1 & \pm i \\ \mp 1 & i \end{array}\right) \left(\begin{array}{rr} 0 & 1 \\ -1 & 0 \end{array}\right) \left(\begin{array}{rr} 1/2 & \mp1/2 \\ \mp i/2 & -i/2 \end{array}\right) \left(\left(G_b^{1/6}\right)^+\right)^{\sigma_3}
\]
is equal to $\mathcal{I}$ by \eqref{sixthroot} where we use the upper signs when the arcs $\Delta_k$ are oriented towards $b$ and the lower ones otherwise, the matrix function $\mathcal{E}$ is holomorphic in $U_{b,\delta}\setminus\{b\}$. Since $|G_b(z)|^{1/6}\sim|z-b|^{1/4}$ as $z\to b$ and by the first part of \eqref{n-ends}, the entries of $\mathcal{E}$ can have at most square-root singularity there. Therefore $\mathcal{E}$ is analytic in the whole disk $U_{b,\delta}$.

\subsubsection{Matrix Function $\Upsilon$}
The following construction is a modification of the one introduced in \cite[Section~7]{DKMLVZ99b}. Let $\Ai$ be the Airy function. Set
\[
\Upsilon_0(\zeta) := \left(
\begin{array}{cc}
\Ai(\zeta) & \Ai\left(e^{\frac{4\pi i}{3}}\zeta\right) \smallskip \\
\Ai^\prime(\zeta) & e^{\frac{4\pi i}{3}}\Ai^\prime\left(e^{\frac{4\pi i}{3}}\zeta\right)
\end{array}
\right)e^{-\frac{\pi i}{6}\sigma_3}
\]
and
\[
\Upsilon_1(\zeta) := \left(
\begin{array}{cc}
\Ai(\zeta) & -e^{\frac{4\pi i}{3}}\Ai\left(e^{\frac{2\pi i}{3}}\zeta\right) \smallskip \\
\Ai^\prime(\zeta) & -\Ai^\prime\left(e^{\frac{2\pi i}{3}}\zeta\right)\end{array}
\right)e^{-\frac{\pi i}{6}\sigma_3}.
\]
Further, put
\[
\Upsilon := \left\{
\begin{array}{ll}
\Upsilon_0\left(\begin{array}{rr}
0 & -1 \smallskip \\
1 & 0
\end{array}\right),  & \Arg(\zeta)\in\left(0,\frac\pi3\right), \smallskip \\
\Upsilon_0, & \Arg(\zeta)\in\left(\frac\pi3,\frac{2\pi}{3}\right), \smallskip \\
\Upsilon_0\left(\begin{array}{rr}
1 & 0 \smallskip \\
-1 & 1
\end{array}\right), & \Arg(\zeta)\in\left(\frac{2\pi}{3},\pi\right),
\end{array}\right. 
\;
:= \left\{
\begin{array}{ll}
\Upsilon_1\left(\begin{array}{rr}
0 & -1 \smallskip \\
1 & 0
\end{array}\right),  & \Arg(\zeta)\in\left(-\frac\pi3,0\right), \smallskip \\
\Upsilon_1, & \Arg(\zeta)\in\left(-\frac{2\pi}{3},-\frac\pi3\right), \smallskip \\
\Upsilon_1\left(\begin{array}{cc}
1 & 0 \smallskip \\
1 & 1
\end{array}\right), & \Arg(\zeta)\in\left(-\pi,-\frac{2\pi}{3}\right).
\end{array}\right. 
\]

It is known that
\[
\left\{
\begin{array}{lll}
\Ai(\zeta) &=& \frac{1}{2\sqrt\pi}\zeta^{-1/4}\exp\left\{-\frac23\zeta^{3/2}\right\}\left(1+\mathcal{O}(\zeta^{-3/2})\right) \bigskip \\
\Ai^\prime(\zeta) &=&-\frac{1}{2\sqrt\pi}\zeta^{1/4}\exp\left\{-\frac23\zeta^{3/2}\right\}\left(1+\mathcal{O}(\zeta^{-3/2})\right)
\end{array}
\right.
\]
as $\zeta\to\infty$ in the angle $|\Arg(\zeta)|<\pi$, from which it was deduced in \cite[Lemma 7.4]{DKMLVZ99b} that
\begin{equation}
\label{upsinfty}
\Upsilon(\zeta) = \frac{e^{-\frac{\pi i}{6}}}{2\sqrt\pi}\zeta^{-\sigma_3/4}\left(
\begin{array}{cc}
1  & i  \smallskip \\
-1 & i
\end{array}
\right)\left(
\mathcal{I} + \mathcal{O}\left(\zeta^{-3/2}\right)\right)\exp\left\{-\frac23\zeta^{3/2}\sigma_3\right\}
\end{equation}
as $\zeta\to\infty$ for $|\Arg(\zeta)|\in\left(\frac\pi3,\frac{2\pi}{3}\right)\cup\left(\frac{2\pi}{3},\pi\right)$. Asymptotics for $|\Arg(\zeta)|\in\left(0,\frac\pi3\right)$ can be obtained by multiplying the left-hand side of \eqref{upsinfty} by the matrix $\left(\begin{array}{rr}0 & -1 \smallskip \\1 & 0\end{array}\right)$ from the right.
\begin{figure}[!ht]
\centering
\includegraphics[scale=.5]{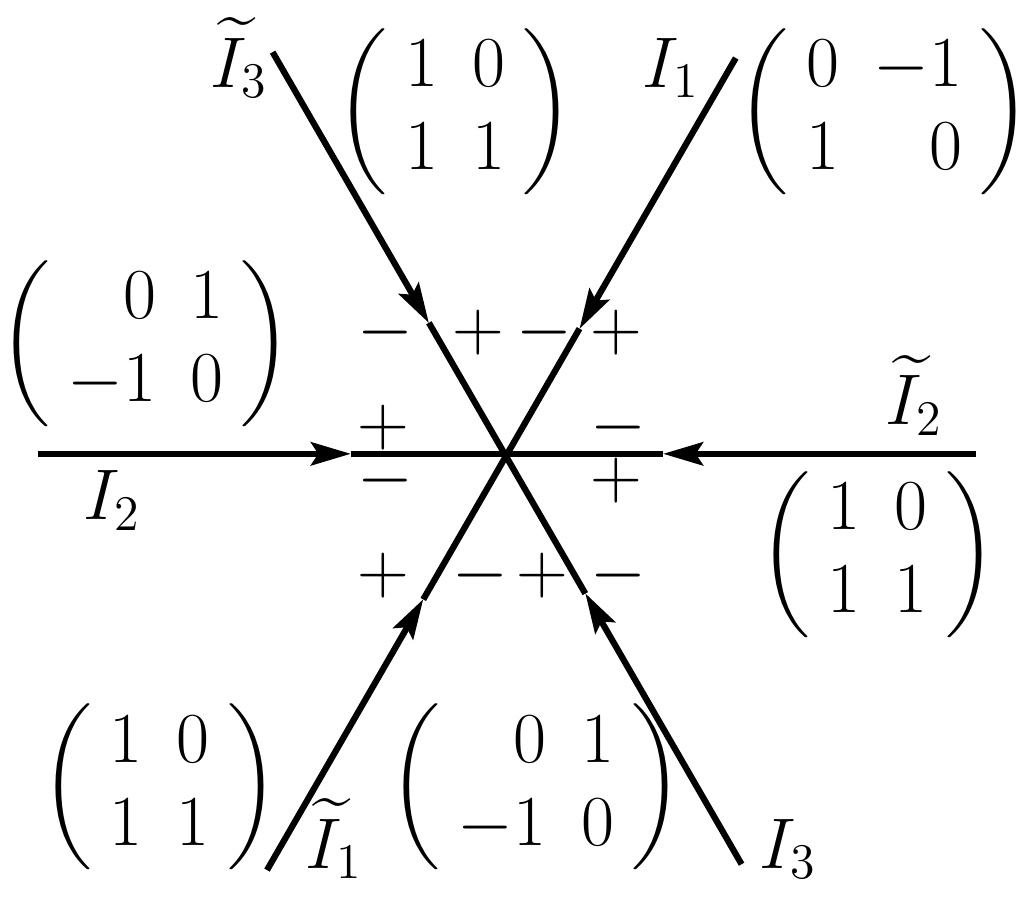}
\caption{\small The jump matrices that describe the relations between the traces of $\Upsilon$ on $\bigcup_k\left(I_k\cup \widetilde I_k\right)$.}
\label{fig:5}
\end{figure}
Altogether, it can be checked as in \cite[Section~7]{DKMLVZ99b} that $\Upsilon$ is the solution of the following Riemann-Hilbert problem \rhups:
\begin{itemize}
\item[(a)] $\Upsilon$ is a holomorphic matrix function in $\C\setminus\bigcup_k\left(I_k\cup \widetilde I_k\right)$;
\item[(b)] $\Upsilon$ has continuous traces on $\bigcup_k\left(I_k\cup \widetilde I_k\right)$ that satisfy the jump relations described by Figure~\ref{fig:5};
\item[(c)] each entry of $\Upsilon$ has a finite nonzero limit at the origin from within each sector;
\item[(d)] the behavior of $\Upsilon$ near $\infty$ is governed by \eqref{upsinfty}.
\end{itemize}

Finally, if we set $\widetilde \Upsilon := \sigma_3\Upsilon\sigma_3$. It can be readily checked that this matrix function satisfies \rhups~ with the orientations of the rays $I_k$ and $\widetilde I_k$ reversed.

\subsubsection{Solution of \rhpb}
With the notation introduced above, the following lemma holds.
\begin{lemma}
\label{lem:rhpet}
For $n\in\N_\varepsilon$, a solution of \rhpb~ is given by
\begin{equation}
\label{eq:pet}
\mathcal{P}_b = 2\sqrt\pi e^\frac{\pi i}{6}\mathcal{E}\Upsilon\mathcal{W}^{-1},  \quad \zeta= (3n/2)^{2/3}G_b^{2/3}, 
\end{equation}
if the arcs $\Delta_{b,k}$ are oriented towards $b$ and with $\Upsilon$ replaced by $\widetilde\Upsilon$ otherwise.
\end{lemma}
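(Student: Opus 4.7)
The plan mirrors the proof of Lemma~\ref{lem:rhp}, with the local Bessel model replaced by the Airy-type model $\Upsilon$ that was tailored to the trivalent-end jump geometry. Assume the arcs $\Delta_{b,k}$ are oriented towards $b$, so the conformal change of variable $\zeta = (3n/2)^{2/3}G_b^{2/3}$ preserves the orientation of $\Delta_{b,k}\mapsto I_k$ and $\widetilde{\Delta}_{b,k}\mapsto \widetilde{I}_k$ guaranteed by \eqref{ik} and \eqref{tildeik}; the reversed-orientation case is handled analogously by replacing $\Upsilon$ with $\widetilde\Upsilon$. Since $\mathcal{E}$ is analytic throughout $U_{b,\delta}$ by the construction carried out before the statement, the jumps of $\mathcal{P}_b$ are entirely produced by $\Upsilon(\zeta)\mathcal{W}^{-1}$. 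Off the union $\bigcup_k(\Delta_{b,k}\cup\widetilde{\Delta}_{b,k})$ both factors are holomorphic, verifying \rhpb(a).

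To check \rhpb(b), I would compare the jump relations of \rhups(b) (Figure~\ref{fig:5}), conjugated by the diagonal factor $\mathcal{W}$, with the prescribed jumps \rhs(b2) on $\Delta_{b,k}\cap U_{b,\delta}$ and \rhs(b4) on $\widetilde{\Delta}_{b,k}\cap U_{b,\delta}$. On each $\widetilde{\Delta}_{b,k}$ the diagonal factor $\mathcal{W}$ has no jump and the relevant jump of $\Upsilon$ is the lower-triangular matrix $\bigl(\begin{smallmatrix}1&0\\1&1\end{smallmatrix}\bigr)$, which after conjugation produces exactly the factor $\map^{-2n}(1/\rho_{b,k-1}+1/\rho_{b,k+1})$ in the lower-left entry once one substitutes the definitions of $\mathcal{W}$ on the two adjacent sectors and uses $|\map^n b^{-G_b}|\equiv 1$ from \eqref{gemapsectors}. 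On each $\Delta_{b,k}$, the anti-diagonal jump of $\Upsilon$ combines with the switch between sectors of $\mathcal{W}$ (built into the definition of $\mathcal{E^*}$ via the permutation matrix on $S_{b,2}$) to yield exactly the anti-diagonal matrix appearing in \rhs(b2); the factors $e^{\pm 2\pi i n\delta_k}$ arising from $\rho$-values versus $\map$-values are absorbed by the fact that $\delta_k=0$ for those $\Delta_{b,k}$ that are not $\mathbf{b}$-cycles (and the $\mathbf{b}$-cycle one is handled by the $\mathcal{N}$-side of the identity). This is the main bookkeeping step and the principal place where mistakes could occur.

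Condition \rhpb(c) is immediate: $\Upsilon$ has finite limits at $0$ by \rhups(c), $\mathcal{E}$ is analytic at $b$, and the diagonal factor $\mathcal{W}$ is bounded away from $0$ and $\infty$ in each sector since $\rho_{b,k}(b)\neq 0$. For the matching \rhpb(d), I would substitute the large-$\zeta$ asymptotics \eqref{upsinfty} of $\Upsilon$ into \eqref{eq:pet}. The constant prefactor $2\sqrt{\pi}e^{\pi i/6}$ is chosen precisely to cancel the $e^{-\pi i/6}/(2\sqrt\pi)$ and the constant matrix $\bigl(\begin{smallmatrix}1&i\\-1&i\end{smallmatrix}\bigr)$ appearing in \eqref{upsinfty}, after which the factor $\zeta^{-\sigma_3/4}\exp\{-\tfrac{2}{3}\zeta^{3/2}\sigma_3\}$ rewrites, via $\zeta^{3/2}=(3n/2)G_b$ and $\zeta^{1/4}=(3n/2)^{1/6}G_b^{1/6}$, as $(3n/2)^{-\sigma_3/6}G_b^{-\sigma_3/6}\exp\{-nG_b\sigma_3\}$. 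Matching this against the definition of $\mathcal{E}$ gives
\[
\mathcal{P}_b\mathcal{N}^{-1} - \mathcal{I} \;=\; \mathcal{N}\mathcal{W}e^{-nG_b\sigma_3}\mathcal{O}(1/n)\,e^{nG_b\sigma_3}\mathcal{W}^{-1}\mathcal{N}^{-1} \;=\; \mathcal{N}\,\mathcal{O}(1/n)\,\mathcal{N}^{-1}
\]
on $\partial U_{b,\delta}$, where the middle equality uses \eqref{gemapsectors} together with the fact that $\mathcal{W}$ is diagonal (so it commutes with $e^{nG_b\sigma_3}$), and where the error is in fact of order $\zeta^{-3/2}=\mathcal{O}(1/n)$ rather than $\zeta^{-1/2}$ thanks to the sharper Airy expansion. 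Finally, expanding the triple matrix product and invoking the uniform bounds \eqref{SnNormalized1} on the entries of $\mathcal{N}$ for $n\in\N_\varepsilon$ (applied with any $\epsilon<\delta$) yields \rhpb(d). The reversed-orientation case with $\widetilde\Upsilon=\sigma_3\Upsilon\sigma_3$ is verified identically, the conjugation by $\sigma_3$ compensating for the sign flip of $G_b$ and for the corresponding sign changes in $\mathcal{E}$ and $\mathcal{W}$.
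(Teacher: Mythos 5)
Your proposal follows the paper's proof essentially verbatim in structure: analyticity of $\mathcal{E}$ reduces \rhpb(a)--(b) to the jumps of $\Upsilon\mathcal{W}^{-1}$, checked via \eqref{ik}--\eqref{tildeik}; boundedness of all factors gives \rhpb(c); and \rhpb(d) follows by inserting \eqref{upsinfty}, whose leading constants are cancelled by the prefactor $2\sqrt\pi e^{\pi i/6}$ and the constant matrix built into $\mathcal{E}$, and then controlling the conjugation by $\mathcal{W}e^{\pm nG_b\sigma_3}$ via \eqref{gemapsectors} and the outer conjugation by $\mathcal{N}$ via \eqref{SnNormalized1}. So the approach is the same and the argument is sound in outline.

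Two details in your bookkeeping are off and should be repaired. First, in the jump check on $\Delta_{b,k}\cap U_{b,\delta}$ the phases $e^{\pm2\pi in\delta_k}$ of \rhs(b2) are not ``absorbed because $\delta_k=0$'' nor ``handled by the $\mathcal{N}$-side'': the matrix $\mathcal{N}$ plays no role in verifying \rhpb(b) (its jumps are already used up in making $\mathcal{E}$ analytic). They arise from the factor $\map^n$ inside $\mathcal{W}$, since the jump of $\mathcal{P}_b=\mathcal{E}\Upsilon\mathcal{W}^{-1}$ across $\Delta_{b,k}$ is $\mathcal{W}_-J_\Upsilon\mathcal{W}_+^{-1}$ and $\map^+\map^-=e^{2\pi i\delta_k}$ there by \eqref{jumponab}; similarly, on $\widetilde\Delta_{b,k}$ the matrix $\mathcal{W}$ does not jump at all (the arc lies inside a single sector $S_{b,k}$), and converting the resulting lower-left entry $-\map^{-2n}\rho_{b,k}/(\rho_{b,k-1}\rho_{b,k+1})$ into $\map^{-2n}(1/\rho_{b,k-1}+1/\rho_{b,k+1})$ requires the identity \eqref{rhoSum}, which you never invoke. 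Second, the matching on $\partial U_{b,\delta}$ cannot be written as a single formula: $\mathcal{E^*}$ carries the extra constant permutation factor $\left(\begin{smallmatrix}0&\pm1\\\mp1&0\end{smallmatrix}\right)$ on $S_{b,2}$ and the sign of $G_b$ alternates between sectors, so (as in the paper) one gets $\mathcal{NW}e^{nG_b\sigma_3}\mathcal{O}(1/n)e^{-nG_b\sigma_3}\mathcal{W}^{-1}\mathcal{N}^{-1}$ on $\partial U_{b,\delta}\cap\partial(S_{b,1}\cup S_{b,3})$ and the permutation-conjugated analogue on $\partial U_{b,\delta}\cap\partial S_{b,2}$. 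The conclusion survives because the constant permutation is harmless and in each sector the relevant diagonal factor $\mathcal{W}e^{\pm nG_b\sigma_3}$ has unimodular $\map^ne^{\pm nG_b}$ part by \eqref{gemapsectors}, but your single displayed identity with $e^{-nG_b\sigma_3}\cdots e^{nG_b\sigma_3}$ is not literally what the computation yields, and ``$\mathcal{W}$ commutes with $e^{nG_b\sigma_3}$'' is not the reason the conjugation is benign.
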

\begin{proof}
Assume that the arcs $\Delta_{b,k}$ are oriented towards $b$. As $\mathcal{E}$ is holomorphic in $U_{b,\delta}$, it can be readily verified using \eqref{ik} and \eqref{tildeik} that $\Upsilon\mathcal{W}^{-1}$ satisfies \rhpb(b). It is also evident that $\Upsilon\mathcal{W}^{-1}$ has no other jumps and therefore \rhpb(a) is fulfilled. Since all the matrices are bounded in the vicinity of $b$, so is \rhpb(c). Observe now that with $\zeta$ defined as in \eqref{eq:pet}, it holds by the definition of $\mathcal{E}$ and \eqref{upsinfty} that
\[
\mathcal{P}_b\mathcal{N}^{-1} - \mathcal{I} = \mathcal{NW}\exp\left\{nG_b\sigma_3\right\}\mathcal{O}\left(\frac1n\right)\exp\left\{-nG_b\sigma_3\right\}\mathcal{W}^{-1}\mathcal{N}^{-1}
\]
on $\partial U_{b,\delta}\cap\partial(S_{b,1}\cup S_{b,3})$ and
\[
\mathcal{P}_b\mathcal{N}^{-1} - \mathcal{I} = \mathcal{NW}\left(\begin{array}{rr}0 & 1 \smallskip \\-1 & 0\end{array}\right)\exp\left\{nG_b\sigma_3\right\}\mathcal{O}\left(\frac1n\right)\exp\left\{-nG_b\sigma_3\right\}\left(\begin{array}{rr}0 & -1 \smallskip \\1 & 0\end{array}\right)\mathcal{W}^{-1}\mathcal{N}^{-1}
\]
on $\partial U_{b,\delta}\cap\partial S_{b,2}$. As in the case of \rhpa(d), these representations yield \rhpb(d) on account of \eqref{gemapsectors} and \eqref{SnNormalized1} used with $\epsilon<\delta$. This finishes the proof of the lemma since the case where the arcs $\Delta_{b,k}$ are oriented away from $b$ can be examined analogously.
\end{proof}

\subsection{Final Transformation}

Denote by $\widetilde\Sigma$ the reduced system of contours that we define as
\[
\widetilde\Sigma := \left(\Sigma \setminus \left[\Delta\cup\bigcup_{e\in E} U_{e,\delta}\cup\bigcup_{k=1}^g\Delta_k^a\right]\right)\cup\bigcup_{e\in E} \partial U_{e,\delta}
\]
(see Figure~\ref{fig:6}).
\begin{figure}[!ht]
\centering
\includegraphics[scale=.75]{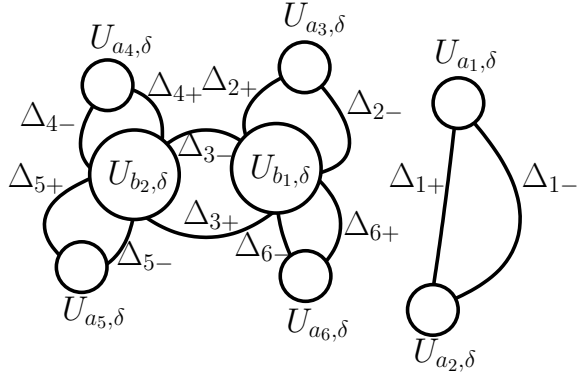}
\caption{\small Contour $\widetilde\Sigma$ for $\Sigma$ from Figure~\ref{fig:2}.}
\label{fig:6}
\end{figure}
For this new system we consider the following Riemann-Hilbert problem (\rhr):
\begin{itemize}
\item[(a)] $\mathcal{R}$ is a holomorphic matrix function in $\overline\C\setminus\widetilde\Sigma$ and $\mathcal{R}(\infty)=\mathcal{I}$;
\item[(b)] the traces of $\mathcal{R}$ on each side of $\widetilde\Sigma$ are continuous except for the branching points of $\widetilde\Sigma$, where they have definite limits from each sector and along each Jordan arc in $\widetilde\Sigma$. Moreover, they satisfy
\[
\mathcal{R}_+  = \mathcal{R}_- \left\{
\begin{array}{ll}
\mathcal{P}_e\mathcal{N}^{-1} & \mbox{on} \quad \partial U_{e,\delta} \quad \mbox{for each} \quad e\in E, \smallskip \\
\mathcal{N} \left(\begin{array}{cc} 1 & 0 \\ \map^{-2n}/\rho & 1 \end{array}\right) \mathcal{N}^{-1} & \mbox{on} \quad \widetilde\Sigma\setminus \bigcup_{e\in E}\partial U_{e,\delta}.
\end{array}
\right.
\]
\end{itemize}
Then the following lemma takes place.
\begin{lemma}
\label{lem:rhr}
The solution of \rhr~ exists for all $n\in\N_\varepsilon$ large enough and satisfies
\begin{equation}
\label{eq:r}
\mathcal{R}=\mathcal{I}+\mathcal{O}\left(1/n\right),
\end{equation}
where $\mathcal{O}(1/n)$ holds uniformly in $\overline\C$ (but not uniformly with respect to $\varepsilon$).
\end{lemma}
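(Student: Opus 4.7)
The plan is to carry out the standard small-norm Riemann--Hilbert argument (the ``R--analysis'' step of the Deift--Zhou scheme). The first task is to estimate the jump
\[
V_{\mathcal{R}} := \mathcal{R}_-^{-1}\mathcal{R}_+ - \mathcal{I}
\]
uniformly on $\widetilde\Sigma$. On each circle $\partial U_{e,\delta}$, the matching conditions \rhpa(d) and \rhpb(d) established in Lemmas~\ref{lem:rhp} and~\ref{lem:rhpet} give $\|V_{\mathcal{R}}\|_{L^\infty(\partial U_{e,\delta})} = \mathcal{O}(1/n)$ for $n\in\N_\varepsilon$. On the remaining components of $\widetilde\Sigma$ (portions of the outer lens arcs $\Delta_{k\pm}$ and $\widetilde\Delta_{e,j}$ lying outside $\bigcup_e U_{e,\delta}$), the construction of the lens in Section~\ref{ss:ol} and the maximum principle applied to $g_\Delta$ guarantee $|\map(z)|\geq 1+\eta$ for some $\eta=\eta(\delta)>0$, while $\rho$ is holomorphic and non-vanishing on these analytic arcs. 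Combined with uniform boundedness of the entries of $\mathcal{N}$ and $\mathcal{N}^{-1}$ on these arcs (which follows from \eqref{SnNormalized1} and $\det\mathcal{N}\equiv 1$, after verifying that $\widetilde\Sigma$ stays in $D_a$ away from the critical set $E$ and from the finite divisor points $\{\tr_{n,j}\}$ uniformly in $n\in\N_\varepsilon$), this yields $\|V_{\mathcal{R}}\|_{L^\infty} = \mathcal{O}(e^{-cn})$ on this portion for some $c>0$. Altogether, $\|V_{\mathcal{R}}\|_{L^\infty(\widetilde\Sigma)\cap L^2(\widetilde\Sigma)} = \mathcal{O}(1/n)$.

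Next, I would convert \rhr~ into the singular integral equation
\[
\mathcal{R}_-(z) \;=\; \mathcal{I} + \mathcal{C}^-\!\big((\mathcal{R}_--\mathcal{I})V_{\mathcal{R}}\big)(z) + \mathcal{C}^-(V_{\mathcal{R}})(z), \qquad z\in\widetilde\Sigma,
\]
where $\mathcal{C}^-$ denotes the non-tangential boundary value from the $(-)$ side of the Cauchy operator on $\widetilde\Sigma$. The contour $\widetilde\Sigma$ is a finite union of smooth arcs with a finite number of transverse self-intersections, so $\mathcal{C}^-$ is bounded on $L^2(\widetilde\Sigma)$ by classical results. Since $\|V_{\mathcal{R}}\|_{L^\infty} = \mathcal{O}(1/n)$, the operator $F\mapsto \mathcal{C}^-(F\,V_{\mathcal{R}})$ is a contraction on $L^2(\widetilde\Sigma)$ for all $n$ sufficiently large, and the Neumann series
\[
\mathcal{R}_- - \mathcal{I} \;=\; \sum_{k=0}^{\infty} \big(\mathcal{C}^-_{V_{\mathcal{R}}}\big)^{k}\,\mathcal{C}^-(V_{\mathcal{R}})
\]
converges, producing a solution $\mathcal{R}_-$ with $\|\mathcal{R}_- - \mathcal{I}\|_{L^2(\widetilde\Sigma)} = \mathcal{O}(1/n)$. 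Reconstructing $\mathcal{R}$ off the contour via
\[
\mathcal{R}(z) \;=\; \mathcal{I} + \frac{1}{2\pi i}\int_{\widetilde\Sigma}\frac{\mathcal{R}_-(\tau)V_{\mathcal{R}}(\tau)}{\tau-z}\,d\tau
\]
and applying Cauchy--Schwarz with $\|V_{\mathcal{R}}\|_{L^2}=\mathcal{O}(1/n)$ and $\|\mathcal{R}_-\|_{L^2}=\mathcal{O}(1)$ gives \eqref{eq:r} uniformly for $z$ at a positive distance from $\widetilde\Sigma$, and then on all of $\overline\C\setminus\widetilde\Sigma$ by a standard contour-deformation argument (the estimate on $\widetilde\Sigma$ itself following from the Sokhotski--Plemelj formulae combined with $\|V_{\mathcal{R}}\|_{L^\infty}=\mathcal{O}(1/n)$).

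The main obstacle, and the one requiring the most care, is establishing \emph{uniform} boundedness of $\mathcal{N}^{\pm 1}$ on $\widetilde\Sigma$ across the whole family $\{n\in\N_\varepsilon\}$. Generically, the zeros of $S_n$ (i.e.\ the $\tr_{n,j}\in D^{(0)}$) and of $S_{n-1}^*$ (i.e.\ the $\tr_{n-1,j}\in D^{(1)}$) are precisely what causes the denominator entries of $\mathcal{N}^{-1}$ to blow up. The point of Definition~\ref{def:2} is to exclude exactly the indices for which those divisor points escape to $\infty$ or migrate onto $\widetilde\Sigma$: by $(i)$--$(ii)$ in Definition~\ref{def:2}, together with Proposition~\ref{prop:sequence}, these projections stay in a fixed compact subset of $\C$ for $n\in\N_\varepsilon$, so the parameters $\delta$ (radii of the disks $U_{e,\delta}$) and the placement of the lens arcs can be chosen independently of $n$ so as to keep $\widetilde\Sigma$ at a positive distance from $\{\pi(\tr_{n,j})\}$. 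Then \eqref{SnNormalized1} (applied with $\epsilon<\delta$) delivers the required bound on $\mathcal{N}$, and inversion via $\det\mathcal{N}\equiv 1$ gives the same bound on $\mathcal{N}^{-1}$. The rest of the argument is purely functional-analytic and is standard.
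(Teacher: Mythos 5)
Your argument is correct and follows essentially the same small-norm route as the paper's proof: $\mathcal{O}(1/n)$ jumps on the circles $\partial U_{e,\delta}$ from the matching conditions \rhpa(d) and \rhpb(d), geometric decay of $\map^{-2n}$ on the remaining arcs (with the uniform boundedness of $\mathcal{N}^{\pm1}$ for $n\in\N_\varepsilon$ that you rightly flag, supplied by \eqref{SnNormalized1} and $\det\mathcal{N}\equiv1$), and a contour-deformation argument to push the estimate up to $\widetilde\Sigma$ itself. The only difference is presentational: where you write out the singular integral equation and the Neumann series, the paper simply invokes Corollary~7.108 of Deift's book, which encapsulates exactly that functional-analytic step.
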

\begin{proof}
By \rhpa(d) and \rhpb(d), we have that \rhr(b) can be written as
\begin{equation}
\label{eq:smalljump}
\mathcal{R}_+ = \mathcal{R_-}\left(\mathcal{I}+\mathcal{O}\left(1/n\right)\right)
\end{equation}
uniformly on $\bigcup_{e\in E}\partial U_{e,\delta}$. Further, as $\map^{-2n}$ converges to zero geometrically fast away from $\Delta$, the jump of $\mathcal{R}$ on $\widetilde\Sigma\setminus\bigcup_{e\in E}\partial U_{e,\delta}$ is geometrically uniformly close to $\mathcal{I}$. Hence, \eqref{eq:smalljump} holds uniformly on $\widetilde\Sigma$. Thus, by \cite[Corollary~7.108]{Deift}, \rhr~ is solvable for all $n$ large enough and $\mathcal{R}_\pm$ converge to zero on $\widetilde\Sigma$ in $L^2$-sense as fast as $1/n$. The latter yields \eqref{eq:r} locally uniformly in $\overline\C\setminus\widetilde\Sigma$. To show that \eqref{eq:r} holds at $z\in\widetilde\Sigma$, deform $\widetilde\Sigma$ to a  new contour that avoids $z$. As the jump in \rhr~ is given by analytic matrix functions, one can state an equivalent problem on this new contour, the solution to which is an analytic continuation of $\mathcal{R}$. However, now we have that \eqref{eq:r} holds locally around $z$. Compactness of $\widetilde\Sigma$ finishes the proof of \eqref{eq:r}.
\end{proof}

Now, it can be verified directly from Lemmas \ref{lem:rhn}, \ref{lem:rhp}, and \ref{lem:rhr} that the following lemma holds.
\begin{lemma}
\label{lem:rhar}
The solution of \rhs~  exists for all $n\in\N_\varepsilon$ large enough and is given by
\begin{equation}
\label{eq:exps}
\mathcal{S} := \left\{
\begin{array}{ll}
\mathcal{R}\mathcal{N}, & \mbox{in} \quad \overline\C\setminus\left(\widetilde\Sigma\cup\bigcup_{e\in E} U_{e,\delta}\right), \bigskip \\
\mathcal{R}\mathcal{P}_e, & \mbox{in} \quad  U_{e,\delta} \quad \mbox{for each} \quad e\in E,
\end{array}
\right.
\end{equation}
where $\mathcal{R}$ is the solution of \rhr.
\end{lemma}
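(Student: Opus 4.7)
The plan is to verify directly that the piecewise matrix $\mathcal{S}$ defined by \eqref{eq:exps} satisfies RHP-$\mathcal{S}$(a)--(c), and then invoke the uniqueness part of Lemma~\ref{lem:rhs} (together with Lemma~\ref{lem:rht} and Lemma~\ref{lem:rhy}) to conclude existence. Lemma~\ref{lem:rhr} guarantees that $\mathcal{R}$ exists for all $n\in\N_\varepsilon$ large enough, so the formula \eqref{eq:exps} makes sense for such indices.

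First I would check analyticity. Outside the disks $U_{e,\delta}$, the matrix $\mathcal{N}$ is analytic away from $\bigcup\Delta_k\cup\bigcup\Delta_k^a$ by Lemma~\ref{lem:rhn}, while $\mathcal{R}$ is analytic away from $\widetilde\Sigma$ by \rhr(a); since the union of these singular sets within $\overline\C\setminus\bigcup_{e\in E}U_{e,\delta}$ is exactly $\Sigma\setminus\bigcup_{e\in E}U_{e,\delta}$, the first branch of \eqref{eq:exps} is analytic there. Inside each $U_{e,\delta}$, $\mathcal{R}$ is analytic away from $\partial U_{e,\delta}$ and $\mathcal{P}_e$ is analytic away from $\Sigma\cap U_{e,\delta}$ by \rhpa(a) and \rhpb(a). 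The crucial observation is that $\partial U_{e,\delta}$ is \emph{not} a singular curve for $\mathcal{S}$: by \rhr(b) one has $\mathcal{R}_+=\mathcal{R}_-\mathcal{P}_e\mathcal{N}^{-1}$ on $\partial U_{e,\delta}$, so
\[
(\mathcal{R}\mathcal{P}_e)_+ = \mathcal{R}_-\mathcal{P}_e\mathcal{N}^{-1}\mathcal{P}_e = \mathcal{R}_-\mathcal{N} = (\mathcal{R}\mathcal{N})_-,
\]
where on $\partial U_{e,\delta}$ the matrices $\mathcal{P}_e$ and $\mathcal{N}$ have no jumps (their jump contours lie inside, respectively outside the disk). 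Hence the two branches of \eqref{eq:exps} paste into a function analytic across $\partial U_{e,\delta}$, so $\mathcal{S}$ is analytic in $\overline\C\setminus\Sigma$. The normalization $\mathcal{S}(\infty)=\mathcal{I}$ follows at once from $\mathcal{R}(\infty)=\mathcal{I}$ and $\mathcal{N}(\infty)=\mathcal{I}$.

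Next I would verify the jumps on $\Sigma$. Split $\Sigma$ into its four portions appearing in \rhs(b)(1)--(4). On $\bigcup\Delta_k^a$ and on $\bigcup\Delta_k$, which lie outside $\widetilde\Sigma$, the matrix $\mathcal{R}$ is analytic, so $\mathcal{S}_\pm=\mathcal{R}\mathcal{N}_\pm$ and the required jumps \rhs(b1) and \rhs(b2) follow directly from the corresponding jumps of $\mathcal{N}$ established in the proof of Lemma~\ref{lem:rhn}. On $\Delta_{k\pm}\setminus\bigcup_{e}U_{e,\delta}$, $\mathcal{N}$ is analytic and $\mathcal{R}_+=\mathcal{R}_-\mathcal{N}\bigl(\begin{smallmatrix}1 & 0 \\ \map^{-2n}/\rho & 1\end{smallmatrix}\bigr)\mathcal{N}^{-1}$, which gives exactly \rhs(b3); the analogous computation on $\widetilde\Delta_{e,j}\setminus\bigcup_{e}U_{e,\delta}$ yields \rhs(b4). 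Inside each $U_{e,\delta}$, $\mathcal{R}$ is analytic across $\Sigma\cap U_{e,\delta}$, so $\mathcal{S}_\pm=\mathcal{R}(\mathcal{P}_e)_\pm$, and the jumps \rhs(b2)--(b4) are inherited from \rhpa(b) and \rhpb(b).

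Finally, for the endpoint behavior \rhs(c): near a univalent end $a\in A$ the matrix $\mathcal{R}$ is bounded with definite limits, so the prescribed behavior of $\mathcal{S}=\mathcal{R}\mathcal{P}_a$ near $a$ is the one of $\mathcal{P}_a$, which satisfies \rhpa(c) $=$ \rhs(c) by Lemma~\ref{lem:rhp}; near a trivalent end $b$ the matrix $\mathcal{P}_b$ is bounded by \rhpb(c), and since $b\notin A$ this is precisely \rhs(c). Thus $\mathcal{S}$ solves \rhs, and uniqueness (Lemma~\ref{lem:rhs}) gives the formula \eqref{eq:exps}. The main point of this verification — and essentially the only place where the argument is not bookkeeping — is the matching identity $(\mathcal{R}\mathcal{P}_e)_+=(\mathcal{R}\mathcal{N})_-$ across $\partial U_{e,\delta}$, which is the very reason why the matching condition \rhpa(d), \rhpb(d) was imposed in the first place.
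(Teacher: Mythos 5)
Your overall route is the same as the paper's: the paper offers no argument beyond "verified directly from Lemmas~\ref{lem:rhn}, \ref{lem:rhp}, and \ref{lem:rhr}", and your plan (check \rhs(a)--(c) for the piecewise formula, then use uniqueness from Lemma~\ref{lem:rhs}) is exactly that direct verification, with the jump checks on the various pieces of $\Sigma$ and the endpoint behavior handled correctly.

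However, the one step you yourself single out as the heart of the matter is wrong as written. From $\mathcal{R}_+=\mathcal{R}_-\mathcal{P}_e\mathcal{N}^{-1}$ on $\partial U_{e,\delta}$ you get $(\mathcal{R}\mathcal{P}_e)_+=\mathcal{R}_+\mathcal{P}_e=\mathcal{R}_-\mathcal{P}_e\mathcal{N}^{-1}\mathcal{P}_e$, and your next equality silently asserts $\mathcal{P}_e\mathcal{N}^{-1}\mathcal{P}_e=\mathcal{N}$, i.e.\ $\left(\mathcal{N}^{-1}\mathcal{P}_e\right)^2=\mathcal{I}$, which is false in general: these matrices do not commute. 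The matching only works with the correct attribution of sides: the interior of $U_{e,\delta}$ (where $\mathcal{S}=\mathcal{R}\mathcal{P}_e$) must lie on the $-$ side of $\partial U_{e,\delta}$ and the exterior (where $\mathcal{S}=\mathcal{R}\mathcal{N}$) on the $+$ side, so that
\begin{equation*}
(\mathcal{R}\mathcal{N})_+=\mathcal{R}_+\mathcal{N}=\mathcal{R}_-\mathcal{P}_e\mathcal{N}^{-1}\mathcal{N}=\mathcal{R}_-\mathcal{P}_e=(\mathcal{R}\mathcal{P}_e)_-,
\end{equation*}
which gives continuity of $\mathcal{S}$ across the circle and hence analyticity there; with the opposite orientation of $\partial U_{e,\delta}$ the jump matrix in \rhr(b) would have to be $\mathcal{N}\mathcal{P}_e^{-1}$ instead of $\mathcal{P}_e\mathcal{N}^{-1}$. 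This is a fixable bookkeeping slip rather than a flaw in the method, but since it is the only non-trivial identity in the proof, it needs to be stated correctly (and the orientation convention for the circles made explicit) for the argument to stand.
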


It is an immediate consequence of Lemmas \ref{lem:rht}, \ref{lem:rhs}, and \ref{lem:rhar} that the following result holds.

\begin{lemma}
\label{lem:solutionY}
If Condition~\hyperref[cond1]{GP} is fulfilled, then the solution of \rhy~ uniquely exists for all $n\in\N_\varepsilon$ large enough and can be expressed by reversing the transformations $\mathcal{Y}\to\mathcal{T}\to\mathcal{S}$ using \eqref{eq:t} and \eqref{eq:s} with $\mathcal{S}$ given by \eqref{eq:exps}.
\end{lemma}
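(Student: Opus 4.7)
The proof is essentially a bookkeeping argument that chains together the preceding lemmas, which have already done all the hard work. The plan is as follows.

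First, I would invoke Lemma~\ref{lem:rhar}, which asserts that for all sufficiently large $n\in\N_\varepsilon$ the matrix $\mathcal{S}$ defined by \eqref{eq:exps} solves \rhs. This is the substantive input: it combines the global parametrix $\mathcal{N}$ (Lemma~\ref{lem:rhn}), the local parametrices $\mathcal{P}_a$ and $\mathcal{P}_b$ near univalent and trivalent ends (Lemmas~\ref{lem:rhp} and~\ref{lem:rhpet}), and the small-norm estimate $\mathcal{R}=\mathcal{I}+\mathcal{O}(1/n)$ for the residual problem (Lemma~\ref{lem:rhr}). Condition~\hyperref[cond1]{GP} is needed here because it ensures that the branch points split into univalent ends (handled by Bessel parametrices) and trivalent ends (handled by Airy parametrices), with no higher-order coalescence that would fall outside the scope of \rhpa~ and \rhpb.

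Next, I would apply the equivalence statements in reverse. Lemma~\ref{lem:rhs} states that \rhs~ is solvable if and only if \rht~ is solvable, and that the unique solutions are related by \eqref{eq:s}; hence, inverting \eqref{eq:s} on each domain $\Omega_{k\pm}$ produces a solution of \rht. Then Lemma~\ref{lem:rht} gives the analogous equivalence between \rht~ and \rhy~ via \eqref{eq:t}; multiplying the resulting $\mathcal{T}$ by $\gamma_\Delta^{n\sigma_3}$ on the left and $\map^{n\sigma_3}$ on the right yields a solution $\mathcal{Y}$ of \rhy.

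For uniqueness, Lemma~\ref{lem:rhy} asserts that whenever a solution of \rhy~ exists it is unique (and equal to \eqref{eq:y}), so the construction produces the unique solution; in particular, $q_n$ must be a normal index with $\deg q_n=n$ and $R_{n-1}(z)\sim z^{-n}$ at infinity. Since all the ingredients are already assembled, the only thing that really needs to be checked is that no logical circularity has crept in: the construction of $\mathcal{N}$ in Lemma~\ref{lem:rhn} and the parametrices $\mathcal{P}_e$ require only $n\in\N_\varepsilon$, while the small-norm analysis in Lemma~\ref{lem:rhr} requires $n$ sufficiently large, so the combined hypothesis ``$n\in\N_\varepsilon$ large enough'' is consistent. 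The main (and only) subtlety is simply recording that the chain of invertible transformations $\mathcal{Y}\leftrightarrow\mathcal{T}\leftrightarrow\mathcal{S}$ is genuinely biconditional, which is exactly what Lemmas~\ref{lem:rht} and~\ref{lem:rhs} provide; consequently the proof amounts to a single sentence citing these three lemmas.
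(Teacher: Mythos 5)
Your proposal is correct and follows exactly the paper's route: the paper proves this lemma by simply chaining Lemmas~\ref{lem:rht}, \ref{lem:rhs}, and \ref{lem:rhar}, i.e., existence of $\mathcal{S}$ via \eqref{eq:exps} for large $n\in\N_\varepsilon$ combined with the biconditional equivalences \rhs$\leftrightarrow$\rht$\leftrightarrow$\rhy~ through \eqref{eq:s} and \eqref{eq:t}, with uniqueness supplied by those same equivalence lemmas (and Lemma~\ref{lem:rhy}). Your additional remarks about Condition~\hyperref[cond1]{GP} and the consistency of the hypotheses are accurate glosses on the same argument, not a different approach.
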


\section{Asymptotics of Nuttall-Stahl Polynomials}
\label{s:7}

\subsection{Proof of Theorem~\ref{thm:SA}}
Assume that $n\in\N_\varepsilon$. For any given closed set in $D^*$, it can be easily arranged that this set lies exterior to the lens $\widetilde\Sigma$. Thus, the matrix $\mathcal{Y}$ on this closed set is given by
\[
\mathcal{Y} = \gamma_\Delta^{n\sigma_3}\mathcal{RN}\map^{n\sigma_3},
\]
where $\mathcal{R}$ is the solution of \rhr~ given by Lemma \ref{lem:rhar} and $\mathcal{N}$ is the solution of \rhn~ given by \eqref{eq:n}. Then
\begin{equation}
\label{eq:rnd}
\mathcal{RN} = \left(
\begin{array}{cc}
\displaystyle\left(1+\upsilon_{n1}\right)\frac{S_n}{S_n(\infty)}+\upsilon_{n2}\frac{\gamma_\Delta S_{n-1}}{\map S_{n-1}^*(\infty)} & \displaystyle \left(1+\upsilon_{n1}\right)\frac{hS_n^*}{S_n(\infty)}+\upsilon_{n2}\frac{\gamma_\Delta h\map S_{n-1}^*}{S_{n-1}^*(\infty)} \bigskip \\
\displaystyle \upsilon_{n3}\frac{S_n}{S_n(\infty)}+\left(1+\upsilon_{n4}\right)\frac{\gamma_\Delta S_{n-1}}{\map S_{n-1}^*(\infty)} & \displaystyle \upsilon_{n3}\frac{hS_n^*}{S_n(\infty)}+\left(1+\upsilon_{n4}\right)\frac{\gamma_\Delta h\map S_{n-1}^*}{S_{n-1}^*(\infty)}
\end{array}
\right)
\end{equation}
with $|\upsilon_{nk}|\leq c(\varepsilon)/n$ uniformly in $\overline\C$ by \eqref{eq:r}; and therefore \eqref{SA1} follow from \eqref{eq:y}.

To derive asymptotic behavior of $q_n$ and $R_n$ on $\Delta \setminus E$, we need to consider what happens within the lens $\Sigma$ and outside the disks $U_{e,\delta}$. We shall consider the asymptotics of $\mathcal{Y}$ from within $\bigcup_k\Omega_{k+}$, the ``upper'' part of the lens $\Sigma$, the behavior of $\mathcal{Y}$ in $\bigcup_k\Omega_{k-}$ can be derived in a similar fashion. We deduce from Lemma \ref{lem:solutionY} that
\[
\mathcal{Y}_+ = \gamma_\Delta^{n\sigma_3}\mathcal{(RN)}_+\left(\begin{array}{cc} 1 & 0 \\ (\map^+)^{-2n}/\rho & 1 \end{array}\right)(\map^+)^{n\sigma_3} = \gamma_\Delta^{n\sigma_3}\mathcal{(RN)}_+\left(\begin{array}{cc} (\map^+)^n  & 0 \smallskip \\ (\map^+)^{-n}/\rho & (\map^+)^{-n}  \end{array}\right)
\]
locally uniformly on $\Delta\setminus E$. Therefore, it holds that
\[
\left\{
\begin{array}{lcl}
\mathcal{Y}_{11} & = &  \displaystyle \mathcal{(RN)}_{11}^+(\gamma_\Delta\map^+)^n + \mathcal{(RN)}_{12}^+\gamma_\Delta^n(\map^+)^{-n}/\rho,  \bigskip \\
\mathcal{Y}_{12}^+ & =& \displaystyle \mathcal{(RN)}_{12}^+(\gamma_\Delta/\map^+)^n.
\end{array}
\right.
\]
Hence, we get \eqref{SA2} from \eqref{eq:rnd}, \eqref{eq:y}, and \eqref{jumpSn}.
\qed

\subsection{Proof of Corollary~\ref{cor:SA}}
Since $\widehat\rho-[n/n]_{\widehat\rho}=R_n/q_n$, it follows from \eqref{SA1} that
\[
\widehat\rho-[n/n]_{\widehat\rho} = \frac{S_n^*}{S_n}\frac{h}{\map^{2n}}\frac{1+\upsilon_{n1}+(\upsilon_{n2}\map)(\gamma_n^*/\gamma_n)(S_{n-1}^*/S_n^*)}{1+\upsilon_{n1}+(\upsilon_{n2}/\map)(\gamma_n^*/\gamma_n)(S_{n-1}/S_n)}.
\]
Since $\upsilon_{n2}$ is vanishing at infinity, it follows from \eqref{eq:r} that $|\upsilon_{n2}\map|,|\upsilon_{n2}/\map| \leq c(\varepsilon)/n$ uniformly in $D^*$. Thus, \eqref{errorasyspun} is the consequence of \eqref{SnNormalized}.
\qed

\bibliographystyle{plain}
\bibliography{../bibliography}

\begin{thebibliography}{10}

\bibitem{AbramowitzStegun}
M.~Abramowitz and I.A. Stegun.
\newblock {\em Handbook of Mathematical Functions}.
\newblock Dover Publications, Inc., New York, 1968.

\bibitem{Akhiezer}
N.I. Akhiezer.
\newblock {\em Elements of the Theory of Elliptic Functions}.
\newblock Amer. Math. Soc., Providence, RI, 1990.

\bibitem{Ap02}
A.I. Aptekarev.
\newblock Sharp constant for rational approximation of analytic functions.
\newblock {\em Mat. Sb.}, 193(1):1--72, 2002.
\newblock English transl. in {\it {M}ath. {S}b.} 193(1-2):1--72, 2002.

\bibitem{Ap08}
A.I. Aptekarev.
\newblock Analysis of the matrix {R}iemann-{H}ilbert problems for the case of
  higher genus and asymptotics of polynomials orthogonal on a system of
  intervals.
\newblock Preprints of Keldysh Institute of Applied Mathematics, RAS, 2008.
\newblock \url{http://www.keldysh.ru/papers/2008/source/prep2008\_28_eng.pdf}.

\bibitem{ApVA04}
A.I. Aptekarev and W.~Van Assche.
\newblock Scalar and matrix {R}iemann-{H}ilbert approach to the strong
  asymptotics of {P}ad\'e approximants and complex orthogonal polynomials with
  varying weight.
\newblock {\em J. Approx. Theory}, 129:129--166, 2004.

\bibitem{ApLy10}
A.I. Aptekarev and V.G. Lysov.
\newblock Asymptotics of {H}ermite-{P}ad\'e approximants for systems of
  {M}arkov functions generated by cyclic graphs.
\newblock {\em Mat. Sb.}, 201(2):29--78, 2010.

\bibitem{BaikDMLMilZ01}
J.~Baik, P.~Deift, K.T.-R. McLaughlin, P.~Miller, and X.~Zhou.
\newblock Optimal tail estimates for directed last passage site percolation
  with geometric random variables.
\newblock {\em Adv. Theor. Math. Phys.}, 5(6):1207--1250, 2001.

\bibitem{BakerGravesMorris}
G.A. Baker and P.~Graves-Morris.
\newblock {\em Pad\'e Approximants}, volume~59 of {\em Encyclopedia of
  Mathematics and its Applications}.
\newblock Cambridge University Press, 1996.

\bibitem{BStY12}
L.~Baratchart, H.~Stahl, and M.~Yattselev.
\newblock Weighted extremal domains and best rational approximation.
\newblock {\em Adv. Math.}, 229:357--407, 2012.

\bibitem{uBY2}
L.~Baratchart and M.~Yattselev.
\newblock {P}ad\'e approximants to a certain elliptic-type functions.
\newblock {\it Accepted for publication in J. Math. Anal.}
  \url{http://arxiv.org/abs/1205.4480}.

\bibitem{BY09c}
L.~Baratchart and M.~Yattselev.
\newblock Convergent interpolation to {C}auchy integrals over analytic arcs.
\newblock {\em Found. Comput. Math.}, 9(6):675--715, 2009.

\bibitem{BY10}
L.~Baratchart and M.~Yattselev.
\newblock Convergent interpolation to {C}auchy integrals over analytic arcs
  with {J}acobi-type weights.
\newblock {\em Int. Math. Res. Not.}, 2010.
\newblock Art. ID rnq 026, pp. 65.

\bibitem{BerMo09}
M.~Bertola and M.Y. Mo.
\newblock Commuting difference operators, spinor bundles and the asymptotics of
  orthogonal polynomials with respect to varying complex weights.
\newblock {\em Adv. Math.}, 220:154--218, 2009.

\bibitem{Deift}
P.~Deift.
\newblock {\em Orthogonal Polynomials and Random Matrices: a Riemann-Hilbert
  Approach}, volume~3 of {\em Courant Lectures in Mathematics}.
\newblock Amer. Math. Soc., Providence, RI, 2000.

\bibitem{DKMLVZ99b}
P.~Deift, T.~Kriecherbauer, K.T.-R. McLaughlin, S.~Venakides, and X.~Zhou.
\newblock Strong asymptotics for polynomials orthogonal with respect to varying
  exponential weights.
\newblock {\em Comm. Pure Appl. Math.}, 52(12):1491--1552, 1999.

\bibitem{DKMLVZ99a}
P.~Deift, T.~Kriecherbauer, K.T.-R. McLaughlin, S.~Venakides, and X.~Zhou.
\newblock Uniform asymptotics for polynomials orthogonal with respect to
  varying exponential weights applications to universality questions in random
  matrix theory.
\newblock {\em Comm. Pure Appl. Math.}, 52(11):1335--1426, 1999.

\bibitem{DZ93}
P.~Deift and X.~Zhou.
\newblock A steepest descent method for oscillatory {R}iemann-{H}ilbert
  problems. {A}symptotics for the m{K}d{V} equation.
\newblock {\em Ann. of Math.}, 137:295--370, 1993.

\bibitem{Dieudonne}
J.~Dieudonn\'e.
\newblock {\em Foundations of Modern Analysis}.
\newblock Academic Press, New York, 1969.

\bibitem{uDum}
S.~Dumas.
\newblock Sur le d\'eveleppement des fonctions elliptiques en fractions
  continues.
\newblock {T}hesis, {Z}\"urich, 1908.

\bibitem{M-FRakh11}
A.~Mart\'inez Finkelshtein and E.A. Rakhmanov.
\newblock Critical measures, quadratic differentials, and weak limits of zeros
  of {S}tieltjes polynomials.
\newblock {\em Comm. Math. Physics}, 302:53--111, 2011.
\newblock \url{http://arxiv.org/abs/0902.0193}.

\bibitem{M-FRakhSuet12}
A.~Mart\'inez Finkelstein, E.A. Rakhmanov, and S.P. Suetin.
\newblock {H}eine, {H}ilbert, {P}ad\'e, {R}iemann, and {S}tieljes: a {J}ohn
  {N}uttall's work 25 years later.
\newblock \url{http://arxiv.org/abs/1111.6139}.

\bibitem{FIK91}
A.S. Fokas, A.R. Its, and A.V. Kitaev.
\newblock Discrete {P}anlev\'e equations and their appearance in quantum
  gravity.
\newblock {\em Comm. Math. Phys.}, 142(2):313--344, 1991.

\bibitem{FIK92}
A.S. Fokas, A.R. Its, and A.V. Kitaev.
\newblock The isomonodromy approach to matrix models in {2D} quantum
  gravitation.
\newblock {\em Comm. Math. Phys.}, 147(2):395--430, 1992.

\bibitem{Gakhov}
F.D. Gakhov.
\newblock {\em Boundary Value Problems}.
\newblock Dover Publications, Inc., New York, 1990.

\bibitem{Goluzin}
G.M. Goluzin.
\newblock {\em Geometric Theory of Functions of a Complex Variable}, volume~26
  of {\em Translations of Math. Monographs}.
\newblock Amer. Math. Soc., Providence, RI, 1969.

\bibitem{Gon78}
A.A. Gonchar.
\newblock On the speed of rational approximation of some analytic functions.
\newblock {\em Mat. Sb.}, 105(147)(2):147--163, 1978.
\newblock English transl. in {\it {M}ath. {USSR} {S}b.} 34(3), 1978.

\bibitem{GL78}
A.A. Gonchar and G.~L\'opez Lagomasino.
\newblock On {M}arkov's theorem for multipoint {P}ad\'e approximants.
\newblock {\em Mat. Sb.}, 105(4):512--524, 1978.
\newblock English transl. in {\it{M}ath. {USSR} {S}b.} 34(4):449--459, 1978.

\bibitem{GRakh87}
A.A. Gonchar and E.A. Rakhmanov.
\newblock Equilibrium distributions and the degree of rational approximation of
  analytic functions.
\newblock {\em Mat. Sb.}, 134(176)(3):306--352, 1987.
\newblock English transl. in {\it {M}ath. {USSR} Sbornik} 62(2):305--348, 1989.

\bibitem{KamvissisMcLaughlinMiller}
S.~Kamvissis, K.T.-R. McLaughlin, and P.~Miller.
\newblock {\em Semiclassical soliton ensembles for the focusing nonlinear
  Schr{\"o}dinger equation}, volume 154 of {\em Annals of Mathematics Studies}.
\newblock Princeton University Press, 2003.

\bibitem{KamRakh05}
S.~Kamvissis and E.A. Rakhmanov.
\newblock Existence and regularity for an energy maximazation problem in two
  dimensions.
\newblock {\em J. Math. Phys.}, 46(8), 083505, 24pp, 2005.

\bibitem{KMcL99}
T.~Kriecherbauer and K.T.-R. McLaughlin.
\newblock Strong asymptotics of polynomials orthogonal with respect to {F}reud
  weights.
\newblock {\em Internat. Math. Res. Notices}, 6:299--333, 1999.

\bibitem{KMcLVAV04}
A.B. Kuijlaars, K.T.-R. McLaughlin, W.~Van Assche, and M.~Vanlessen.
\newblock The {R}iemann-{H}ilbert approach to strong asymptotics for orthogonal
  polynomials on $[-1,1]$.
\newblock {\em Adv. Math.}, 188(2):337--398, 2004.

\bibitem{Nik76}
E.M. Nikishin.
\newblock On the convergence of diagonal {P}ad\'e approximants to certain
  functions.
\newblock {\em Math. USSR Sb.}, 30(2):249--260, 1976.

\bibitem{NikishinSorokin}
E.M. Nikishin and V.N. Sorokin.
\newblock {\em Rational Approximation and Orthogonality}, volume~92 of {\em
  Translations of Math. Monographs}.
\newblock Amer. Math. Soc., Providence, RI, 1991.

\bibitem{Nut77}
J.~Nuttall.
\newblock On convergence of {P}ad\'e approximants to functions with branch
  points.
\newblock In E.B. Saff and R.S. Varga, editors, {\em Pad\'e and Rational
  Approximation}, pages 101--109, New York, 1977. Academic Press.

\bibitem{Nut80}
J.~Nuttall.
\newblock Sets of minimum capacity, {P}ad\'e approximants and the bubble
  problem.
\newblock In C.~Bardos and D.~Bessis, editors, {\em Bifurcation Phenomena in
  Mathematical Physics and Related Topics}, pages 185--201, Dordrecht, 1980.
  Reidel.

\bibitem{Nut82}
J.~Nuttall.
\newblock Note on generalized {J}acobi polynomials.
\newblock In {\em The Riemann Problem, Complete Integrability and Arithmetic
  Applications}, volume 925 of {\em Lecture Notes in Mathematics}, pages
  258--270, Berlin, 1982. Springer-Verlag.

\bibitem{Nut84}
J.~Nuttall.
\newblock Asymptotics of diagonal {H}ermite-{P}ad\'e polynomials.
\newblock {\em J. Approx. Theory}, 42(4):299--386, 1984.

\bibitem{Nut86}
J.~Nuttall.
\newblock Asymptotics of generalized {J}acobi polynomials.
\newblock {\em Constr. Approx.}, 2(1):59--77, 1986.

\bibitem{Nut90}
J.~Nuttall.
\newblock Pad\'e polynomial asymptotic from a singular integral equation.
\newblock {\em Constr. Approx.}, 6(2):157--166, 1990.

\bibitem{NutS80}
J.~Nuttall and S.R. Singh.
\newblock Orthogonal polynomials and {P}ad\'e approximants associated with a
  system of arcs.
\newblock {\em J. Approx. Theory}, 21:1--42, 1980.

\bibitem{Pade92}
H.~Pad\'e.
\newblock Sur la repr\'esentation approch\'ee d'une fonction par des fractions
  rationnelles.
\newblock {\em Ann. Sci Ecole Norm. Sup.}, 9(3):3--93, 1892.

\bibitem{uPerevRakh}
E.A. Perevoznikova and E.A. Rakhmanov.
\newblock Variation of the equilibrium energy and {S}-property of compacta of
  minimal capacity.
\newblock Manuscript, 1994.

\bibitem{Pommerenke2}
Ch. Pommerenke.
\newblock {\em Univalent Functions}.
\newblock Vandenhoeck \& Ruprecht, G\"ottingen, 1975.

\bibitem{Ransford}
T.~Ransford.
\newblock {\em Potential Theory in the Complex Plane}, volume~28 of {\em London
  Mathematical Society Student Texts}.
\newblock Cambridge University Press, Cambridge, 1995.

\bibitem{SaffTotik}
E.B. Saff and V.~Totik.
\newblock {\em Logarithmic Potentials with External Fields}, volume 316 of {\em
  Grundlehren der Math. Wissenschaften}.
\newblock Springer-Verlag, Berlin, 1997.

\bibitem{St85}
H.~Stahl.
\newblock Extremal domains associated with an analytic function. {I, II}.
\newblock {\em Complex Variables Theory Appl.}, 4:311--324, 325--338, 1985.

\bibitem{St85b}
H.~Stahl.
\newblock Structure of extremal domains associated with an analytic function.
\newblock {\em Complex Variables Theory Appl.}, 4:339--356, 1985.

\bibitem{St86}
H.~Stahl.
\newblock Orthogonal polynomials with complex valued weight function. {I, II}.
\newblock {\em Constr. Approx.}, 2(3):225--240, 241--251, 1986.

\bibitem{St89}
H.~Stahl.
\newblock On the convergence of generalized {P}ad\'e approximants.
\newblock {\em Constr. Approx.}, 5(2):221--240, 1989.

\bibitem{St96}
H.~Stahl.
\newblock Diagonal {P}ad\'e approximants to hyperelliptic functions.
\newblock {\em Ann. Fac. Sci. Toulouse Math.}, 6(special issue):121--193, 1996.

\bibitem{St97}
H.~Stahl.
\newblock The convergence of {P}ad\'e approximants to functions with branch
  points.
\newblock {\em J. Approx. Theory}, 91:139--204, 1997.

\bibitem{Suet00}
S.P. Suetin.
\newblock Uniform convergence of {P}ad\'e diagonal approximants for
  hyperelliptic functions.
\newblock {\em Mat. Sb.}, 191(9):81--114, 2000.
\newblock English transl. in {\it {M}ath. {S}b.} 191(9):1339--1373, 2000.

\bibitem{Suet03}
S.P. Suetin.
\newblock Convergence of {C}hebysh\"ev continued fractions for elliptic
  functions.
\newblock {\em Mat. Sb.}, 194(12):63--92, 2003.
\newblock English transl. in {\it {M}ath. {S}b.} 194(12):1807--1835, 2003.

\bibitem{Szego}
G.~Szeg{\H{o}}.
\newblock {\em Orthogonal Polynomials}, volume~23 of {\em Colloquium
  Publications}.
\newblock Amer. Math. Soc., Providence, RI, 1999.

\end{thebibliography}

\end{document}